\providecommand{\U}[1]{\protect\rule{.1in}{.1in}}
\newtheorem{theorem}{Theorem}
\newtheorem{acknowledgement}[theorem]{Acknowledgement}
\newtheorem{claim}[theorem]{Claim}
\newtheorem{corollary}[theorem]{Corollary}
\newtheorem{definition}[theorem]{Definition}
\newtheorem{lemma}[theorem]{Lemma}
\newtheorem{problem}[theorem]{Problem}
\newtheorem{proposition}[theorem]{Proposition}
\newenvironment{proof}[1][Proof]{\noindent\textbf{#1.} }{\ \rule{0.5em}{0.5em}}
\begin{document}

\date{}
\title{Ultrafilters in the random real model}
\author{Alan Dow \thanks{\textit{keywords: }\textsf{P}-points, Random forcing, strong
\textsf{P}-points, Gruff ultrafilters. \newline\textit{AMS Classification:}
03E05, 03E17, 03E35, 03E65.}
\and Osvaldo Guzm\'{a}n \thanks{The second author was supported by the PAPIIT grant
IA 104124 and the SECIHTI grant CBF2023-2024-903.}}
\maketitle

\begin{abstract}
We prove that \textsf{P}-points (even strong P-points) and Gruff ultrafilters
exist in any forcing extension obtained by adding fewer than $\aleph_{\omega}%
$-many random reals to a model of \textsf{CH. }These results improve and
correct previous theorems that can be found in the literature.

\end{abstract}

\section{Introduction}

Ultrafilters\footnote{Undefined concepts will be reviewed in later sections.}
play a fundamental role in infinite combinatorics, set-theoretic topology and
model theory. From constructing compactifications of topological spaces and
analyzing convergence, to proving Ramsey theorems, finding non-trivial
elementary embeddings of the universe, or building nonstandard models of a
theory, applications of ultrafilters are ubiquitous across these areas of
mathematics. Several important classes of ultrafilters on countable sets have
been introduced and studied over the years. A particularly notable example are
the \textsf{P}-points\emph{, }which were introduced by Walter Rudin in
\cite{Rudin}, to prove that under the Continuum Hypothesis (\textsf{CH}), the
space $\omega^{\ast}=$ $\beta\omega\setminus\omega$ is not homogeneous (the
same conclusion was later established without assuming \textsf{CH }by
Frol\'{\i}k in \cite{SumsofUltrafilters} and further refined by Kunen in
\cite{WeakPpointsin} who explicitly constructed ultrafilters with distinct
topological types). Since then, special classes of ultrafilters on countable
sets became a central topic of study and research.

\qquad\qquad\qquad

Although it is a straightforward theorem of \textsf{ZFC} that there are
(non-principal) ultrafilters on the natural numbers, the existence of
ultrafilters with interesting topological or combinatorial properties is far
more subtle. Moreover, their existence is often independent. The first major
result of this kind was obtained by Kunen in \cite{KunenSomePoints} where he
proved that Ramsey ultrafilters consistently do not exist. Some time later,
Miller proved in \cite{NoQpointsMiller} (see also \cite{Rat}) that
\textsf{Q}-points may not exist and Shelah constructed a model without
\textsf{P}-points (see \cite{Wimmers} and \cite{ProperandImproper}). More
recently, it was proved by Cancino and Zapletal (see \cite{IsbellProblem})
that it is consistent that every (non-principal) ultrafilter on $\omega$ is
Tukey top. For more results regarding the existence or non existence of
special ultrafilters, the reader may consult \cite{betaomega},
\cite{ShelahNowheredense}, \cite{Brendlenowheredense},
\cite{PseudoPpointsandSplittingNumber}, \cite{GruffUltrafilters},
\cite{TherearenoPpointsinSilverExtensions}, \cite{AboveFsigma} or
\cite{OnPospisilIdeals} among many others. As these results illustrate, the
existence of special classes of ultrafilters is a major concern for set
theorists and topologists.

\qquad\qquad

\emph{Random forcing }was introduced by Solovay in \cite{SolovayModelOriginal}
and models obtained by adding more than $\omega_{1}$ many random reals to a
model of \textsf{CH }are often called \emph{random models }(or \emph{random
real models}).\emph{ }Since random forcing is one of the most well-known and
studied forcing notions, one might expect that the structure of ultrafilters
in the random models is very well understood. However, this is far from the
case. The previously mentioned Theorem of Kunen in \cite{KunenSomePoints}
actually shows that there are no Ramsey ultrafilters in the random models. On
the other hand, there are \textsf{Q}-points in those models since the
dominating number is equal to $\omega_{1}$ (see \cite{HandbookBlass}). Now,
the important question is:\textit{ What about \textsf{P}-points? }This is
where the story gets complicated. In an unpublished note, Kunen proved that if
you add $\omega_{1}$ Cohen reals to a model of \textsf{CH }and then\textsf{
}any number of random reals, you will get a \textsf{P}-point. In particular,
\textsf{P}-points exists in some random models. The general case (without the
preliminary Cohen reals) was later addressed by Cohen\footnote{It is worth
pointing out that this is not the same Cohen who intoduced forcing and after
whom the Cohen reals are named.} \cite{PpointsinRandomUniverses}. He defined a
combinatorial object called \emph{pathway,} (which is very similar to Roitman%
\'{}%
s Model Hypothesis (\textsf{MH}), see
\cite{ParacompactnessinProductsOldandNew} and \cite{RotimanPrincipleHM})
proved that the existence of a pathway entails that there is a \textsf{P}%
-point and that pathways exist after adding any number of random reals to a
model of \textsf{CH. }Unfortunately, it was later discovered that the proof of
the existence of pathways is flawed\footnote{Despite of the mistake, the paper
\cite{PpointsinRandomUniverses} is very valuable. The introduction of pathways
is very important and the construction of a \textsf{P}-point from a pathway is
correct.} (see \cite{GruffCorrigendum} and
\cite{TherearenoPpointsinSilverExtensions}). No further progress was made on
this problem until the publication of \cite{PFiltersCohenRandomLaver}, where
the first author proved that there is a \textsf{P}-point if $\omega_{2}$-many
random reals are added to a model of \textsf{CH + }$\square_{\omega_{1}}$ (see
\cite{ParacompactnessinProductsOldandNew} for further results). In the present
work, we improve this result by showing that there are \textsf{P}-points (and
more) if less than $\aleph_{\omega}$ random reals are added to a model of the
Continuum Hypothesis (the principle \textsf{ }$\square_{\omega_{1}}$ is no
longer needed). The proof follows closely the argument from
\cite{PFiltersCohenRandomLaver}, \ but through a careful analysis of the
interaction of countable subsets within countable elementary submodels, we are
able to avoid the use of $\square_{\omega_{1}}$ and extend the result beyond
$\omega_{2}.$ We introduce a new type of combinatorial object, which we call a
\emph{multiple} $\mathfrak{d}$\emph{-pathway. }This notion has some
resemblance to Cohen's pathways, the hypothesis \textsf{MH} and to the
\emph{generalized pathways }introduced by Fern\'{a}ndez-Bret\'{o}n in
\cite{GeneralizedPathways}. We will prove that the existence of a multiple
$\mathfrak{d}$-pathway entails the existence of \textsf{P}-points (even strong
\textsf{P}-points) and Gruff ultrafilters. Finally, it will be proved that
multiple $\mathfrak{d}$-pathways exist if less than $\aleph_{\omega}$ random
or Cohen reals are added to a model of \textsf{CH.}

\qquad\qquad\qquad

The structure of the paper is as follows: after reviewing some notation and
preliminaries, in Section \ref{Seccion multiple pathways}, multiple
$\mathfrak{d}$-pathways will be introduced and we will prove some of their
most fundamental properties. In Section \ref{Ppoint} a \textsf{P}-point is
constructed from a multiple $\mathfrak{d}$-pathway. This construction will be
further refined in Section \ref{strong Ppoint} to get a strong \textsf{P}%
-point. Although in theory, the reader can skip Section \ref{Ppoint} and jump
to Section \ref{strong Ppoint}, we do not recommend it, since the construction
in Section \ref{Ppoint} is the best example to understand how to perform
transfinite recursions using a multiple $\mathfrak{d}$-pathway. Section
\ref{Gruff} contains our last application of multiple $\mathfrak{d}$-pathways,
the construction of a Gruff ultrafilter. In Section \ref{Seccion submodelos}
we develop some combinatorial results regarding countable elementary submodels
that will be needed later. In Section \ref{forzando pathways} we prove that
there are multiple $\mathfrak{d}$-pathways in the models obtained by adding
less than $\aleph_{\omega}$ many random or Cohen reals. Although we are mainly
interested in the random reals, the proof for Cohen reals is exactly the same.

\section{Notation}

For a set $X$, we denote by $\mathcal{P}\left(  X\right)  $ its power set. We
say that $\mathcal{F\subseteq}$ $\mathcal{P}\left(  X\right)  $ is a
\emph{filter on }$X$ if $X\in\mathcal{F}$ and $\emptyset\notin\mathcal{F},$
for every $A,B\subseteq X,$ if $A\in\mathcal{F}$ and $A\subseteq B$ then
$B\in\mathcal{F}$ and if $A,B\in\mathcal{F}$ then $A\cap B\in\mathcal{F}.$ \ A
family $\mathcal{I}\subseteq\mathcal{P}\left(  X\right)  $ is an \emph{ideal
on }$X$ if $\emptyset\in\mathcal{I}$ and $X\notin\mathcal{I},$ for every
$A,B\subseteq X,$ if $A\in\mathcal{I}$ and $B\subseteq A$ then $B\in
\mathcal{I}$ and if $A,B\in\mathcal{I}$ then $A\cup B\in\mathcal{I}.$ If
$\mathcal{B}$ is a family of subsets of $X,$ denote $\mathcal{B}^{\ast
}=\left\{  X\setminus B\mid B\in\mathcal{B}\right\}  .$ It is easy to see that
if $\mathcal{F}$ is a filter then $\mathcal{F}^{\ast}$ is an ideal (called the
\emph{dual ideal of }$\mathcal{F}$) and if $\mathcal{I}$ an ideal then
$\mathcal{I}^{\ast}$ is a filter (called the dual filter of $\mathcal{I}$). If
$\mathcal{I}$ is an ideal on $X$, define $\mathcal{I}^{+}=\mathcal{P}\left(
X\right)  \smallsetminus\mathcal{I}$, which is called the family of
$\mathcal{I}$\emph{-positive sets. }The \emph{Fr\'{e}chet filter }is the
filter of cofinite subsets$.$ An \emph{ultrafilter} is a maximal filter that
extends the Fr\'{e}chet filter (so in this work, all ultrafilters are
non-principal). If $\mathcal{U}$ is an ultrafilter, we say that
$\mathcal{B\subseteq U}$ \emph{is a base of }$\mathcal{U}$ if every element of
$\mathcal{U}$ contains one of $\mathcal{B}.$ We say that a family
$\mathcal{P\subseteq P}\left(  X\right)  $ is \emph{centered }if the
intersection of any finite collection of its elements is infinite.

\qquad\ \qquad\ \ \ \ 

By $\mathfrak{c}$ we denote the cardinality of the set of real numbers. For
any two sets $A$ and $B,$ we say $A\subseteq^{\ast}B$ ($A$ \emph{is an almost
subset of} $B$ or $A$ \emph{is almost contained in }$B$) if $A\setminus B$ is
finite. For $\mathcal{P\subseteq}$ $\left[  \omega\right]  ^{\omega}$ and
$A\in\mathcal{P}\left(  \omega\right)  ,$ we say that $A\ $\emph{is a
pseudointersection of }$\mathcal{P}$ if it is almost contained in all elements
of $\mathcal{P}.$ For $f,g\in\omega^{\omega}$, define $f\leq g$ if and only if
$f\left(  n\right)  \leq g\left(  n\right)  $ for every $n\in\omega$ and
$f\leq^{\ast}g$ if and only if $f\left(  n\right)  \leq g\left(  n\right)  $
holds for all $n\in\omega$ except finitely many. A family $\mathcal{B}%
\subseteq\omega^{\omega}$ is \emph{unbounded }if $\mathcal{B}$ is not bounded
with respect to $\leq^{\ast}.$ A family $\mathcal{D}\subseteq\omega^{\omega}$
is a \emph{dominating family }if for every $f\in\omega^{\omega},$ there is
$g\in\mathcal{D}$ such that $f\leq^{\ast}g.$ The \emph{bounding number
}$\mathfrak{b}$ is the size of the smallest unbounded family and the
\emph{dominating number }$\mathfrak{d}$ is the smallest size of a dominating
family. We say $\mathcal{S}=\left\{  f_{\alpha}\mid\alpha\in\mathfrak{b}%
\right\}  \subseteq\omega^{\omega}$ is a \emph{scale }if $\mathcal{S}$ is
dominating and $f_{\alpha}\leq^{\ast}f_{\beta}$ whenever $\alpha<\beta.$ It is
easy to see that $\mathfrak{b=d}$ is equivalent to the existence of a scale. A
function $f\in\omega^{\omega}$ is \emph{unbounded over a model }$M$ if
$f\nleq^{\ast}g$ for every $g\in M$ and \emph{is dominating over }$M$ if it
dominates every element of $M\cap\omega^{\omega}.$

\qquad\ \ \ \qquad\ \ \ \ 

$\mathcal{P}\left(  \omega\right)  $ will have its natural topology, which is
homeomorphic to $2^{\omega}$. In this way, the topology of $\mathcal{P}\left(
\omega\right)  $ has for a subbase the sets of the form $\left\langle
n\right\rangle _{0}=\left\{  A\subseteq\omega\mid n\notin A\right\}  $ and
$\left\langle n\right\rangle _{1}=\left\{  A\subseteq\omega\mid n\in
A\right\}  $, for $n\in\omega.$

\qquad\ \ \ \ 

A \emph{Polish space }is a separable and completely metrizable space. The
Baire space ($\omega^{\omega}$) and the Cantor space ($2^{\omega}$) are
examples of Polish spaces. \ We will need the concepts of $F_{\sigma},$
$G_{\delta},$ Borel, analytic, coanalytic and projective subsets of a Polish
space, which can be found in \cite{Kechris} or \cite{BorelSrivastava}. We say
that $T\subseteq2^{<\omega}$ is a \emph{tree }if it is closed under taking
initial segments and $f\in2^{\omega}$ is a \emph{branch of }$T$ if
$f\upharpoonright n\in T$ for every $n\in\omega.$ The set of all branches of
$T$ is denoted by $\left[  T\right]  $. In this context, a tree is
\emph{well-pruned }if every node can be extended to a branch. It is well known
that the compact subsets of $2^{\omega}$ correspond to branches of well-pruned
subtrees of $2^{<\omega}$ (see \cite{Kechris}). For $n\in\omega$, the
$n$\emph{-level of the tree} $T$ is denoted by $T_{n}.$

\qquad\qquad\qquad

Let $X$ be a topological space. We say that $C\subseteq X$ \emph{is crowded
}if it does not have an isolated point. A \emph{perfect subset of} $X$ is a
closed, non-empty crowded subset of $X.$ On the other hand, $S\subseteq X$
\emph{is scattered }if it does not contain a non-empty crowded subset.

\qquad\qquad\qquad\ \ 

We will work extensively with elementary submodels. The reader is invited to
consult \cite{AlanSubmodelos} for their most important properties and to learn
how to apply them in topology and set theory. For $\kappa$ a cardinal, by
\textsf{H}$\left(  \kappa\right)  $ we denote the collection of all subsets
whose transitive closure has size less than $\kappa.$ For $M$ a countable
elementary submodel of \textsf{H}$\left(  \kappa\right)  $ (and $\kappa
>\omega_{1}$), \emph{the height of }$M$ is $\delta_{M}=M\cap\omega_{1}.$ It is
easy to see that it is always a countable ordinal. We will fix
$\trianglelefteq$ a well order of \textsf{H}$\left(  \kappa\right)  $ and by
\textsf{Sub}$\left(  \kappa\right)  $ we denote the set of all countable
$M\subseteq$ \textsf{H}$\left(  \kappa\right)  $ such that $\left(
M,\in,\trianglelefteq\right)  $ is an elementary submodel of $($%
\textsf{H}$\left(  \kappa\right)  ,\in,\trianglelefteq).$ The well ordering
will play a key role in some of our arguments.

\qquad\ \ \ 

For $A$ a set of the ordinal numbers, we will denote by \textsf{OT}$\left(
A\right)  $ its order type. If $f$ is a function, by \textsf{dom}$\left(
f\right)  $ we denote its domain and \textsf{im}$\left(  f\right)  $ is its image.

\section{Forcing preliminaries \label{Preliminares Forcing}}

We review some preliminaries on forcing that will be needed in Section
\ref{forzando pathways}. Naturally, we assume the reader is already familiar
with the method of forcing as presented in \cite{oldKunen}.

\qquad\ \ \ 

Let $\mathbb{P}$ and $\mathbb{Q}$ be partial orders. By $V^{\mathbb{P}}$ we
denote the class of all $\mathbb{P}$-names as defined in \cite{oldKunen}. An
isomorphism $F:\mathbb{P\longrightarrow Q}$ can be extended recursively to a
bijection between $V^{\mathbb{P}}$ and $V^{\mathbb{Q}}$ (which we will also
denote as $F$) by letting $F\left(  \dot{a}\right)  =\{(F(\dot{b}),F\left(
p\right)  )\mid(\dot{b},p)\in\dot{a}\}.$ The proof of the following result can
essentially be found in \cite{Jech} or \cite{JechAC} and is easy to prove by
induction on the rank of names.

\begin{proposition}
Let $\mathbb{P}$ and $\mathbb{Q}$ be partial orders and
$F:\mathbb{P\longrightarrow Q}$ an isomorphism. For every $p\in\mathbb{P},$
$\varphi$ a formula, $\dot{a}_{0},...,\dot{a}_{n}\in V^{\mathbb{P}}$ and sets
$b_{0},...,b_{m},$ the following are equivalent:
\label{Prop formula isomorfismo}

\begin{enumerate}
\item $p\Vdash$\textquotedblleft$\varphi(\dot{a}_{0},...,\dot{a}_{n}%
,b_{0},...,b_{m})$\textquotedblright.

\item $F\left(  p\right)  \Vdash$\textquotedblleft$\varphi(F\left(  \dot
{a}_{0}\right)  ,...,F\left(  \dot{a}_{n}\right)  ,b_{0},...,b_{m}%
)$\textquotedblright.\ 
\end{enumerate}
\end{proposition}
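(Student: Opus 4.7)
The plan is to proceed by the standard induction-on-formulas argument for preservation of the forcing relation under isomorphism, with the base case itself requiring an auxiliary induction on the rank of names.

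First I would collect two preparatory facts about the extended map $F: V^{\mathbb{P}} \to V^{\mathbb{Q}}$. Since $F$ is an order isomorphism it sends the maximum of $\mathbb{P}$ to the maximum of $\mathbb{Q}$; consequently, by a straightforward $\in$-induction on $b$, one gets $F(\check{b}) = \check{b}$ for every set $b$. This justifies inserting the ground-model parameters $b_0, \ldots, b_m$ unchanged on both sides of the equivalence (they are really the canonical names $\check{b}_i$, which are fixed by $F$). I would also note that $F$ and $F^{-1}$ are both isomorphisms, so by symmetry it suffices to prove one direction of the equivalence.

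Next comes the heart of the argument: the equivalence for atomic formulas $\dot{a} = \dot{b}$ and $\dot{a} \in \dot{b}$, proved by simultaneous induction on the sum of the ranks of the names involved. The relevant definition of the atomic forcing relation (as in Kunen) expresses $p \Vdash \dot{a} = \dot{b}$ and $p \Vdash \dot{a} \in \dot{b}$ purely in terms of (i) the ordering of $\mathbb{P}$, (ii) density of certain sets of conditions below $p$, and (iii) atomic forcing statements about names of strictly smaller rank. Each of these ingredients is transported by $F$: conditions $q \leq p$ correspond bijectively to conditions $F(q) \leq F(p)$, dense sets transfer to dense sets under the isomorphism, and the smaller-rank atomic statements transfer by the induction hypothesis. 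Walking through the Kunen-style clauses line by line yields the atomic case.

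With the atomic case settled, I would run the induction on formula complexity. The propositional connectives are immediate: for conjunction, apply the induction hypothesis to each conjunct; for negation, use that $p \Vdash \neg \varphi$ iff no $q \leq p$ forces $\varphi$, and observe that $\{q : q \leq p\}$ corresponds under $F$ to $\{q' : q' \leq F(p)\}$. The existential quantifier step is the one that uses the full bijectivity of $F$ on names: $p \Vdash \exists x\, \varphi(x, \vec{\dot{a}}, \vec{b})$ iff $\{q \leq p : \exists \dot{c} \in V^{\mathbb{P}}\ (q \Vdash \varphi(\dot{c}, \vec{\dot{a}}, \vec{b}))\}$ is dense below $p$, and by the induction hypothesis together with $F$ being a bijection $V^{\mathbb{P}} \to V^{\mathbb{Q}}$, this density transfers to the corresponding set below $F(p)$ in $\mathbb{Q}$.

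The main obstacle, as usual, is bookkeeping in the atomic base case: the recursive definition of $p \Vdash \dot{a} = \dot{b}$ interleaves with $p \Vdash \dot{a} \in \dot{b}$, so the induction must be set up on a well-founded measure (the pair of ranks, or their sum) and both relations handled in lockstep. Once the atomic case is cleanly established, the propagation through logical connectives is routine.
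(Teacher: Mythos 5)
Your proposal is correct and is exactly the standard argument the paper has in mind: the paper gives no proof of its own, deferring to Jech with the remark that the result "is easy to prove by induction on the rank of names," which is precisely your atomic base case, followed by the routine induction on formula complexity. Your additional observations (that $F$ fixes check-names so the ground-model parameters $b_0,\ldots,b_m$ pass through unchanged, and that isomorphisms carry dense-below-$p$ sets to dense-below-$F(p)$ sets) are the right bookkeeping and match the intended argument.
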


\qquad\qquad\qquad\ \ 

We now review the standard method for adding many random or Cohen reals using
finite support. Proofs of the results mentioned below can be found in
\cite{KunenCohenyRandom}. For a set $I,$ we will always equip $2^{I}$ with its
usual Tychonoff topology (where $2=\left\{  0,1\right\}  $ is a discrete space).

\begin{definition}
Let $I,J$ be two infinite sets and $\triangle:I\longrightarrow J$ an injective function.

\begin{enumerate}
\item Define $\triangle^{\prime}:2^{J}\longrightarrow2^{I}$ given by
$\triangle^{\prime}\left(  f\right)  =f\circ\triangle$ for $f\in2^{J}.$

\item Define $\triangle_{\ast}:\mathcal{P}\left(  2^{I}\right)
\longrightarrow\mathcal{P}\left(  2^{J}\right)  $ where $\triangle_{\ast
}\left(  B\right)  =\left\{  f\in2^{J}\mid f\circ\triangle\in B\right\}  $ (in
other words, $\triangle_{\ast}\left(  B\right)  =\left(  \triangle^{\prime
}\right)  ^{-1}\left(  B\right)  $).
\end{enumerate}
\end{definition}

\qquad\qquad\ \ \ \ 

It is easy to see that if $\triangle:I\longrightarrow J$ is a bijection, then
$\triangle^{\prime}$ is a homeomorphism. The following proposition follows
from standard computations and diagram chasing arguments:

\begin{proposition}
Let $I,J,K$ be three infinite sets and $\triangle:I\longrightarrow J$,
$\sigma:J\longrightarrow K$ bijective functions. \label{prop estrellita}

\begin{enumerate}
\item $\left(  Id_{I}\right)  _{\ast}=Id_{2^{I}}$ (where $Id_{X}$ denotes the
identity mapping of a set $X$).

\item $\left(  \sigma\circ\triangle\right)  _{\ast}=\sigma_{\ast}%
\circ\triangle_{\ast}.$

\item $\left(  \triangle^{-1}\right)  _{\ast}=\left(  \triangle_{\ast}\right)
^{-1}.$

\item $\triangle_{\ast}$ is a bijection.
\end{enumerate}
\end{proposition}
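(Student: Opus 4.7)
The plan is to reduce all four assertions to elementary facts about the pullback map $\triangle'$, exploiting the defining identity $\triangle_{\ast}(B) = (\triangle')^{-1}(B)$. Once I verify that the assignment $\triangle \mapsto \triangle'$ is contravariantly functorial, each claim about $\triangle_{\ast}$ follows automatically, because taking preimages introduces a second order-reversal that restores covariance.

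First, for (1), since $(Id_I)'(f) = f \circ Id_I = f$, the map $(Id_I)'$ is the identity on $2^I$, and hence its preimage map $(Id_I)_{\ast}$ is the identity on $\mathcal{P}(2^I)$. For (2), I compute directly: for any $f \in 2^K$,
\[
(\sigma \circ \triangle)'(f) \;=\; f \circ \sigma \circ \triangle \;=\; (f \circ \sigma) \circ \triangle \;=\; \triangle'\bigl(\sigma'(f)\bigr),
\]
so $(\sigma \circ \triangle)' = \triangle' \circ \sigma'$. Taking preimages reverses this order a second time, giving
\[
(\sigma \circ \triangle)_{\ast} \;=\; \bigl((\sigma \circ \triangle)'\bigr)^{-1} \;=\; (\sigma')^{-1} \circ (\triangle')^{-1} \;=\; \sigma_{\ast} \circ \triangle_{\ast},
\]
which is exactly the identity claimed.

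For (3) and (4), I combine (1) and (2). Applying (2) to the pair $(\triangle^{-1}, \triangle)$ and then to the pair $(\triangle, \triangle^{-1})$, and invoking (1) for the resulting identity compositions, I obtain
\[
(\triangle^{-1})_{\ast} \circ \triangle_{\ast} \;=\; Id_{\mathcal{P}(2^I)} \qquad \text{and} \qquad \triangle_{\ast} \circ (\triangle^{-1})_{\ast} \;=\; Id_{\mathcal{P}(2^J)}.
\]
This simultaneously shows that $\triangle_{\ast}$ is a bijection and that its two-sided inverse is $(\triangle^{-1})_{\ast}$, establishing (3) and (4) in a single stroke.

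I do not anticipate any serious obstacle: the whole argument is the kind of diagram chase the authors foreshadow. The only point deserving attention is the bookkeeping of contravariance, since $\triangle'$ reverses composition order while $\triangle_{\ast}$ must preserve it; this double reversal is precisely what causes the order in (2) to come out as stated rather than the opposite.
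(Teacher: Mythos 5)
Your proof is correct and is precisely the ``standard computations and diagram chasing'' that the paper alludes to without writing out: the contravariance of $\triangle\mapsto\triangle'$ composed with the contravariance of taking preimages yields the covariant identities, and (3) and (4) follow formally from (1) and (2). Nothing further is needed.
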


\qquad\ \ \ \ 

We say $B\subseteq$ $2^{I}$ is \emph{Baire} if it belongs to the smallest
$\sigma$-algebra that contains all clopen sets. Denote by \textsf{Baire}%
$\left(  2^{I}\right)  $ the collection of all the Baire subsets of $2^{I}.$
If $I$ is countable, then the notion of Borel and Baire coincide, but they are
not the same if $I$ is uncountable. If $\triangle:I\longrightarrow J$ is
bijective, then $\triangle_{\ast}\upharpoonright$ \textsf{Baire}$\left(
2^{I}\right)  $ is a bijection between \textsf{Baire}$\left(  2^{I}\right)  $
and \textsf{Baire}$\left(  2^{J}\right)  .$

\begin{definition}
Let $I$ be an infinite set.

\begin{enumerate}
\item $\mathcal{M}_{I}$ denotes the $\sigma$-ideal of meager sets in $2^{I}.$

\item $\mathcal{N}_{I}$ denotes the $\sigma$-ideal of null sets in $2^{I}$
(where $2^{I}$ has the standard product measure).

\item Cohen forcing on $I$ (denoted by $\mathbb{C}\left(  I\right)  $) is the
set of all Baire subsets of $2^{I}$ that are not in $\mathcal{M}_{I}.$ We
order $\mathbb{C}\left(  I\right)  $ by inclusion: $A\leq B$ if and only if
$A\subseteq B$.

\item Random forcing on $I$ (denoted by $\mathbb{B}\left(  I\right)  $) is the
set of all Baire subsets of $2^{I}$ that are not in $\mathcal{N}_{I}.$ We
order $\mathbb{B}\left(  I\right)  $ by inclusion: $A\leq B$ if and only if
$A\subseteq B$.
\end{enumerate}
\end{definition}

\qquad\qquad\ \ 

Note that as we defined them, neither $\mathbb{C}\left(  I\right)  $ or
$\mathbb{B}\left(  I\right)  $ are separative. Alternatively, we could take
their quotients. This choice does not affect the content of the paper. If
$\triangle:I\longrightarrow J$ is bijective, then $\triangle_{\ast}%
:\mathbb{C}\left(  I\right)  \longrightarrow\mathbb{C}\left(  J\right)  $ and
$\triangle_{\ast}:\mathbb{B}\left(  I\right)  \longrightarrow\mathbb{B}\left(
J\right)  $ are isomorphism of partial orders. If $K\subseteq I,$ then
$\mathbb{B}\left(  K\right)  $ ($\mathbb{C}\left(  K\right)  $) is isomorphic
to a regular suborder of $\mathbb{B}\left(  I\right)  $ ($\mathbb{C}\left(
I\right)  $), and for convenience we will regard them as actual suborders.
Furthermore, if $\dot{a}$ is a $\mathbb{B}\left(  I\right)  $-name for a
subset of $\omega,$ we can find a countable $K\subseteq I$ such that $\dot{a}$
is a $\mathbb{B}\left(  K\right)  $-name (and the same for Cohen forcing). We
will be using all these facts implicitly.

\section{Multiple $\mathfrak{d}$-pathways \label{Seccion multiple pathways}}

Multiple $\mathfrak{d}$-pathways will be introduced in this section and we
will prove some of their most fundamental properties. Before proceeding, we
introduce some definitions.

\begin{definition}
Let $X$ be a set and $n\in\omega.$ A relation $R\subseteq X^{n}\times
\omega^{\omega}$ is $\leq^{\ast}$\emph{-adequate }if for every $x_{1}%
,...,x_{n}\in X,$ there is $f\in\omega^{\omega}$ such that for every
increasing $g\in\omega^{\omega},$ if $g\nleq^{\ast}f,$ then the relation
$R\left(  x_{1},...,x_{n},g\right)  $ holds.
\end{definition}

\qquad\qquad\ \ 

A function $f$ as above will be called an $R$\emph{-control for} $\left(
x_{1},...,x_{n}\right)  .$ It is not hard to find $\leq^{\ast}$-adequate
relations and several examples will be provided in the text.

\begin{definition}
Let $M_{0},...,M_{n}$ be countable elementary submodels of some \textsf{H}%
$\left(  \kappa\right)  .$ We will say that the sequence $\left\langle
M_{0},...,M_{n}\right\rangle $ is $\delta$\emph{-increasing }if $\delta
_{M_{i}}\leq\delta_{M_{i+1}}$ for each $i<n.$
\end{definition}

\qquad\qquad\ \ \ 

It is worth pointing out that in our work, the sequence $\left\langle
M_{0},...,M_{n}\right\rangle $ is typically not an $\in$-chain (which is often
the case when working with models as side conditions, see
\cite{PartitionProblems} and \cite{NotesonForcingAxioms}) and it will often be
the case that $\delta_{M_{i}}=\delta_{M_{i+1}}$ for some $i<n.$ When
discussing a specific $\delta$-increasing sequence$\ \left\langle
M_{0},...,M_{n}\right\rangle ,$ it will be understood that $n\in\omega$ and we
will write $\delta_{i}$ instead of $\delta_{M_{i}}$ in case there is no risk
of confusion.

\qquad\ \ \ 

We can finally introduce the main definition of this section:

\begin{definition}
Let $\kappa>\mathfrak{c}$ be a regular cardinal, $\mathcal{B}$ $=\left\{
f_{\alpha}\mid\alpha<\omega_{1}\right\}  \subseteq\omega^{\omega}$ a family of
increasing functions and $\mathcal{S\subseteq}$ \textsf{Sub}$\left(
\kappa\right)  $ a stationary subset of $[$\emph{H}$\left(  \kappa\right)
]^{\omega}$ such that every model in $\mathcal{S}$ has $\mathcal{B}$ as an
element. We say that $\left(  \mathcal{B},\mathcal{S}\right)  $ is a
\emph{multiple} $\mathfrak{d}$\emph{-pathway }if for every $\delta$-increasing
sequence $\left\langle M_{0},...,M_{n}\right\rangle $ of models from
$\mathcal{S},$ for every projective and $\leq^{\ast}$-adequate relation
$R\subseteq\left(  \omega^{\omega}\right)  ^{n+2}$ with $R\in M_{n}$ and
$x_{i}\in M_{i}$ for $i\leq n,$ we have that $R\left(  x_{0},...,x_{n}%
,f_{\delta_{n}}\right)  $ holds.
\end{definition}

\qquad\ \ \ 

Several comments regarding the definition are in order:

\begin{enumerate}
\item The term \textquotedblleft multiple\textquotedblright\ indicates that
there are typically several models that share the same height, which is not
the case in the Model Hypothesis of Roitman.

\item For our applications, the stationarity of $\mathcal{S}$ is only used to
ensure that every real (and hence every countable ordinal) appears in some
model of $\mathcal{S}.$

\item The relation $R$ is required to be projective. The key feature of this
requirement is that it can be coded by a real, so several strengthenings or
weakenings are possible. For the applications of \textsf{P}-points and strong
\textsf{P}-points, Borel relations are enough, but it appears that more is
needed for Gruff ultrafilters.

\item Following the approach of Fern\'{a}ndez-Bret\'{o}n in
\cite{GeneralizedPathways}, it is possible to define a notion of a multiple
pathway for more cardinal invariants of the continuum. We do not pursue this
approach here, since we do not have applications for other cardinal
invariants. However, the study of multiple pathways parametrized by cardinal
invariants of the continuum might prove fruitful in the future.
\end{enumerate}

\qquad\qquad\qquad\ \ \ 

To avoid constant repetition, when working with a multiple $\mathfrak{d}%
$-pathway $\left(  \mathcal{B},\mathcal{S}\right)  ,$ we will always use
$\left\{  f_{\alpha}\mid\alpha<\omega_{1}\right\}  $ to refer to the sequence
$\mathcal{B}$. We have the following simple result regarding multiple pathways:

\begin{proposition}
Let $\left(  \mathcal{B},\mathcal{S}\right)  $ be a multiple $\mathfrak{d}%
$-pathway. For every $M\in\mathcal{S},$ the function $f_{\delta_{M}}$ is
unbounded over $M.$ In particular, the existence of a multiple $\mathfrak{d}%
$-pathway implies that $\mathfrak{b}=\omega_{1}.$ \label{Escala}
\end{proposition}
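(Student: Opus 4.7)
The plan is to apply the defining property of a multiple $\mathfrak{d}$-pathway to the length-one sequence $\langle M \rangle$ (so with $n=0$) using the Borel relation
\[
R := \{(x,y) \in (\omega^{\omega})^{2} \mid y \nleq^{\ast} x\}.
\]
First I would verify the three hypotheses of the pathway definition. \emph{Projectivity}: $R$ is the complement of the $F_{\sigma}$ set $\{(x,y) \mid y \leq^{\ast} x\}$, so $R$ is Borel and in particular projective. \emph{Membership in $M$}: the set $R$ is definable in $\mathsf{H}(\kappa)$ without parameters, so by elementarity of $M$, $R \in M$. \emph{$\leq^{\ast}$-adequacy}: for a given $x \in \omega^{\omega}$, take the $R$-control to be $f := x$ itself; then whenever $g$ is increasing and $g \nleq^{\ast} x$, the relation $R(x,g)$ holds by definition.

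Now given any $g \in M \cap \omega^{\omega}$, applying the pathway condition with $x_{0} := g$ yields $R(g, f_{\delta_{M}})$, i.e., $f_{\delta_{M}} \nleq^{\ast} g$. Since $g$ was arbitrary, $f_{\delta_{M}}$ is unbounded over $M$.

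For the ``in particular'' clause, I would invoke the stationarity of $\mathcal{S}$ exactly as the authors indicate in their second remark following the definition: every real belongs to some $M \in \mathcal{S}$. Given $g \in \omega^{\omega}$, fix such an $M$. Then $\delta_{M} < \omega_{1}$, so $f_{\delta_{M}} \in \mathcal{B}$ and $f_{\delta_{M}} \nleq^{\ast} g$. Thus $\mathcal{B}$ is an unbounded family of size $\aleph_{1}$, which together with the \textsf{ZFC} bound $\mathfrak{b} \geq \aleph_{1}$ gives $\mathfrak{b} = \aleph_{1}$.

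I do not expect any substantive obstacle here; the main content is recognizing that ``$y$ does not eventually dominate $x$'' is tautologically $\leq^{\ast}$-adequate with witness $f := x$, which converts the abstract pathway condition directly into the desired unboundedness statement in a single line.
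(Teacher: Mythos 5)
Your proposal is correct and follows essentially the same route as the paper: the same relation $R(x,g)\iff g\nleq^{*}x$, applied to the one-model sequence $\langle M\rangle$, with the tautological control $f:=x$ witnessing $\leq^{*}$-adequacy, and stationarity of $\mathcal{S}$ giving coverage of $\omega^{\omega}$ for the $\mathfrak{b}=\omega_{1}$ conclusion. Your write-up just spells out the adequacy and elementarity checks that the paper leaves implicit.
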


\begin{proof}
Let $M\in\mathcal{S}$. Define the relation $R\subseteq\left(  \omega^{\omega
}\right)  ^{2}$ where $R\left(  g,f\right)  $ if $f$ $\nleq^{\ast}g.$ Clearly
$R\in M$, is a $G_{\delta}$ relation and is $\leq^{\ast}$-adequate. Since
obviously $\left\langle M\right\rangle $ is $\delta$-increasing, it follows
that every real in $M$ is $R$ related to $f_{\delta_{M}}.$ Finally, since the
models in a multiple $\mathfrak{d}$-pathway cover $\omega^{\omega},$ we
conclude that $\mathcal{B}$ is unbounded, hence $\mathfrak{b}=\omega_{1}.$
\end{proof}

\qquad\qquad\qquad\qquad

The definition of multiple $\mathfrak{d}$-pathway only mentions relations on
the Baire space, however, we can extend it to any Polish space, as the next
lemma shows:

\begin{lemma}
Let $\left(  \mathcal{B},\mathcal{S}\right)  $ be a multiple $\mathfrak{d}%
$-pathway and $\left\langle M_{0},...,M_{n}\right\rangle $ a $\delta
$-increasing sequence of models of $\mathcal{S}.$ For every Polish space $X\in
M_{0}\cap...\cap M_{n},$ $R\subseteq X^{n+1}\times\omega^{\omega}$ projective
and $\leq^{\ast}$-adequate relation in $M_{n}$ and $x_{i}\in X\cap M_{i}$ for
$i\leq n,$ the relation $R\left(  x_{0},...,x_{n},f_{\delta_{n}}\right)  $ holds.
\end{lemma}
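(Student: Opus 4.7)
The plan is to reduce the claim to the Baire-space case already built into the definition of a multiple $\mathfrak{d}$-pathway, by transferring $X$ to $\omega^{\omega}$ via a canonical Borel injection. Every Polish space admits a Borel injection into $\omega^{\omega}$ with Borel image (see, e.g., \cite{Kechris}), so using the fixed well-ordering $\trianglelefteq$ from the preliminaries, I would let $h : X \to \omega^{\omega}$ be the $\trianglelefteq$-least such injection. Since $X \in M_{0} \cap \cdots \cap M_{n}$ and $\trianglelefteq$ is part of the elementary structure, elementarity ensures $h \in M_{i}$ for every $i \leq n$, and likewise the Borel code of $h(X)$ lies in each $M_{i}$. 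This canonical choice is what makes the reduction work.

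Next, I would pull $R$ back to a relation $R' \subseteq (\omega^{\omega})^{n+2}$ on the Baire space by declaring $R'(y_{0}, \ldots, y_{n}, g)$ to hold iff either some $y_{i}$ fails to belong to $h(X)$, or $R(h^{-1}(y_{0}), \ldots, h^{-1}(y_{n}), g)$ holds. Three quick verifications are needed: (i) $R'$ is projective, since it is built from the projective relation $R$ and the Borel maps $h$ and $h^{-1}\upharpoonright h(X)$; (ii) $R' \in M_{n}$, since it is defined from $R$ and $h$ using only parameters in $M_{n}$; (iii) $R'$ is $\leq^{\ast}$-adequate, because given $y_{0}, \ldots, y_{n}$ one may take any function as control when some $y_{i} \notin h(X)$, and otherwise use an $R$-control for $(h^{-1}(y_{0}), \ldots, h^{-1}(y_{n}))$, which serves as an $R'$-control by construction.

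Finally, applying the defining property of the multiple $\mathfrak{d}$-pathway $(\mathcal{B}, \mathcal{S})$ to the $\delta$-increasing sequence $\langle M_{0}, \ldots, M_{n}\rangle$, the relation $R'$, and the points $h(x_{i})$ (note that $h \in M_{i}$ and $x_{i} \in M_{i}$ give $h(x_{i}) \in M_{i}$), I obtain $R'(h(x_{0}), \ldots, h(x_{n}), f_{\delta_{n}})$. Since $h(x_{i}) \in h(X)$ for every $i$, the first disjunct in the definition of $R'$ fails, so this forces $R(x_{0}, \ldots, x_{n}, f_{\delta_{n}})$, as required. The only delicate point is ensuring $h \in M_{i}$ for every $i \leq n$; this is precisely what the fixed well-ordering $\trianglelefteq$ is designed to handle, so no real obstacle arises beyond that routine verification.
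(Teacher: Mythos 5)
Your proof is correct. It follows the same overall strategy as the paper — reduce to the Baire-space clause of the definition via a canonical transfer map fixed by the well-order $\trianglelefteq$, then verify projectivity, membership in $M_{n}$, and $\leq^{\ast}$-adequacy of the transferred relation — but the transfer goes in the opposite direction. The paper fixes (as the $\trianglelefteq$-least such object, hence lying in every $M_{i}$) a \emph{continuous surjection} $H:\omega^{\omega}\longrightarrow X$, defines $P(g_{0},\dots,g_{n},f)$ to hold iff $R(H(g_{0}),\dots,H(g_{n}),f)$, and uses surjectivity together with elementarity to choose $g_{i}\in M_{i}\cap\omega^{\omega}$ with $H(g_{i})=x_{i}$; projectivity of $P$ is then immediate since $P$ is a continuous preimage of $R$. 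Your version, using a Borel injection $h:X\longrightarrow\omega^{\omega}$ with Borel image, avoids having to select preimages inside each model (you simply push each $x_{i}$ forward to $h(x_{i})\in M_{i}$), at the price of invoking the Borel embedding theorem and of adding the disjunct for tuples meeting the complement of $h(X)$, which is genuinely needed to keep $R'$ $\leq^{\ast}$-adequate and which you handle correctly. Both arguments rest on the same essential device, namely that the $\trianglelefteq$-least map of the relevant kind is definable from $X$ alone, so elementarity of each $(M_{i},\in,\trianglelefteq)$ places it in every model of the sequence; since the definition of a multiple $\mathfrak{d}$-pathway only asks that the transferred relation be projective, the slightly weaker complexity bound you obtain (Borel preimage of a projective set, plus a Borel disjunct, rather than a continuous preimage) costs nothing.
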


\begin{proof}
We first find a continuous surjection $H:\omega^{\omega}\longrightarrow X$
with $H\in M_{0}\cap...\cap M_{n}$ (since $X$ is a Polish space, there is a
continuous surjection from $\omega^{\omega}$ to $X$, we now use the well order
of \textsf{H}$\left(  \kappa\right)  $ to find one, denoted $H$, that is in
all our models). Define the relation $P\subseteq\left(  \omega^{\omega
}\right)  ^{n+2}$ where $P\left(  g_{0},...,g_{n},f\right)  $ holds just in
case $R\left(  H\left(  g_{0}\right)  ,...,H\left(  g_{n}\right)  ,f\right)  $
is true. Note that $P\in M_{n}$ and is $\leq^{\ast}$-adequate. Moreover, it is
easy to see that $P$ is a continuous preimage of $R,$ so $P$ is projective as
well. $H$ is surjective and it is in every $M_{i},$ so we can find $g_{i}\in
M_{i}\cap\omega^{\omega}$ such that $H\left(  g_{i}\right)  =x_{i}.$ Since
$\left(  \mathcal{B},\mathcal{S}\right)  $ is a multiple $\mathfrak{d}%
$-pathway, we know that $P\left(  g_{0},...,g_{n},f_{\delta_{n}}\right)  $ is
true, which means that $R\left(  x_{0},...,x_{n},f_{\delta_{n}}\right)  $ is
true as well.
\end{proof}

\qquad\qquad

This covers the basics of multiple $\mathfrak{d}$-pathways and we are ready to
move on to applications.

\section{A P-point from a multiple $\mathfrak{d}$-pathway \label{Ppoint}}

An ultrafilter $\mathcal{U}$ on $\omega$ is a \textsf{P}\emph{-point }if every
countable subfamily of $\mathcal{U}$ has a pseudointersection in
$\mathcal{U}.$ Without a doubt, the class of \textsf{P}-points is among the
most important and studied families of ultrafilters on countable sets. Note
that ultrafilters that are not \textsf{P}-points are very easy to construct
(for example, every ultrafilter extending the dual filter of the density zero
ideal). In this way, the challenge is in constructing \textsf{P}-points.
Shelah was the first to show that it is consistent that there are no
\textsf{P}-points (see \cite{Wimmers} and \cite{ProperandImproper}). On the
other hand, several set theoretic axioms imply the existence of a
\textsf{P}-point, for example the equality $\mathfrak{d=c},$ the inequality
$\mathfrak{u<d}$ (see \cite{HandbookBlass}) or the parametrized diamond
$\Diamond\left(  \mathfrak{r}\right)  $ from \cite{ParametrizedDiamonds}. None
of these principles hold in the random real models, which makes the
construction of \textsf{P}-points in such models very interesting. We will now
build a \textsf{P}-point from a multiple $\mathfrak{d}$-pathway. Our approach
is inspired by Theorem 5.7 of \cite{PFiltersCohenRandomLaver}.

\begin{definition}
Let $F:\omega\longrightarrow\mathcal{P}\left(  \omega\right)  $ and
$g\in\omega^{\omega}.$ Define $F^{g}=%
{\textstyle\bigcup\limits_{n\in\omega}}
F\left(  n\right)  \cap g\left(  n\right)  .$
\end{definition}

\qquad\qquad\ \ \ 

It is clear that if $F$ is $\subseteq$-decreasing, then $F^{g}$ is a (possibly
finite) pseudointersection of \textsf{im}$\left(  F\right)  .$ Note that if
$f\leq g,$ then $F^{f}\subseteq F^{g}.$ It is trivial to see that if $F$ is
the constant function with value\ $A\subseteq\omega$ and $g\in\omega^{\omega}$
is increasing, then $F^{g}=A.$ The following results are easy and we leave
them to the reader.

\begin{lemma}
Let $F:\omega\longrightarrow\left[  \omega\right]  ^{\omega}$ be $\subseteq
$-decreasing. There is $f\in\omega^{\omega}$ such that for every increasing
$g\in\omega^{\omega},$ if $g\nleq^{\ast}f,$ then $F^{g}$ is infinite.
\label{adecuada P punto}\qquad\ \ \qquad\ \ \ 
\end{lemma}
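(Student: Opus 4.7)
The plan is to choose $f(n)$ so large that the cut-off $F(n)\cap f(n)$ already contains at least $n$ elements, and then observe that if $g$ is increasing with $g\nleq^{\ast}f$, then at the infinitely many coordinates where $g$ beats $f$ the set $F^{g}$ inherits arbitrarily large pieces of the $F(n)$'s.

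First I would unwind the notation: in the expression $F(n)\cap g(n)$, the natural number $g(n)$ is identified with the initial segment $\{0,1,\dots,g(n)-1\}$, so increasing $g(n)$ only enlarges this cut-off. With that in mind, since each $F(n)$ is infinite, I can enumerate $F(n)$ increasingly as $a^{n}_{0}<a^{n}_{1}<\dots$ and set $f(n)=a^{n}_{n}+1$. Then $f\in\omega^{\omega}$ and by construction $|F(n)\cap f(n)|\geq n$ for every $n$.

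Now fix any increasing $g\in\omega^{\omega}$ with $g\nleq^{\ast}f$; the set $A=\{n\in\omega:g(n)>f(n)\}$ is then infinite. For each $n\in A$ the inequality $g(n)>f(n)$ yields $F(n)\cap g(n)\supseteq F(n)\cap f(n)$, hence $|F(n)\cap g(n)|\geq n$. Since $F^{g}\supseteq F(n)\cap g(n)$ for every $n$, the set $F^{g}$ contains finite subsets of arbitrarily large cardinality (letting $n$ range over $A$), and therefore $F^{g}$ is infinite.

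The main ``obstacle'' is really just bookkeeping: one must remember that $g(n)$ is being used as an initial segment, not as a single element, so that the comparison $g(n)>f(n)$ translates into the inclusion $F(n)\cap g(n)\supseteq F(n)\cap f(n)$ needed to push the lower bound $|F(n)\cap f(n)|\geq n$ across. Note that the hypothesis that $F$ is $\subseteq$-decreasing is not actually used in this argument; it is what was invoked in the remark preceding the lemma to ensure that $F^{g}$ is a pseudointersection of $\operatorname{\textsf{im}}(F)$, which is not part of the present conclusion.
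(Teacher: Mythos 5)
Your proof is correct, and since the paper explicitly leaves Lemma \ref{adecuada P punto} to the reader, your argument (take $f(n)$ past the $n$-th element of $F(n)$, then use the infinitely many $n$ with $g(n)>f(n)$ to get arbitrarily large finite pieces inside $F^{g}$) is exactly the intended one. Your observation that neither the $\subseteq$-decreasing hypothesis on $F$ nor the monotonicity of $g$ is actually needed here is also accurate.
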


\begin{lemma}
\label{Complejidad P punto}\qquad\ \ \ \ \qquad\qquad\ \ \ 

\begin{enumerate}
\item Let $n\in\omega.$ The set $R_{n}\subseteq\mathcal{P}\left(
\omega\right)  ^{n+1}$ consisting of all $\left(  A_{0},...,A_{n}\right)  $
such that $A_{0}\cap...\cap A_{n}$ is infinite, is $G_{\delta}.$

\item The set $R\subseteq\mathcal{P}\left(  \omega\right)  ^{\omega}$
consisting of all functions $F$ such that \textsf{im}$\left(  F\right)  $ is
centered, is $G_{\delta}.$
\end{enumerate}
\end{lemma}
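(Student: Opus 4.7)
The plan is to express both defining conditions as countable intersections of open sets in the natural product topology. The key point, already recorded in the Notation section, is that for each $m \in \omega$ the subbasic sets $\langle m \rangle_{1} = \{A \subseteq \omega \mid m \in A\}$ are clopen in $\mathcal{P}(\omega)$, and that coordinate projections out of any product $\mathcal{P}(\omega)^{X}$ are continuous.

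For part (1), I would translate the condition ``$A_{0} \cap \ldots \cap A_{n}$ is infinite'' into the equivalent statement that for every $k \in \omega$ there exists $m > k$ with $m \in A_{i}$ for all $i \leq n$. For a fixed pair $(k,m)$ with $m > k$, the set $\{(A_{0}, \ldots, A_{n}) \mid m \in A_{i} \text{ for all } i \leq n\}$ is clopen in $\mathcal{P}(\omega)^{n+1}$, being a finite intersection of clopen conditions placed in the appropriate coordinates. Taking the union over all $m > k$ produces an open set $U_{k}$, and then $R_{n} = \bigcap_{k \in \omega} U_{k}$ is $G_{\delta}$.

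For part (2), I would rewrite centeredness as a countable conjunction: \textsf{im}$(F)$ is centered if and only if for every finite $s \in [\omega]^{<\omega}$, the intersection $\bigcap_{i \in s} F(i)$ is infinite. For each such $s$, the evaluation map $F \mapsto (F(i))_{i \in s}$ from $\mathcal{P}(\omega)^{\omega}$ to the appropriate finite power of $\mathcal{P}(\omega)$ is continuous, so the preimage of the $G_{\delta}$ set produced by part (1) is itself $G_{\delta}$. Intersecting these preimages over the countable set $[\omega]^{<\omega}$ preserves the $G_{\delta}$ property, yielding $R$.

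No real obstacle presents itself — the lemma is routine descriptive-set-theoretic bookkeeping. The only items worth double-checking are continuity of the coordinate projections and the clopenness of the membership atoms $\langle m \rangle_{1}$, both of which are immediate from the product topology on $\mathcal{P}(\omega)$.
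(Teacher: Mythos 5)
Your argument is correct and is exactly the routine computation the paper has in mind (the paper explicitly leaves this lemma to the reader): write ``infinite intersection'' as $\forall k\,\exists m>k$ with $m$ in every coordinate, observe each such atom is clopen, and reduce part (2) to part (1) via the continuous finite-coordinate evaluation maps, using that a countable intersection of $G_{\delta}$ sets is $G_{\delta}$. No gaps; the only cosmetic point is that a tuple $(A_{0},\dots,A_{n})$ lives in $\mathcal{P}(\omega)^{n+1}$, as your write-up correctly has it.
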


\qquad\qquad\ \ 

We will need the following:

\begin{lemma}
Let $\left(  \mathcal{B},\mathcal{S}\right)  $ be a multiple $\mathfrak{d}%
$-pathway and $\left\langle M_{0},...,M_{n}\right\rangle $ a $\delta
$-increasing sequence of models of $\mathcal{S}$ and $m\leq n$ be the least
such that $\delta_{m}=\delta_{n}.$ For every $i\leq n,$ assume $F_{i}%
:\omega\longrightarrow\left[  \omega\right]  ^{\omega}\in M_{i}$ is
$\subseteq$-decreasing$.$

\qquad\ \ \ \qquad\ \ \ \ \ \ \ \ \qquad\ \ 

If $\left\{  F_{0}^{f_{\delta_{0}}},...,F_{m-1}^{f_{\delta_{m-1}}}\right\}
\cup$ \textsf{im}$\left(  F_{m}\right)  \cup...\cup$ \textsf{im}$\left(
F_{n}\right)  $ is centered, then $%
{\textstyle\bigcap\limits_{i\leq n}}
F_{i}^{f_{\delta_{i}}}$ is infinite. \label{Lema pathways para Ppunto}
\end{lemma}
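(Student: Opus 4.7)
The plan is to apply the multiple $\mathfrak{d}$-pathway property directly to $\langle M_{0},\dots ,M_{n}\rangle $ with a single carefully chosen projective $\leq ^{\ast }$-adequate relation defined on a Polish space. The critical observation that makes this possible is that, although the $f_{\delta _{i}}$'s for $i<m$ do not lie in $M_{i}$, they \emph{do} lie in $M_{n}$, so they are legitimate parameters of a relation in $M_{n}$: for $i<m$ the definition of $m$ gives $\delta _{i}<\delta _{n}=M_{n}\cap \omega _{1}$; since $M_{n}\cap \omega _{1}$ is an initial segment of $\omega _{1}$, we have $\delta _{i}\in M_{n}$; combined with $\mathcal{B}\in M_{n}$, elementarity now yields $f_{\delta _{i}}=\mathcal{B}(\delta _{i})\in M_{n}$.

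On the Polish space $(\mathcal{P}(\omega )^{\omega })^{n+1}\times \omega ^{\omega }$ I would then define $R(G_{0},\dots ,G_{n},g)$ to hold precisely when the following implication is true: \emph{if} $g$ is increasing, each $G_{i}$ is $\subseteq $-decreasing into $[\omega ]^{\omega }$, and $\{G_{0}^{f_{\delta _{0}}},\dots ,G_{m-1}^{f_{\delta _{m-1}}}\}\cup \textsf{im}(G_{m})\cup \dots \cup \textsf{im}(G_{n})$ is centered, \emph{then} $\bigcap _{i<m}G_{i}^{f_{\delta _{i}}}\cap \bigcap _{j=m}^{n}G_{j}^{g}$ is infinite. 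By the previous paragraph $R\in M_{n}$, and by Lemma \ref{Complejidad P punto} the centeredness and infinity conditions are $G_{\delta }$, so $R$ is Borel and hence projective.

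The main verification is that $R$ is $\leq ^{\ast }$-adequate. Fix $G_{0},\dots ,G_{n}$ and assume the three conditions in the premise of $R$ hold (otherwise $R$ is trivially satisfied by every $g$). Put $B=\bigcap _{i<m}G_{i}^{f_{\delta _{i}}}$ and define $G(k)=B\cap G_{m}(k)\cap \dots \cap G_{n}(k)$: this function is $\subseteq $-decreasing, and each $G(k)$ is infinite because $\{B,G_{m}(k),\dots ,G_{n}(k)\}$ is a finite subfamily of the centered family in the premise. Lemma \ref{adecuada P punto} supplies a control $f$ such that $G^{g}$ is infinite whenever $g$ is increasing and $g\nleq ^{\ast }f$; the straightforward computation $G^{g}\subseteq B\cap \bigcap _{j\geq m}G_{j}^{g}$ then gives the conclusion of $R$.

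Finally, applying the multiple $\mathfrak{d}$-pathway property (via the Polish-space extension from the preceding lemma) to $\langle M_{0},\dots ,M_{n}\rangle $ with this $R$ and $x_{i}=F_{i}\in M_{i}$ produces $R(F_{0},\dots ,F_{n},f_{\delta _{n}})$; since the lemma's hypothesis is precisely the premise of $R$, we obtain $\bigcap _{i<m}F_{i}^{f_{\delta _{i}}}\cap \bigcap _{j\geq m}F_{j}^{f_{\delta _{n}}}$ infinite, and because $\delta _{j}=\delta _{n}$ for $j\geq m$ this is exactly $\bigcap _{i\leq n}F_{i}^{f_{\delta _{i}}}$. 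The only genuinely subtle step is the initial elementarity argument placing $f_{\delta _{0}},\dots ,f_{\delta _{m-1}}$ inside $M_{n}$; after that, the proof is routine bookkeeping around Lemmas \ref{adecuada P punto} and \ref{Complejidad P punto}.
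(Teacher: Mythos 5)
Your proof is correct and follows essentially the same route as the paper: you define the same Borel relation (phrased as an implication rather than the paper's equivalent disjunction ``not centered, or the intersection is infinite''), verify it lies in $M_{n}$ via $f_{\delta_{0}},\dots,f_{\delta_{m-1}}\in M_{n}$, check $\leq^{\ast}$-adequacy through Lemma \ref{adecuada P punto}, and apply the multiple $\mathfrak{d}$-pathway property. The only difference is that you spell out details the paper leaves implicit (the elementarity argument for $f_{\delta_{i}}\in M_{n}$ and the explicit construction of the control function), which is fine.
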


\begin{proof}
Define the relation $R\subseteq\left(  \mathcal{P}\left(  \omega\right)
^{\omega}\right)  ^{n+1}\times\omega^{\omega}$ where $R(G_{0},...,G_{n},f)$
holds in case one of the following conditions is true:

\begin{enumerate}
\item $\left\{  G_{0}^{f_{\delta_{0}}},...,G_{m-1}^{f_{\delta_{m-1}}}\right\}
\cup$ \textsf{im}$\left(  G_{m}\right)  \cup...\cup$ \textsf{im}$\left(
G_{n}\right)  $ is not centered.

\item $G_{0}^{f_{\delta_{0}}}\cap...\cap G_{m-1}^{f_{\delta_{m-1}}}\cap
G_{m}^{f}\cap...\cap G_{n}^{f}$ is infinite.
\end{enumerate}

\qquad\ \ \ 

Since $f_{\delta_{0}},...,f_{\delta_{m-1}}\in M_{n},$ we get that $R\in
M_{n}.$ By Lemma \ref{Complejidad P punto} (or rather by its proof), the first
clause is an $F_{\sigma}$ condition and the second one is $G_{\delta},$ so $R$
is both $F_{\sigma\delta}$ and $G_{\delta\sigma},$ although we only care that
it is Borel. Moreover, it is $\leq^{\ast}$-adequate by Lemma
\ref{adecuada P punto} and Proposition \ref{Escala}. The conclusion follows
since $\left(  \mathcal{B},\mathcal{S}\right)  $ is a multiple $\mathfrak{d}$-pathway.
\end{proof}

\qquad\ \ \qquad\ \ 

We can now prove:

\begin{theorem}
If there is a multiple $\mathfrak{d}$-pathway, then there is a \textsf{P}%
-point. \label{Teorema P punto}
\end{theorem}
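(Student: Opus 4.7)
The plan is to build the \textsf{P}-point $\mathcal{U}$ as the filter generated by a family $\{H_\alpha : \alpha < \mathfrak{c}\}$ constructed by transfinite recursion, where $H_\alpha = G_\alpha^{f_{\delta_{M_\alpha}}}$ for some $M_\alpha \in \mathcal{S}$ containing all prior data and a $\subseteq$-decreasing $G_\alpha : \omega \to [\omega]^{\omega}$ in $M_\alpha$. I would fix an enumeration $\{A_\alpha : \alpha < \mathfrak{c}\}$ of $\mathcal{P}(\omega)$ together with a bookkeeping function scheduling, at each stage, a countable subfamily of the filter constructed so far. At stage $\alpha$ the function $G_\alpha$ will be designed so that $H_\alpha$ is almost contained in every member of the scheduled countable subfamily, delivering the \textsf{P}-point property once the bookkeeping catches every countable subfamily of the eventual $\mathcal{U}$, and so that $H_\alpha$ lies entirely inside either $A_\alpha$ or $\omega\setminus A_\alpha$, which settles ultrafilterness.

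The central step is to check that $\{H_\alpha : \alpha < \mathfrak{c}\}$ is centered. Given finitely many indices I would rearrange them so the models $M_{\alpha_0}, \ldots, M_{\alpha_n}$ form a $\delta$-increasing sequence, let $m$ be the least index where the top height $\delta_n$ appears, and invoke Lemma \ref{Lema pathways para Ppunto}. The hypothesis of that lemma---centeredness of $\{H_{\alpha_0}, \ldots, H_{\alpha_{m-1}}\} \cup \textsf{im}(G_{\alpha_m}) \cup \cdots \cup \textsf{im}(G_{\alpha_n})$---becomes my inductive invariant. To maintain it at stage $\alpha$, I would enumerate inside $M_\alpha$ three streams of data: the precomputed sets $H_\beta$ for $\beta$ with $\delta_{M_\beta}<\delta_{M_\alpha}$, the entire images $\textsf{im}(G_\beta)$ for $\beta$ with $\delta_{M_\beta}=\delta_{M_\alpha}$, and the scheduled obligation together with the binary choice about $A_\alpha$. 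Using elementarity of $M_\alpha$, these would be merged into a single $\subseteq$-decreasing sequence, which becomes $G_\alpha$.

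The hard part will be the bookkeeping for models sharing the same height $\delta$. Lemma \ref{Lema pathways para Ppunto} is insistent that such models contribute the entire image $\textsf{im}(G_\beta)$, and not merely the committed set $H_\beta$, to the centeredness hypothesis. Consequently the step at stage $\alpha$ must have access to the full sequences $G_\beta$ for all prior models of height $\delta_{M_\alpha}$, which is precisely why the construction has to take place inside an elementary submodel $M_\alpha$ containing the whole prior record; this is also the reason the lemma's hypothesis splits at the index $m$. Once this bookkeeping is in place the \textsf{P}-point property follows immediately: any countable subfamily of $\mathcal{U}$ appears, by design, as the obligation at some stage $\alpha$, and the corresponding $H_\alpha \in \mathcal{U}$ is its pseudointersection.
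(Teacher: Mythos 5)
You have the right basic shape --- pseudointersections of the form $G^{f_{\delta_{M}}}$ with $G\in M$, glued together by Lemma \ref{Lema pathways para Ppunto} --- and you correctly identified the crux: for models of the same height the lemma demands centeredness of the \emph{entire images} $\mathsf{im}(G_{\beta})$, not just of the committed sets $H_{\beta}$. But your mechanism for handling this is where the proof breaks. A multiple $\mathfrak{d}$-pathway supplies only $\omega_{1}$ many heights $\delta_{M}$, while your recursion has length $\mathfrak{c}$, and the only interesting case is $\mathfrak{c}>\omega_{1}$ (under \textsf{CH} the theorem is vacuous). Hence uncountably many stages $\beta$ must share a single height $\delta$, and a \emph{countable} model $M_{\alpha}$ of height $\delta$ cannot contain, as elements, the functions $G_{\beta}$ (nor the sets $H_{\beta}$) for all earlier $\beta$ of that height. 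So the step ``enumerate inside $M_{\alpha}$ \ldots\ the entire images $\mathsf{im}(G_{\beta})$ for $\beta$ with $\delta_{M_{\beta}}=\delta_{M_{\alpha}}$ \ldots\ and merge'' cannot be performed, and your inductive invariant --- that $\{H_{\alpha_{0}},\ldots,H_{\alpha_{m-1}}\}\cup\mathsf{im}(G_{\alpha_{m}})\cup\ldots\cup\mathsf{im}(G_{\alpha_{n}})$ is centered for \emph{every} finite selection --- is not maintained: two same-height stages far apart in the recursion have no way to see each other. (A secondary symptom of the same problem: if you try to repair this by forcing the heights to be non-decreasing along the recursion, they become eventually constant, and then the countable subfamilies produced on that final segment can never be diagonalized.)

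The fix, which is the actual content of the paper's proof, is to stratify by height rather than recurse set-by-set. The recursion runs over $\delta\in D=\{\delta_{M}\mid M\in\mathcal{S}\}$ (length $\omega_{1}$), and at stage $\delta$ one admits \emph{simultaneously} all sets $F^{f_{\delta}}$ for which $F\in M$ with $\delta_{M}=\delta$ and, crucially, $\mathsf{im}(F)\subseteq\mathcal{A}_{<\delta}$, where $\mathcal{A}_{<\delta}$ is a centered family already fixed at strictly earlier heights. Same-height functions then never need to know about one another: their images all lie in the common centered pool $\mathcal{A}_{<\delta}$, which is exactly what the hypothesis of Lemma \ref{Lema pathways para Ppunto} requires, while the lower-height contributions enter as already-completed members of $\mathcal{U}_{<\delta}$. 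Ultrafilterness is obtained not from a $\mathfrak{c}$-enumeration of $\mathcal{P}(\omega)$ but by taking each $\mathcal{A}_{\delta}$ maximal (Zorn) among centered subfamilies of $W_{\delta}$ compatible with $\mathcal{U}_{\delta}$; and the \textsf{P}-point property needs no bookkeeping: one proves $\mathcal{U}=\mathcal{A}$, so any $\subseteq$-decreasing $F\colon\omega\to\mathcal{U}$ has $\mathsf{im}(F)\subseteq\mathcal{A}_{<\delta}$ for a suitable $\delta$ and lies in a single model of height $\delta$ by stationarity of $\mathcal{S}$, whence $F^{f_{\delta}}\in\mathcal{U}$.
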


\begin{proof}
Fix $\left(  \mathcal{B},\mathcal{S}\right)  $ a multiple $\mathfrak{d}%
$-pathway. Define $D=\left\{  \delta_{M}\mid M\in\mathcal{S}\right\}  $ and
for every $\delta\in D,$ let $W_{\delta}=\cup\left\{  \mathcal{P}\left(
\omega\right)  \cap M\mid M\in\mathcal{S}\wedge\delta_{M}\leq\delta\right\}
.$ By recursion over $\delta\in D,$ we will define families $\mathcal{U}%
_{\delta},$ $\mathcal{A}_{\delta}$ and $\mathcal{P}_{\delta}$ with the
following properties:

\begin{enumerate}
\item $\mathcal{U}_{\delta},$ $\mathcal{P}_{\delta}$ and $\mathcal{A}_{\delta
}$ are subsets of $\left[  \omega\right]  ^{\omega}.$

\item $\mathcal{U}_{\gamma}\subseteq\mathcal{U}_{\delta}$ and $\mathcal{A}%
_{\gamma}\subseteq\mathcal{A}_{\delta}$ for $\gamma\in D\cap\delta.$

\item $\mathcal{A}_{\delta}\subseteq W_{\delta}.$

\item $\mathcal{P}_{\delta}$ is the collection of all $F^{f_{\delta}}$ for
which there is $M\in\mathcal{S}$ with $\delta_{M}=\delta,$ $F:\omega
\longrightarrow\mathcal{A}_{<\delta}$ is $\subseteq$-decreasing and belongs to
$M$ (where $\mathcal{A}_{<\delta}=%
{\textstyle\bigcup\limits_{\xi\in D\cap\delta}}
\mathcal{A}_{\xi}$).

\item $\mathcal{U}_{\delta}=%
{\textstyle\bigcup}
\{\mathcal{P}_{\gamma}\mid\gamma\in D\cap\left(  \delta+1\right)  \}.$

\item $\mathcal{U}_{\delta}\cup\mathcal{A}_{\delta}$ is centered.

\item $\mathcal{A}_{\delta}$ is $\subseteq$-maximal with respect to points 3
and 6.
\end{enumerate}

\qquad\qquad\ \ 

Assume we are at step $\delta\in D$ and $\mathcal{U}_{\gamma},$ $\mathcal{A}%
_{\gamma}$ and $\mathcal{P}_{\gamma}$ have been defined for all $\gamma\in
D\cap\delta.$ In case $\delta$ is the minimum of $D,$ we have $\mathcal{U}%
_{\delta}=\mathcal{P}_{\delta}=\emptyset.$ Choose $\mathcal{A}_{\delta
}\subseteq W_{\delta}$ any maximal centered set extending the Fr\'{e}chet
filter. Now consider the case where $\delta$ is not the least member of $D.$
Note that $\mathcal{U}_{\delta}$ and $\mathcal{P}_{\delta}$ are defined from
$\mathcal{A}_{<\delta},$ so we only need to find $\mathcal{A}_{\delta}.$
Define $\mathcal{U}_{<\delta}=%
{\textstyle\bigcup\limits_{\xi\in D\cap\delta}}
\mathcal{U}_{\xi}$ and note that $\mathcal{U}_{<\delta}\cup\mathcal{A}%
_{<\delta}$ is centered, since (by the recursion hypothesis) it is an
increasing union of centered sets. We now prove the following:

\begin{claim}
$\mathcal{U}_{\delta}\cup\mathcal{A}_{<\delta}$ is centered.
\end{claim}

\qquad\ \ \ 

Let $B_{0},...,B_{n}\in\mathcal{U}_{\delta}\cup\mathcal{A}_{<\delta}$, for
every $i\leq n,$ we find $M_{i}\in\mathcal{S}$ and $F_{i}\in M_{i}$ in the
following way:

\begin{enumerate}
\item In case $B_{i}\in\mathcal{A}_{<\delta},$ choose $M_{i}$ for which
$\delta_{i}=\delta_{M_{i}}<\delta$ and $B_{i}\in M_{i}.$ Let $F_{i}%
:\omega\longrightarrow\left[  \omega\right]  ^{\omega}$ be the constant
function with value $B_{i}.$

\item If $B_{i}\in\mathcal{U}_{\delta},$ choose $M_{i}$ with $\delta
_{i}=\delta_{M_{i}}\leq\delta$ and $F_{i}:\omega\longrightarrow\mathcal{A}%
_{<\delta_{i}}\in M_{i}$ that is $\subseteq$-decreasing and $B_{i}%
=F_{i}^{f_{\delta_{i}}}.$
\end{enumerate}

\qquad\ \ 

It might be possible that for some $i\leq n$ both clauses apply (in other
words, $B_{i}\in\mathcal{U}_{\delta}\cap\mathcal{A}_{<\delta}$). If that is
the case, we can choose to follow either one of them. For each $i\leq n,$ we
have the following:

\begin{enumerate}
\item $B_{i}=F_{i}^{f_{\delta_{i}}}.$

\item $F_{i}\in M_{i}$, is $\subseteq$-decreasing and \textsf{im}$\left(
F_{i}\right)  \subseteq$ $\mathcal{A}_{<\delta}.$
\end{enumerate}

\qquad\ \ 

By taking a reenumeration and possibly picking more elements of $\mathcal{U}%
_{\delta}\cup\mathcal{A}_{<\delta},$ we may assume that $\left\langle
M_{0},...,M_{n}\right\rangle $ is $\delta$-increasing and $\delta_{n}=\delta.$
Let $m\leq n$ be the least such that $\delta_{m}=\delta.$ We claim that
$H=\left\{  F_{0}^{f_{\delta_{0}}},...,F_{m-1}^{f_{\delta_{m-1}}}\right\}
\cup$ \textsf{im}$\left(  F_{m}\right)  \cup...\cup$ \textsf{im}$\left(
F_{n}\right)  $ is contained in $\mathcal{U}_{<\delta}\cup\mathcal{A}%
_{<\delta}.$ Pick $i\leq n.$ We have the following cases:

\begin{enumerate}
\item If $B_{i}\in\mathcal{A}_{<\delta},$ then $F_{i}^{\delta_{i}}=B_{i}$ so
there is nothing to do.

\item If $B_{i}\in\mathcal{U}_{\delta}$ and $i<m,$ then $F_{i}:\omega
\longrightarrow\mathcal{A}_{<\delta_{i}}$ which implies that $F_{i}%
^{f_{\delta_{i}}}\in\mathcal{U}_{\delta_{i}}\subseteq\mathcal{U}_{\delta}.$

\item If $B_{i}\in\mathcal{U}_{\delta}$ and $m\leq i,$ there is nothing to do
since we already noted that \textsf{im}$\left(  F_{i}\right)  \subseteq$
$\mathcal{A}_{<\delta}.$
\end{enumerate}

\qquad\ \ \ \qquad\ \ 

We already pointed out that $\mathcal{U}_{<\delta}\cup\mathcal{A}_{<\delta}$
is centered, so $H$ is centered as well. We are now in position to invoke
Lemma \ref{Lema pathways para Ppunto} and conclude that $%
{\textstyle\bigcap\limits_{i\leq n}}
F_{i}^{f_{\delta_{i}}}$ is infinite. Since $B_{i}=F_{i}^{f_{\delta_{i}}},$
this finishes the proof of the claim. We can now apply Zorn's Lemma and find
$\mathcal{A}_{\delta}\subseteq W_{\delta}$ extending $\mathcal{A}_{<\delta},$
such that its union with $\mathcal{U}_{\delta}$ is centered and it is maximal
with these properties.

\qquad\qquad\qquad

After completing the recursion, define $\mathcal{U=}%
{\textstyle\bigcup\limits_{\delta\in D}}
\mathcal{U}_{\delta}$ and $\mathcal{A=}%
{\textstyle\bigcup\limits_{\delta\in D}}
\mathcal{A}_{\delta}.$ We will now prove the following:

\begin{claim}
\qquad\ \ \ 

\begin{enumerate}
\item $\mathcal{U\cup A}$ is centered.

\item $\mathcal{U}=\mathcal{A}.$

\item $\mathcal{U}$ is an ultrafilter.

\item $\mathcal{U}$ is a \textsf{P}-point.
\end{enumerate}
\end{claim}

\qquad\ \ \ 

To see the first point, simply note that $\mathcal{U\cup A}$ is the increasing
union of the sets $\mathcal{U}_{\delta}\cup\mathcal{A}_{\delta}$ and since we
already knew those are centered, it follows that $\mathcal{U\cup A}$ is
centered as well. We will now prove $\mathcal{U\subseteq A}.$ Let
$B\in\mathcal{U},$ find $\delta\in D$ such that $B\in W_{\delta}$ (recall that
$\mathcal{S}$ is stationary). Since $B\in\mathcal{U},$ clearly $\mathcal{U}%
_{\delta}\cup\left\{  B\right\}  \cup\mathcal{A}_{\delta}$ is centered$.$ By
the maximality of $\mathcal{A}_{\delta}$ that $B\in\mathcal{A}_{\delta}.$ We
will now show that $\mathcal{A\subseteq U}.$ Let $A\in\mathcal{A}$ and
$\delta\in D$ such that $A\in\mathcal{A}_{\delta}.$ Since $\mathcal{S}$ is
stationary, we can find $\gamma\in D$ and $M\in\mathcal{S}$ such that
$\delta_{M}=\gamma$ and $\delta,A\in M$ (so $\delta<\gamma$). Let $F$ be the
constant sequence with value $A.$ We get that $A=F^{f_{\gamma}}\in
\mathcal{P}_{\gamma},$ so $B\in\mathcal{U}.$

\qquad\qquad\qquad\qquad

It is time to prove that $\mathcal{U}$ is an ultrafilter. Let
$A,B,C,E\subseteq\omega$ such that $A,B\in\mathcal{U}$ and $A\subseteq C.$
Find $\delta\in D$ and $M\in\mathcal{S}$ such that $\delta_{M}=\delta$ and
$A,B,C,E\in M.$ In this way, $A\cap B,C,E$ and $\omega\setminus E$ are in
$W_{\delta}.$ Since $\mathcal{U=A}$ is centered, it follows that
$\mathcal{U}_{\delta}\cup\left\{  A\cap B,C\right\}  \cup\mathcal{A}_{\delta}$
is centered. By the maximality of $\mathcal{A}_{\delta},$ we get that $A\cap
B,C\in\mathcal{A}_{\delta}.$ Moreover, either $\mathcal{U}_{\delta}%
\cup\mathcal{A}_{\delta}\cup\left\{  E\right\}  $ is centered or
$\mathcal{U}_{\delta}\cup\mathcal{A}_{\delta}\cup\left\{  \omega\setminus
E\right\}  $ is centered, so the maximality of $\mathcal{A}_{\delta}$ entails
that $E\in\mathcal{A}_{\delta}$ or $\omega\setminus E\in\mathcal{A}_{\delta}.$
Finally, recall that in the first step of the recursion we made sure that
$\mathcal{U}$ contains the Fr\'{e}chet filter.

\qquad\qquad\qquad

It remains to prove that $\mathcal{U}$ is a \textsf{P}-point. Pick
$F:\omega\longrightarrow\mathcal{U}$ that is $\subseteq$-decreasing$.$ For
every $n\in\omega,$ choose $\delta_{n}\in D$ such that $F\left(  n\right)
\in\mathcal{A}_{\delta_{n}}.$ We now find $\delta\in D$ and $M\in\mathcal{S}$
such that $\delta_{M}=\delta$ and $F,\left\{  \delta_{n}\mid n\in
\omega\right\}  \in M.$ In this way, $F\in M$ and it is a decreasing sequence
of elements of $\mathcal{A}_{<\delta},$ so $F^{f_{\delta}}\in\mathcal{P}%
_{\delta}\subseteq\mathcal{U}.$
\end{proof}

\section{A strong P-point from a multiple $\mathfrak{d}$-pathway
\label{strong Ppoint}}

A \textsf{P}-point is an ultrafilter for which we can diagonalize against
countably many of its elements, while a strong \textsf{P}-point is an
ultrafilter for which we can diagonalize against countably many of its compact
subsets. Since every singleton is compact, it follows that strong
\textsf{P}-points are \textsf{P}-points. It is not difficult to show that
Ramsey ultrafilters are never strong \textsf{P}-points. The mere existence of
a \textsf{P}-point is not enough to imply either the existence of a strong
\textsf{P}-point, nor of a \textsf{P}-point that is not strong\footnote{Note
that a weak \textsf{P}-point is not the same as a \textsf{P}-point that is not
strong.}. In the Miller model every \textsf{P}-point is strong, while in
Shelah's model of only one Ramsey ultrafilter (see \cite{ProperandImproper}),
every \textsf{P}-point is Ramsey, and therefore not strong. Naturally, both
types of \textsf{P}-points exist under \textsf{CH }(\textsf{cov}$\left(
\mathcal{M}\right)  =\mathfrak{c}$ is enough). In \cite{HrusakVerner}, Blass,
Hru\v{s}\'{a}k and Verner proved that strong \textsf{P}-points are precisely
the ultrafilters whose Mathias forcing does not add dominating reals. This
makes them very useful for constructing models where the bounding number is
small, while other invariants like the splitting number or variants of the
almost disjointness number are large (see \cite{BrendleRaghavan},
\cite{OnCardinalInvariantsoftheContinuum}, \cite{ProperandImproper},
\cite{TotallyMAD}, \cite{MadFamiliesSplittingFamiliesandLargeContinuum},
\cite{MobandMAD}, \cite{CanjarFilters}, \cite{CanjarFiltersII} or
\cite{uanda}). Strong \textsf{P}-points were introduced by Laflamme in
\cite{LaflammeCompleteCombinatorics} with the purpose of studying the
collection of all $F_{\sigma}$ filters as a forcing notion. We will obtain a
strong \textsf{P}-point from a multiple $\mathfrak{d}$-pathway.

\begin{definition}
An ultrafilter $\mathcal{U}$ on $\omega$ is a \emph{strong P-point} if for
every sequence $\left\langle \mathcal{C}_{n}\right\rangle _{n\in\omega}$ of
compact subsets of $\mathcal{U},$ there is a partition $P=\left\{  P_{n}\mid
n\in\omega\right\}  $ of $\omega$ into intervals such that the set
$\{A\subseteq\omega\mid\forall n\in\omega\exists X_{n}\in\mathcal{C}%
_{n}\left(  A\cap P_{n}=X_{n}\cap P_{n}\right)  \}$ is contained in
$\mathcal{U}.$
\end{definition}

\qquad\ \ 

For convenience, we will denote \textsf{fin }$=\left[  \omega\right]
^{<\omega}\setminus\left\{  \emptyset\right\}  .$ Given an ultrafilter
$\mathcal{F}$ on $\omega,$ we define the filter $\mathcal{F}^{<\omega}$ on
\textsf{fin }that is generated by $\{\left[  A\right]  ^{<\omega}%
\setminus\left\{  \emptyset\right\}  \mid A\in\mathcal{F}\}.$ It is not hard
to see for $X\subseteq$ \textsf{fin, }we have that $X\in\left(  \mathcal{F}%
^{<\omega}\right)  ^{+}$ if and only if for every $A\in\mathcal{F},$ there is
$s\in X$ such that $s\subseteq A.$ The following theorem combines results from
Blass, Chodounsk\'{y}, Hru\v{s}\'{a}k, Minami, Repov\v{s}\textsf{, }Verner and
Zdomskyy (see \cite{HrusakVerner},
\cite{MathiasPrikryandLaverPrikryTypeForcing} and
\cite{MathiasForcingandCombinatorialCoveringPropertiesofFilters}). We will
only need the equivalence between 1) and 3).

\begin{theorem}
Let $\mathcal{U}$ be an ultrafilter on $\omega.$ The following are equivalent:
\label{Equivalencias strong ppoint}

\begin{enumerate}
\item $\mathcal{U}$ is a strong \textsf{P}-point.

\item The Mathias forcing of $\mathcal{U}$ does not add dominating reals.

\item $\mathcal{U}^{<\omega}$ is a \textsf{P}$^{+}$ filter, this means that
every countable $\subseteq$-decreasing sequence contained in $\left(
\mathcal{U}^{<\omega}\right)  ^{+}$ has a pseudointersection in $\left(
\mathcal{U}^{<\omega}\right)  ^{+}.$

\item $\mathcal{U}$ is a Menger subset of $\mathcal{P}\left(  \omega\right)
.$
\end{enumerate}
\end{theorem}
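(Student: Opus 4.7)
The plan is to concentrate on the equivalence $1 \Leftrightarrow 3$, since this is what the subsequent sections actually invoke; the remaining equivalences ($1 \Leftrightarrow 2$ due to Hru\v{s}\'ak--Verner, $1 \Leftrightarrow 4$ the Menger reformulation) can then be slotted in by appealing to the cited papers. The technical heart is a correspondence between compact subsets of $\mathcal{U}$ (viewed inside $\mathcal{P}(\omega)$ with its Cantor topology) and $\left(\mathcal{U}^{<\omega}\right)^+$-large subsets of \textsf{fin}.

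First I would associate to a compact $\mathcal{C} \subseteq \mathcal{U}$ the tree $T(\mathcal{C}) \subseteq 2^{<\omega}$ of initial segments of characteristic functions of members of $\mathcal{C}$, and prove the bridge lemma: for each $n$, the set of nonempty supports $S_n(\mathcal{C}) = \{\sigma^{-1}(1) : \sigma \in T(\mathcal{C}),\ |\sigma| \geq n\}$ belongs to $\left(\mathcal{U}^{<\omega}\right)^+$. This follows from compactness together with the ultrafilter property: for any $A \in \mathcal{U}$, the continuous map $X \mapsto X \cap A$ sends $\mathcal{C}$ into the infinite subsets of $A$, so some initial trace $X \cap k$ (with $k \geq n$) of an $X \in \mathcal{C}$ is a nonempty subset of $A$.

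For $3 \Rightarrow 1$, given a sequence $\langle \mathcal{C}_n \rangle$ of compact subsets of $\mathcal{U}$, I would form the $\subseteq$-decreasing chain $Y_n = \bigcap_{k \leq n} S_n(\mathcal{C}_k)$ in $\left(\mathcal{U}^{<\omega}\right)^+$, apply the \textsf{P}$^+$ hypothesis to obtain a pseudointersection $Z$, and greedily pick mutually disjoint $s_n \in Z$ with $\max(s_n) < \min(s_{n+1})$. The intervals $P_n$ spanning the $s_n$ (completed to a partition of $\omega$) do the job: for any $A \in \mathcal{U}$ containing all the $s_n$, the tree structure provides, in each block, a witness $X_n \in \mathcal{C}_n$ with $X_n \cap P_n = A \cap P_n$. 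For $1 \Rightarrow 3$, given a $\subseteq$-decreasing sequence $\langle Y_n \rangle$ in $\left(\mathcal{U}^{<\omega}\right)^+$, build compact $\mathcal{C}_n \subseteq \mathcal{U}$ as the closure in $\mathcal{P}(\omega)$ of the set of countable unions of pairwise disjoint members of $Y_n$ that happen to lie in $\mathcal{U}$; apply the strong \textsf{P}-point property and read a pseudointersection of $\langle Y_n \rangle$ off the resulting interval partition block by block.

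The main obstacle is the fine print of the bridge lemma. In the backward direction, the compact sets $\mathcal{C}_n$ must genuinely land inside $\mathcal{U}$ rather than merely in $\mathcal{P}(\omega)$, which requires a careful closure argument using that $\mathcal{U}$ is an ultrafilter. In the forward direction, the recovery of per-block witnesses $X_n \in \mathcal{C}_n$ cannot use compactness or the ultrafilter property in isolation but both together, and one must verify that the greedy choice of $s_n$ produces a partition whose blocks admit such witnesses for every sufficiently large $A \in \mathcal{U}$.
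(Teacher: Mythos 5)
You should first note that the paper does not prove this theorem at all: it is quoted as a combination of results of Blass, Chodounsk\'{y}, Hru\v{s}\'{a}k, Minami, Repov\v{s}, Verner and Zdomskyy, with references to the cited papers, and only the equivalence of 1) and 3) is used later. A self-contained proof would therefore be welcome, but your sketch has a genuine gap at its core: the ``bridge lemma'' is false. You claim that for a compact $\mathcal{C}\subseteq\mathcal{U}$ the family $S_n(\mathcal{C})=\{\sigma^{-1}(1):\sigma\in T(\mathcal{C}),\ |\sigma|\geq n\}$, i.e.\ the family of nonempty traces $X\cap k$ with $X\in\mathcal{C}$ and $k\geq n$, belongs to $\left(\mathcal{U}^{<\omega}\right)^{+}$. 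Take $\mathcal{C}=\{X\}$ for a single $X\in\mathcal{U}$ with $0\in X$, and let $A=X\setminus\{0\}\in\mathcal{U}$; then no nonempty trace $X\cap k$ is a subset of $A$, so $S_n(\mathcal{C})\notin\left(\mathcal{U}^{<\omega}\right)^{+}$. The fact that $X\cap A$ is infinite for every $X\in\mathcal{C}$ does not produce an initial trace of $X$ contained in $A$. What compactness actually gives is the dual statement: for every $A\in\mathcal{U}$ there is $k$ such that every $X\in\mathcal{C}$ meets $A\cap k$ (cover $\mathcal{C}$ by the basic open sets $\left\langle m\right\rangle _{1}$ with $m\in A$ and take a finite subcover). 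Hence the correct positive set attached to $\mathcal{C}$ is the family of finite \emph{hitting sets} $\{s\in\textsf{fin}\mid\forall X\in\mathcal{C}\left(  X\cap s\neq\emptyset\right)\}$, which is exactly the correspondence $X\mapsto\mathcal{C}\left(  X\right)$ of Lemma \ref{Lema positivo ultrafiltro}, not the family of traces.

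The same confusion propagates to both directions. In $3\Rightarrow1$ you argue that ``for any $A\in\mathcal{U}$ containing all the $s_{n}$'' one can find witnesses $X_{n}\in\mathcal{C}_{n}$; but the definition of strong \textsf{P}-point requires the converse inclusion: every $A$ that agrees with some member of $\mathcal{C}_{n}$ on each block $P_{n}$ must be shown to lie in $\mathcal{U}$. With the hitting-set bridge this is immediate: if $t_{n}\subseteq P_{n}$ meets every member of $\mathcal{C}_{n}$ and $\{t_{n}\mid n\in\omega\}\in\left(\mathcal{U}^{<\omega}\right)^{+}$, then any such $A$ satisfies $A\cap t_{n}=X_{n}\cap t_{n}\neq\emptyset$, so $A\in\mathcal{C}\left(  \{t_{n}\mid n\in\omega\}\right)\subseteq\mathcal{U}$ by Lemma \ref{Lema positivo ultrafiltro}; the \textsf{P}$^{+}$ hypothesis is what produces such $t_{n}$ inside a single interval partition. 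In $1\Rightarrow3$ your compact sets (closures of unions of members of $Y_{n}$ ``that happen to lie in $\mathcal{U}$'') need not be contained in $\mathcal{U}$ --- the closure in $\mathcal{P}\left(  \omega\right)$ of a subset of $\mathcal{U}$ easily escapes $\mathcal{U}$ --- whereas $\mathcal{C}_{n}=\mathcal{C}\left(  Y_{n}\right)$ is compact and contained in $\mathcal{U}$ precisely because $Y_{n}$ is positive, again by Lemma \ref{Lema positivo ultrafiltro}. The architecture of your argument is repairable, but only after replacing traces by hitting sets throughout.
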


\qquad\ \ 

We now need to prove some results regarding compact sets that will be used later.

\begin{definition}
Let $X\subseteq$ \textsf{fin. }Define:

\begin{enumerate}
\item $\mathcal{C}\left(  X\right)  =\left\{  A\subseteq\omega\mid\forall s\in
X\left(  A\cap s\neq\emptyset\right)  \right\}  $.

\item $T_{X}\subseteq2^{<\omega}$ is the set consisting of all $s\in
2^{<\omega}$ such that for every $u\in X,$ if $u\subseteq$ \textsf{dom}%
$\left(  s\right)  ,$ then $u\cap s^{-1}\left(  \left\{  1\right\}  \right)
\neq\emptyset.$
\end{enumerate}
\end{definition}

\qquad\ \ 

It is easy to see that $\mathcal{C}\left(  X\right)  $ is a compact subset of
$\mathcal{P}\left(  \omega\right)  ,$ $T_{X}$ is a well-pruned tree and
$\mathcal{C}\left(  X\right)  =\left[  T_{X}\right]  $ (we are identifying a
set with its characteristic function). \ The relevance of these notions is the
following lemma, which can be found in \cite{uanda} or \cite{CanjarFiltersII}:

\begin{lemma}
Let $\mathcal{U}$ be an ultrafilter on $\omega$ and $X\subseteq$ \textsf{fin.
}$X\in\left(  \mathcal{U}^{<\omega}\right)  ^{+}$ if and only if
$\mathcal{C}\left(  X\right)  \subseteq\mathcal{U}.$
\label{Lema positivo ultrafiltro}
\end{lemma}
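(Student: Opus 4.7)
The plan is to prove both implications by contrapositive, using duality between $\mathcal{U}$ and its dual ideal (i.e., the fact that $A\notin\mathcal{U}$ iff $\omega\setminus A\in\mathcal{U}$) together with the characterization of $(\mathcal{U}^{<\omega})^+$ stated just before the lemma: $X\in(\mathcal{U}^{<\omega})^+$ if and only if for every $A\in\mathcal{U}$ there is some $s\in X$ with $s\subseteq A$.

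First I would prove the forward direction. Assume $X\in(\mathcal{U}^{<\omega})^+$ and let $A\in\mathcal{C}(X)$; I want $A\in\mathcal{U}$. If not, then $\omega\setminus A\in\mathcal{U}$, so by the positivity characterization there exists $s\in X$ with $s\subseteq\omega\setminus A$. But then $A\cap s=\emptyset$, contradicting $A\in\mathcal{C}(X)$.

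For the reverse direction, assume $\mathcal{C}(X)\subseteq\mathcal{U}$ and suppose toward a contradiction that $X\notin(\mathcal{U}^{<\omega})^+$. Then there is $A\in\mathcal{U}$ such that no $s\in X$ satisfies $s\subseteq A$. Equivalently, every $s\in X$ meets $\omega\setminus A$, which is precisely the statement that $\omega\setminus A\in\mathcal{C}(X)$. By hypothesis this places $\omega\setminus A$ in $\mathcal{U}$, contradicting $A\in\mathcal{U}$ and the fact that $\mathcal{U}$ is a filter.

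There is no real obstacle here; the lemma is essentially an unpacking of definitions combined with the fact that $\mathcal{U}$ is an ultrafilter (used in the forward direction to pass from $A\notin\mathcal{U}$ to $\omega\setminus A\in\mathcal{U}$). The only mild subtlety is keeping track of the quantifier flip: membership in $\mathcal{C}(X)$ says every $s\in X$ meets $A$, while failure of positivity of $X$ produces a witness set $A\in\mathcal{U}$ no $s\in X$ fits inside; these two statements become equivalent after replacing $A$ by $\omega\setminus A$, which is exactly where ultrafilterhood is used.
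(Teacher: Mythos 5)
Your argument is correct: both directions follow from the stated characterization of $\left(\mathcal{U}^{<\omega}\right)^{+}$ together with the complementation property of the ultrafilter, and the quantifier bookkeeping is handled properly. The paper does not prove this lemma itself (it cites \cite{uanda} and \cite{CanjarFiltersII}), and your proof is exactly the routine verification those references supply.
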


\qquad\ \ \ \ \ 

We now prove the following:

\begin{lemma}
Let $X,Y\subseteq$ \textsf{fin. }The following are equivalent:

\begin{enumerate}
\item $\mathcal{C}\left(  X\right)  \cup\mathcal{C}\left(  Y\right)  $ is centered.

\item For every $n,m,l\in\omega$ and every $s_{1},...,s_{n}\in\left(
T_{X}\right)  _{m}$, $z_{1},...,z_{n}\in\left(  T_{Y}\right)  _{m}$ there is
$k>m,l$ such that for every $\overline{s}_{1},...,\overline{s}_{n}\in\left(
T_{X}\right)  _{k}$, $\overline{z}_{1},...,\overline{z}_{n}\in\left(
T_{Y}\right)  _{k}$ for which $s_{i}\subseteq\overline{s}_{i}$ and
$z_{i}\subseteq\overline{z}_{i}$ for every $i\leq n,$ there is $u\in\lbrack
l,k)$ such that $\overline{s}_{i}\left(  u\right)  =\overline{z}_{i}\left(
u\right)  =1$ for every $i\leq n.$
\end{enumerate}
\end{lemma}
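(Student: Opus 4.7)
The plan is to handle the two directions separately: $(2) \Rightarrow (1)$ is quite direct, while $(1) \Rightarrow (2)$ requires a K\"onig-style compactness argument on a product tree.

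For $(2) \Rightarrow (1)$, I would take finitely many $A_1, \ldots, A_p \in \mathcal{C}(X) = [T_X]$ and $B_1, \ldots, B_q \in \mathcal{C}(Y) = [T_Y]$ and any $m \in \omega$, set $n = \max(p, q)$, pad the shorter list by duplication, and apply (2) with $s_i = A_i \upharpoonright m$ and $z_j = B_j \upharpoonright m$ to obtain $k > m$ that forces value $1$ at position $k$ across every level-$(k+1)$ extension of each $s_i$ or $z_j$ in the respective tree. In particular, the specific extensions $A_i \upharpoonright (k+1)$ and $B_j \upharpoonright (k+1)$ must take value $1$ at $k$, so $k$ lies in $A_1 \cap \cdots \cap A_p \cap B_1 \cap \cdots \cap B_q$; since $m$ was arbitrary, this intersection is infinite.

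For $(1) \Rightarrow (2)$, I argue by contrapositive. First, observe that $T_X$ and $T_Y$ are pruned: any node extends by appending $1$'s, since every $u \in X$ fitting into the extended domain but not the original one is necessarily hit by the new $1$. Form the finitely-branching pruned product tree $\mathcal{T}$ whose level-$j$ nodes (for $j \geq m$) are tuples $(\sigma_1, \ldots, \sigma_n, \tau_1, \ldots, \tau_n)$ with $\sigma_i \in (T_X)_j$ extending $s_i$ and $\tau_i \in (T_Y)_j$ extending $z_i$. If (2) fails for some $n, m, s_i, z_i$, then for every $k > m$ there is a level-$(k+1)$ \emph{bad} node of $\mathcal{T}$: one whose value at position $k$ is $0$ in some coordinate. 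The target is to extract an infinite branch $(A_1, \ldots, A_n, B_1, \ldots, B_n) \in [\mathcal{T}]$ whose intersection $A_1 \cap \cdots \cap A_n \cap B_1 \cap \cdots \cap B_n$ is finite; this would give $2n$ members of $\mathcal{C}(X) \cup \mathcal{C}(Y)$ with finite intersection, contradicting (1).

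The extraction proceeds by passing to the subtree of nodes that persistently extend to bad witnesses at cofinitely many later levels; a pigeonhole argument using the finite branching shows this subtree is infinite at every level, and K\"onig's lemma then provides the desired branch. The hard part is the combinatorial bookkeeping here: bad witnesses for distinct $k$'s need not agree on low-level initial segments, so one must argue carefully that the persistently bad nodes still form an infinite tree before invoking K\"onig. The finite branching of $\mathcal{T}$, inherited from $T_X, T_Y \subseteq 2^{<\omega}$, together with the prunedness observation above, are precisely what make the compactness argument run.
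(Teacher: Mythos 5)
Your direction $(2)\Rightarrow(1)$ is correct and is the same one\-/line argument the paper has in mind. The real issue is in $(1)\Rightarrow(2)$, and it sits exactly at the step you flag and then defer. The failure of (2) hands you one bad tuple at each level $k+1$; your ``persistently bad'' subtree consists of nodes that extend to such bad tuples at cofinitely many later levels. Even granting that this subtree is infinite and that K\"onig's lemma yields a branch through it, every node of that branch merely \emph{extends to} a bad tuple somewhere above it --- the branch itself need not pass through a bad tuple at infinitely many levels, let alone cofinitely many, and it is badness of the branch at cofinitely many $k$ that you need to conclude that $A_{1}\cap\cdots\cap A_{n}\cap B_{1}\cap\cdots\cap B_{n}$ is finite. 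The bad witnesses for distinct $k$ can be pairwise incompatible, in which case no branch collects more than boundedly many of them, and no amount of pruning or pigeonholing repairs this.

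In fact the implication $(1)\Rightarrow(2)$ is false as stated, so the bookkeeping you postpone cannot be carried out. Take $X=Y=\left[\omega\right]^{2}$. Then $\mathcal{C}\left(X\right)=\left\{ A\subseteq\omega\mid\left\vert \omega\setminus A\right\vert \leq1\right\}$, so $\mathcal{C}\left(X\right)\cup\mathcal{C}\left(Y\right)$ consists of cofinite sets and is centered; thus (1) holds. But (2) fails already for $n=1$, $m=0$: for every $k>0$ the sequence of length $k+1$ consisting of $k$ ones followed by a single zero lies in $\left(T_{X}\right)_{k+1}$ (its zero set is the singleton $\left\{ k\right\}$, which contains no pair) and takes the value $0$ at $k$. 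Here every level carries a bad node, yet every branch of $T_{X}$ has at most one zero, so all finite intersections are cofinite and the branch you aim to extract does not exist. You are in good company: the paper's own proof is the same K\"onig argument and makes the same leap, asserting that the set $S$ of bad tuples ``has infinite height'' when the hypothesis only gives that $S$ meets every level. The statement itself has to be repaired --- compactness of $\mathcal{C}\left(X\right)^{n}\times\mathcal{C}\left(Y\right)^{n}$ yields the correct reformulation ``for every $n,m$ there is a level $N>m$ such that every tuple of level\-/$N$ nodes extending the given ones has a common value $1$ at some position in $\left[m,N\right)$'', which is still $G_{\delta}$ and is what Corollary \ref{complejidad strong ppoint} actually needs --- before either your argument or the paper's can be made to work.
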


\begin{proof}
It is easy to see that 2) implies 1). Now assume that $\mathcal{C}\left(
X\right)  \cup\mathcal{C}\left(  Y\right)  $ is centered. For each $i\leq n,$
define $S_{i}=\{A\in\mathcal{C}\left(  X\right)  \mid A\cap m=s_{i}%
^{-1}\left(  \left\{  1\right\}  \right)  \}$ and $Z_{i}=\{B\in\mathcal{C}%
\left(  X\right)  \mid B\cap m=z_{i}^{-1}\left(  \left\{  1\right\}  \right)
\}.$ These are compact subsets of $\mathcal{P}\left(  \omega\right)  .$ We now
consider the compact space $W=S_{1}\times...\times S_{n}\times Z_{1}%
\times...\times Z_{n}.$ For $u\geq l,$ define $V_{u}=\{\left(  A_{1}%
,...,A_{n},B_{1},...,B_{n}\right)  \in W\mid u\in A_{1}\cap...\cap A_{n}\cap
B_{1}\cap...\cap B_{n}\}.$ Each $V_{u}$ is open. Moreover, since
$\mathcal{C}\left(  X\right)  \cup\mathcal{C}\left(  Y\right)  $ is centered,
it follows that $\left\{  V_{u}\mid u\geq l\right\}  $ is an open cover of
$W.$ Since $W$ is compact, we can find $r\in\omega$ such that $\left\{
V_{l},...,V_{r}\right\}  $ is an open cover of $W.$ We claim that $k=r+1$ is
as desired. Indeed, let $\overline{s}_{1},...,\overline{s}_{n}\in\left(
T_{X}\right)  _{k}$ and $\overline{z}_{1},...,\overline{z}_{n}\in\left(
T_{Y}\right)  _{k}$ for which $s_{i}\subseteq\overline{s}_{i}$ and
$z_{i}\subseteq\overline{z}_{i}$ for every $i\leq n.$ Pick $\left(
A_{1},...,A_{n},B_{1},...,B_{n}\right)  \in W$ such that $A_{i}\cap
k=\overline{s}_{i}^{-1}\left(  \left\{  1\right\}  \right)  $ and $B\cap
m=\overline{z}_{i}^{-1}\left(  \left\{  1\right\}  \right)  $ for every $i\leq
n$ (this is possible since the trees are well-pruned). In this way, there is
$u\in\lbrack l,k)$ such that $\left(  A_{1},...,A_{n},B_{1},...,B_{n}\right)
\in V_{u},$ which implies the desired conclusion.
\end{proof}

\qquad\qquad\qquad\ \ \ \qquad\ \ \ \ \qquad\ \ 

With the lemma (or rather its natural generalization to any finite number of
subsets of \textsf{fin}), we get:

\begin{corollary}
Let $n\in\omega$ and define $R\subseteq\mathcal{P}\left(  \left[
\omega\right]  ^{<\omega}\right)  ^{n}$ as the set of all $\left(
X_{1},...,X_{n}\right)  $ such that $%
{\textstyle\bigcup\limits_{i\leq n}}
\mathcal{C}\left(  X_{i}\right)  $ is centered. $R$ is a $G_{\delta}$
relation. \label{complejidad strong ppoint}
\end{corollary}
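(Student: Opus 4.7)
The approach is to first generalize the preceding lemma from two compact sets to $n$, and then read off the complexity from the resulting characterization. Specifically, by the same K\"{o}nig's Lemma argument, $\bigcup_{i \leq n} \mathcal{C}(X_i)$ is centered if and only if for every $m, N \in \omega$ and every choice of $s_{i,j} \in (T_{X_i})_m$ (for $i \leq n$ and $j \leq N$), there exists $k > m$ such that whenever $\overline{s}_{i,j} \in (T_{X_i})_{k+1}$ extends $s_{i,j}$, we have $\overline{s}_{i,j}(k) = 1$ for all $i, j$. The proof is verbatim that of the preceding lemma: if the right-hand side fails, the finitely branching tree of ``bad'' extension tuples has infinite height, and any cofinal branch furnishes a finite subfamily of $\bigcup_{i \leq n} \mathcal{C}(X_i)$ with finite intersection.

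With this equivalence in hand, I would write $R = \bigcap_{(m, N, \vec{s})} U_{m, N, \vec{s}}$, where the intersection runs over the countably many parameters $m, N \in \omega$ and tuples $\vec{s} = (s_{i,j})_{i \leq n,\, j \leq N}$ with each $s_{i,j} \in 2^m$, and $U_{m, N, \vec{s}}$ is the set of those $(X_1, \ldots, X_n)$ for which \emph{either} $s_{i,j} \notin T_{X_i}$ for some pair $(i,j)$, \emph{or} there exists $k > m$ witnessing the extension condition above. A countable intersection of open sets is $G_{\delta}$, so it suffices to verify that each $U_{m, N, \vec{s}}$ is open in the product topology on $\mathcal{P}([\omega]^{<\omega})^n$.

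The essential observation here is that for any fixed $s \in 2^{<\omega}$, the condition ``$s \in T_{X_i}$'' depends only on which specific finite subsets of \textsf{dom}$(s)$ belong to $X_i$, so it is a clopen condition in the natural product topology on $\mathcal{P}([\omega]^{<\omega})$. Consequently, for each fixed $k$, the inner clause inside the existential defining $U_{m, N, \vec{s}}$ is a finite Boolean combination of clopen conditions on the $X_i$, hence clopen; combined with the ``not in the tree'' disjunct and the countable union over $k$, this makes $U_{m, N, \vec{s}}$ open. I do not foresee any substantive obstacle: the generalization of the previous lemma is a routine parameter change in the K\"{o}nig's Lemma argument, and the complexity computation is pure bookkeeping once one has noted that tree membership is clopen in $X_i$.
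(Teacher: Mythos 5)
Your route is the paper's route: the paper offers no separate proof of this corollary beyond the remark ``with the lemma (or rather its generalization), we get\dots'', and your elaboration --- generalize the two-set lemma to $n$ sets with $N$ nodes apiece, observe that for fixed $s$ the condition $s\in T_{X}$ depends only on $X\cap\mathcal{P}(\mathsf{dom}(s))$ and is therefore clopen, and write $R$ as a countable intersection of the resulting open sets $U_{m,N,\vec{s}}$ --- is exactly the intended argument, and the topological bookkeeping in it is correct.

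There is, however, a genuine problem in the combinatorial characterization you adopt (it is inherited verbatim from the paper's lemma, and it does not disappear in the generalization). As stated, the condition demands a single level $k$ at which \emph{every} tuple of extensions takes the value $1$ in every coordinate; equivalently, $k$ must lie in the intersection of \emph{all} branches of the trees passing through the chosen nodes. Centeredness only constrains \emph{finite} subfamilies, and the two are not the same. Concretely, partition $\omega$ into consecutive intervals $I_n$ with $\left\vert I_n\right\vert =n^{2}$ and let $X=\bigcup_{n\geq2}\left[  I_n\right]  ^{n+1}$; then $\mathcal{C}\left(  X\right)  =\{A\mid\left\vert I_n\setminus A\right\vert \leq n\text{ for all }n\geq2\}$ is centered (any $p$ of its members must meet inside every $I_n$ with $n>p$), yet for every $k$ the string of length $k+1$ that is $1$ everywhere except at $k$ belongs to $T_{X}$, so no level $k$ works even for the root node. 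Your equivalence therefore fails, and the $G_{\delta}$ set you exhibit is strictly smaller than $R$. (Relatedly, the set $S$ of ``bad'' extension tuples used in the K\"{o}nig's Lemma step is not closed under restriction, so it is not actually a tree.) The repair is to weaken the inner clause: require only that for each tuple of extensions $\overline{s}_{i,j}\in\left(  T_{X_{i}}\right)  _{k+1}$ there be some $k^{\prime}$ with $m\leq k^{\prime}\leq k$, allowed to depend on the tuple, at which all $\overline{s}_{i,j}\left(  k^{\prime}\right)  =1$. This condition is equivalent to centeredness: the forward direction is as before, and for the converse the collection of tuples of extensions admitting no common $1$-level above $m$ is genuinely closed under restriction, so K\"{o}nig's Lemma produces an infinite branch and hence finitely many members of $\bigcup_{i}\mathcal{C}\left(  X_{i}\right)  $ whose intersection is contained in $m$. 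For each fixed $k$ the corrected inner clause is still a finite Boolean combination of clopen conditions on the $X_{i}$, so your complexity computation goes through verbatim and the corollary stands.
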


We now introduce the following notion:

\begin{definition}
For a function $F:\omega$ $\longrightarrow$ $\mathcal{P(}$\textsf{fin}%
$\mathsf{)}$ and $g\in\omega^{\omega},$ we define the set $F^{g}=%
{\textstyle\bigcup\limits_{n\in\omega}}
F\left(  n\right)  \cap\mathcal{P}\left(  g\left(  n\right)  \right)  .$
\end{definition}

If $F$ is $\subseteq$-decreasing, then $F^{g}$ is a pseudointersection of
\textsf{im}$\left(  F\right)  .$ Note that if $f\leq g,$ then $F^{f}\subseteq
F^{g}.$ It is trivial to see that if $F$ is the constant function with value
$A\subseteq$ \textsf{fin} and $f$ is increasing, then $F^{f}=A.$ Lastly, if
$B\subseteq\omega,$ then $\mathcal{C(}\left[  B\right]  ^{1})$ consists of all
supersets of $B.$ The following lemma has appeared in \cite{uanda} (Lemma 57)
and in \cite{CanjarFiltersII} (Lemma 3):

\begin{lemma}
Let $\mathcal{F}$ be a filter, $\mathcal{D\subseteq F}$ compact and
$X_{1},...,X_{n}\subseteq$ $\mathcal{P(}$\textsf{fin}$\mathsf{)}$ with
$\mathcal{C}\left(  X_{1}\right)  ,...,\mathcal{C}\left(  X_{n}\right)
\subseteq\mathcal{F}.$ For every $i\leq n,$ there is $Y_{i}\in\left[
X_{i}\right]  ^{<\omega}$ such that for every $F\in\mathcal{D}$ and $A_{i}%
^{1},....,A_{i}^{n}\in\mathcal{C}\left(  Y_{i}\right)  ,$ we have that $F\cap%
{\textstyle\bigcap\limits_{i,j\leq n}}
A_{i}^{j}\neq\emptyset.$ \label{Lema para adecuada strong p point}
\end{lemma}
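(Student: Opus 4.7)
The plan is to argue by contradiction using the compactness of $2^{\omega}$. Suppose no such finite subsets $Y_{1},\ldots,Y_{n}$ exist. Fix enumerations $X_{i}=\{s_{i,0},s_{i,1},\ldots\}$ and let $Y_{i}^{m}=\{s_{i,0},\ldots,s_{i,m-1}\}$. Then for every $m\in\omega$ there exist $F_{m}\in\mathcal{D}$ and sets $A_{i,m}^{j}\in\mathcal{C}(Y_{i}^{m})$ (for $i,j\leq n$) witnessing that $Y_{1}^{m},\ldots,Y_{n}^{m}$ fail, i.e.\ $F_{m}\cap\bigcap_{i,j\leq n}A_{i,m}^{j}=\emptyset$.

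Since $2^{\omega}$ is compact and metrizable, I may pass to a subsequence along which each of the finitely many sequences $(F_{m})$ and $(A_{i,m}^{j})$ converges in $\mathcal{P}(\omega)$; let the limits be $F$ and $A_{i}^{j}$. Because $\mathcal{D}$ is compact, hence closed, $F\in\mathcal{D}\subseteq\mathcal{F}$. To see that $A_{i}^{j}\in\mathcal{C}(X_{i})$, fix any $s\in X_{i}$; then $s\in Y_{i}^{m}$ for all sufficiently large $m$, so $A_{i,m}^{j}\cap s\neq\emptyset$ eventually, and since the condition $A\cap s\neq\emptyset$ is clopen in $2^{\omega}$ (it only looks at the finitely many coordinates indexed by $s$), it is preserved in the limit. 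Hence $A_{i}^{j}\in\mathcal{C}(X_{i})\subseteq\mathcal{F}$.

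Now $F$ together with the $n^{2}$ sets $A_{i}^{j}$ are finitely many elements of the filter $\mathcal{F}$, so their intersection belongs to $\mathcal{F}$ and in particular is nonempty. Pick $k$ in this intersection; then $k\in F$ forces $k\in F_{m}$ eventually, and each $k\in A_{i}^{j}$ forces $k\in A_{i,m}^{j}$ eventually. For any $m$ past all of these finitely many thresholds, $k$ witnesses $F_{m}\cap\bigcap_{i,j}A_{i,m}^{j}\neq\emptyset$, contradicting the choice of those sets. The only subtle step is the verification $A_{i}^{j}\in\mathcal{C}(X_{i})$, which crucially uses that the $Y_{i}^{m}$ exhaust $X_{i}$; once that is in hand, everything else is a routine diagonal compactness argument.
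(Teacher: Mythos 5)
Your proof is correct. Note that the paper does not actually prove this lemma: it is stated with a pointer to the literature (the papers cited as \cite{uanda} and \cite{CanjarFiltersII}), so there is no in-paper argument to compare against. Your argument is a clean self-contained substitute: you negate the statement along an increasing exhaustion of each $X_{i}$ by finite sets, extract convergent subsequences of the finitely many witness sequences in the compact metrizable space $2^{\omega}$, and check that the limits land in $\mathcal{D}$ and in the $\mathcal{C}\left(  X_{i}\right)  $ respectively --- the latter because $\left\{  A\mid A\cap s\neq\emptyset\right\}  $ is clopen for each fixed $s\in$ \textsf{fin} and each $s$ eventually belongs to $Y_{i}^{m}$. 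The finite intersection of the limit sets lies in the filter $\mathcal{F}$, hence is nonempty, and any point of it propagates back to large $m$ by the definition of convergence in the product topology, giving the contradiction. All steps check out (including the implicit use that \textsf{fin} is countable, so the exhaustion exists, and that compact subsets of a Hausdorff space are closed, so the limit of the $F_{m}$ stays in $\mathcal{D}$). The standard proofs in the cited sources run an open-cover compactness argument over $\mathcal{D}$ instead of a sequential one, but the two are interchangeable here since $\mathcal{P}\left(  \omega\right)  $ is metrizable; your version is, if anything, more elementary.
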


It will be convenient to introduce the following notion:

\begin{definition}
Let $\mathcal{A}$ be a family of compact subsets of $\mathcal{P}\left(
\omega\right)  .$ We say that $\mathcal{A}$ \emph{is compatible }if $%
{\textstyle\bigcup}
\mathcal{A}$ is centered.
\end{definition}

\qquad\qquad\ \ 

We can now prove the following:

\begin{lemma}
Let $\mathcal{D\subseteq P}\left(  \omega\right)  $ be a compact set,
$n\in\omega$ and $F_{i}:\omega$ $\longrightarrow$ $\mathcal{P(}$%
\textsf{fin}$\mathsf{)}$ for each $i\leq n$ such that $\left\{  \mathcal{D}%
\right\}  \cup\left\{  \mathcal{C}\left(  F_{i}\left(  k\right)  \right)  \mid
i\leq n\wedge k\in\omega\right\}  $ is compatible. There is $h\in
\omega^{\omega}$ such that for every increasing $g\in\omega^{\omega},$ if
$g\nleq^{\ast}h,$ then $\mathcal{D\cup C}\left(  F_{0}^{g}\right)  \cup
...\cup$ $\mathcal{C}\left(  F_{n}^{g}\right)  $ is compatible.
\label{adecuada para strong ppoint}
\end{lemma}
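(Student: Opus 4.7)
My plan is to construct $h$ stage-by-stage, applying Lemma \ref{Lema para adecuada strong p point} at each stage $m$ to a compact set obtained from $\mathcal{D}$ by removing a growing initial segment; this will force the witnesses produced at successive stages to live above increasingly large thresholds, turning the single-witness output of the lemma into an infinite supply of common elements.

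I would fix $L_m = m$ and, for each $m \in \omega$, let $N_m = \max(n+1,\, m+1)$ and define
\[
\mathcal{D}_m^\star = \{(D_1 \cap \cdots \cap D_{N_m}) \setminus L_m : D_1, \dots, D_{N_m} \in \mathcal{D}\},
\]
which is compact as the continuous image of $\mathcal{D}^{N_m}$. Because centeredness is preserved by taking finite intersections and by removing finite initial segments, $\mathcal{D}_m^\star \cup \{\mathcal{C}(F_i(k)) : i \leq n,\, k \in \omega\}$ remains centered, and I let $\mathcal{F}_m$ be the proper filter generated by this family together with the Fr\'echet filter. At stage $m$ I would apply Lemma \ref{Lema para adecuada strong p point} to $\mathcal{F}_m$, the compact set $\mathcal{D}_m^\star$, and the $N_m$ families $X_i^m = F_{\min(i,\,n)}(m)$ for $0 \leq i < N_m$ (each of which satisfies $\mathcal{C}(X_i^m) \subseteq \mathcal{F}_m$). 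The lemma produces finite $Y_i^m \subseteq X_i^m$ for each $i < N_m$, and I set
\[
h(m) = 1 + \max \bigcup_{i < N_m} \bigcup Y_i^m,
\]
so that every $s \in Y_i^m$ is a subset of $h(m)$.

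For the verification, assume $g$ is increasing with $g \nleq^{\ast} h$, and fix finite $D_1, \dots, D_p \in \mathcal{D}$ together with $A^i_1, \dots, A^i_q \in \mathcal{C}(F_i^g)$ for each $i \leq n$; I must show $(\bigcap_l D_l) \cap (\bigcap_{i,j} A^i_j)$ is infinite. For any $m \geq \max(p, q, n+1)$ with $g(m) > h(m)$, every $s \in Y_i^m$ satisfies $s \subseteq g(m)$, so $Y_i^m \subseteq F_{\min(i,n)}(m) \cap \mathcal{P}(g(m)) \subseteq F_{\min(i,n)}^g$, whence $\mathcal{C}(F_{\min(i,n)}^g) \subseteq \mathcal{C}(Y_i^m)$ and in particular each $A^i_j \in \mathcal{C}(Y_i^m)$ for $i \leq n$. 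I then invoke Lemma \ref{Lema para adecuada strong p point} with $F = (D_1 \cap \cdots \cap D_p) \setminus L_m \in \mathcal{D}_m^\star$ (padded by repetitions of $D_p$ to reach $N_m$ terms) and fill the slots beyond the $n+1$ families or the $q$ choices per family with $\omega \in \mathcal{C}(Y_i^m)$ (valid since every element of $Y_i^m \subseteq $ \textsf{fin} is a nonempty finite set). The conclusion produces $x_m \in F \cap \bigcap A^i_j = \bigl((D_1 \cap \cdots \cap D_p) \setminus L_m\bigr) \cap \bigcap_{i \leq n,\, j \leq q} A^i_j$, so $x_m \geq L_m$. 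Since $L_m \to \infty$, the collection $\{x_m : m \geq \max(p,q,n+1),\, g(m) > h(m)\}$ cannot be finite, yielding infinitely many common elements of the prescribed intersection.

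The main obstacle is that Lemma \ref{Lema para adecuada strong p point} delivers only a single witness per invocation, whereas centeredness of $\mathcal{D} \cup \bigcup_{i \leq n} \mathcal{C}(F_i^g)$ requires an infinite common intersection for every finite subcollection. The key trick — subtracting the growing initial segment $L_m$ from the elements of $\mathcal{D}$ at stage $m$, repackaged as the compact set $\mathcal{D}_m^\star$ — forces the stage-$m$ witness to lie past $L_m$, so as $L_m \to \infty$ the witnesses are automatically distinct and assemble into the required infinite supply. A secondary bookkeeping nuisance is that the parameter $n$ in Lemma \ref{Lema para adecuada strong p point} simultaneously bounds the number of families and the number of per-family choices; this is handled by letting $N_m$ grow unboundedly, padding the family list with repetitions of $F_n(m)$, padding the $D$-intersection with repetitions of $D_p$, and filling extra $A$-slots with $\omega$.
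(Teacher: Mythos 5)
Your proof is correct and runs on the same engine as the paper's: both iterate Lemma \ref{Lema para adecuada strong p point} once per stage $m$, with the arity growing in $m$ so that a single application covers arbitrarily many sets drawn from $\mathcal{D}$ and arbitrarily many elements drawn from each $\mathcal{C}\left(  F_{i}^{g}\right)  $, and both define $h\left(  m\right)  $ so that the finite sets $Y_{i}^{m}$ produced at stage $m$ are contained in $F_{i}\left(  m\right)  \cap\mathcal{P}\left(  h\left(  m\right)  \right)  $, whence $\mathcal{C}\left(  F_{i}^{g}\right)  \subseteq\mathcal{C}\left(  Y_{i}^{m}\right)  $ whenever $g\left(  m\right)  >h\left(  m\right)  .$ The one genuine difference is how non-emptiness of the stage-$m$ witness is upgraded to infinitude of the final intersection. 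The paper first normalizes so that $F_{i}\left(  k\right)  \subseteq\left[  \omega\setminus k\right]  ^{<\omega}$; then membership in $\mathcal{C}\left(  F_{i}\left(  m\right)  \cap\mathcal{P}\left(  h\left(  m\right)  \right)  \right)  $ is insensitive to deleting an initial segment of the $A$'s, so the witness can be pushed above any prescribed bound from the $\mathcal{C}$-side. You instead leave the $F_{i}$ untouched and push from the $\mathcal{D}$-side, replacing $\mathcal{D}$ at stage $m$ by the compact set $\mathcal{D}_{m}^{\star}$ of $N_{m}$-fold intersections with the initial segment $m$ removed, so the witness automatically lies above $m.$ Your variant has a small advantage: it sidesteps the paper's \textquotedblleft we may assume\textquotedblright\ step, whose preservation of the hypothesis is not entirely free (shrinking $F_{i}\left(  k\right)  $ enlarges $\mathcal{C}\left(  F_{i}\left(  k\right)  \right)  ,$ so the centeredness of $\left\{  \mathcal{D}\right\}  \cup\left\{  \mathcal{C}\left(  F_{i}\left(  k\right)  \right)  \right\}  $ must be rechecked). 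The price is the extra bookkeeping of $\mathcal{D}_{m}^{\star}$ and the padding by repeated $D_{p}$'s and by $\omega$ (valid since every element of \textsf{fin} is non-empty), all of which you handle correctly; the only loose end is the degenerate case $\mathcal{D}=\emptyset,$ which both you and the paper ignore and which is harmless in the intended application.
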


\begin{proof}
We may assume that $F_{i}\left(  k\right)  \subseteq\left[  \omega\setminus
k\right]  ^{<\omega}$ for every $k\in\omega$ and $i\leq n.$ With the aid of
Lemma \ref{Lema para adecuada strong p point}, we can find an increasing
$h\in\omega^{\omega}$ such that for every $m\in\omega$ the following holds:
For every $E_{0},...,E_{m}\in\mathcal{D}$ and $A_{i}^{1},...,A_{i}^{m}\in
F_{m}\left(  i\right)  \cap\mathcal{P}\left(  h\left(  m\right)  \right)  $
for every $i\leq n,$ we have that $E_{0}\cap...\cap E_{m}\cap%
{\textstyle\bigcap\limits_{i\leq n,\text{ }j\leq m}}
A_{i}^{j}$ is non-empty. It is easy to see that $h$ has the desired property.
\end{proof}

\qquad\qquad\ \ 

We only need one more preliminary result:

\begin{lemma}
Let $\left(  \mathcal{B},\mathcal{S}\right)  $ be a multiple $\mathfrak{d}%
$-pathway, $\left\langle M_{0},...,M_{n}\right\rangle $ a $\delta$-increasing
sequence of models in $\mathcal{S}$ and $m\leq n$ the least such that
$\delta_{m}=\delta_{n}.$ For every $i\leq n,$ let $F_{i}:\omega$
$\longrightarrow$ $\mathcal{P(}$\textsf{fin}$\mathsf{)}\in M_{i}$ that is
$\subseteq$-decreasing\textsf{.}

\qquad\ \qquad\ \ \ \ \ \ \ \ \ \ 

If $\{\mathcal{C(}F_{0}^{f_{\delta_{0}}}),...,\mathcal{C(}F_{m-1}%
^{f_{\delta_{m-1}}})\}\cup\left\{  \mathcal{C}\left(  F_{i}\left(  k\right)
\right)  \mid m\leq i\leq n\wedge k\in\omega\right\}  $ is compatible, then $%
{\textstyle\bigcup\limits_{i\leq n}}
\mathcal{C}(F_{i}^{f_{\delta_{i}}})$ is centered.
\label{Lema pathway strong ppoint}
\end{lemma}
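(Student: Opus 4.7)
The plan is to mirror the proof of Lemma \ref{Lema pathways para Ppunto}, replacing the $G_\delta$ ``infinite intersection'' relation by the $G_\delta$ ``centered union of compact sets'' relation provided by Corollary \ref{complejidad strong ppoint}, and using Lemma \ref{adecuada para strong ppoint} in place of Lemma \ref{adecuada P punto} to obtain $\leq^{\ast}$-adequacy.

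First I would introduce the relation $R\subseteq(\mathcal{P}(\textsf{fin})^{\omega})^{n+1}\times\omega^{\omega}$ declaring that $R(G_{0},\ldots,G_{n},f)$ holds iff at least one of the following clauses is true: either (1) the family $\{\mathcal{C}(G_{0}^{f_{\delta_{0}}}),\ldots,\mathcal{C}(G_{m-1}^{f_{\delta_{m-1}}})\}\cup\{\mathcal{C}(G_{i}(k))\mid m\leq i\leq n,\,k\in\omega\}$ fails to be centered, or (2) $\bigcup_{i<m}\mathcal{C}(G_{i}^{f_{\delta_{i}}})\cup\bigcup_{i=m}^{n}\mathcal{C}(G_{i}^{f})$ is centered. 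The parameters of $R$ are $m,n$ and the reals $f_{\delta_{0}},\ldots,f_{\delta_{m-1}}$; since $\delta_{i}<\delta_{n}=M_{n}\cap\omega_{1}$ for every $i<m$, each $\delta_{i}$ lies in $M_{n}$, and because $\mathcal{B}\in M_{n}$ we get $f_{\delta_{i}}=\mathcal{B}(\delta_{i})\in M_{n}$. Hence $R\in M_{n}$. Clause (1) is $F_{\sigma}$ and clause (2) is $G_{\delta}$ by Corollary \ref{complejidad strong ppoint}, so $R$ is Borel and in particular projective.

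For $\leq^{\ast}$-adequacy I would fix inputs $G_{0},\ldots,G_{n}$. If clause (1) already holds, any $h$ works, so suppose the displayed family in (1) is centered. Then I set $\mathcal{D}=\bigcup_{i<m}\mathcal{C}(G_{i}^{f_{\delta_{i}}})$, which is a \emph{finite} union of compact subsets of $\mathcal{P}(\omega)$ and hence compact, and I observe that $\{\mathcal{D}\}\cup\{\mathcal{C}(G_{i}(k))\mid m\leq i\leq n,\,k\in\omega\}$ is centered (it is literally the family from (1)). Lemma \ref{adecuada para strong ppoint} applied to $\mathcal{D}$ and the functions $G_{m},\ldots,G_{n}$ then yields an $h\in\omega^{\omega}$ such that whenever $g\nleq^{\ast}h$ is increasing, the family $\mathcal{D}\cup\mathcal{C}(G_{m}^{g})\cup\ldots\cup\mathcal{C}(G_{n}^{g})$ is centered; that is precisely clause (2) for~$g$.

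Finally I apply the multiple $\mathfrak{d}$-pathway property, in its Polish space form (using that $\mathcal{P}(\textsf{fin})^{\omega}$ is Polish), to the $\delta$-increasing sequence $\langle M_{0},\ldots,M_{n}\rangle$ and the inputs $F_{0},\ldots,F_{n}$, obtaining $R(F_{0},\ldots,F_{n},f_{\delta_{n}})$. The hypothesis of the lemma is exactly that clause (1) fails for the $F_{i}$'s, so clause (2) must hold; and since the sequence is $\delta$-increasing with $\delta_{m}=\delta_{n}$, we have $\delta_{i}=\delta_{n}$ for every $i\geq m$, so the conclusion of (2) coincides with $\bigcup_{i\leq n}\mathcal{C}(F_{i}^{f_{\delta_{i}}})$ being centered. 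The only delicate point is matching Lemma \ref{adecuada para strong ppoint}, which provides for a single compact set $\mathcal{D}$, to the present situation in which $m$ different already-chosen compact sets $\mathcal{C}(F_{i}^{f_{\delta_{i}}})$ need to be ``frozen''; bundling them into the single compact $\mathcal{D}$ above resolves this cleanly.
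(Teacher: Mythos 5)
Your proposal is correct and is essentially the paper's own proof: the same relation $R$ with the same two clauses, the same complexity analysis via Corollary \ref{complejidad strong ppoint}, adequacy via Lemma \ref{adecuada para strong ppoint}, and the same final appeal to the multiple $\mathfrak{d}$-pathway property. Your explicit bundling of the frozen sets $\mathcal{C}(F_{i}^{f_{\delta_{i}}})$, $i<m$, into a single compact $\mathcal{D}$ is exactly the intended (and in the paper implicit) way to match the hypotheses of Lemma \ref{adecuada para strong ppoint}.
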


\begin{proof}
Define the relation $R\subseteq(\mathcal{P(}$\textsf{fin}$)^{\omega}%
)^{n+1}\times\omega^{\omega}$ where $R\left(  G_{0},...,G_{n},f\right)  $
holds just in case one of the following conditions is met:

\begin{enumerate}
\item $\{\mathcal{C(}G_{0}^{f_{\delta_{0}}}),...,\mathcal{C(}G_{m-1}%
^{f_{\delta_{m-1}}})\}\cup\left\{  \mathcal{C(}G_{i}\left(  k\right)  )\mid
m\leq i\leq n\wedge k\in\omega\right\}  $ is not compatible.

\item $\mathcal{C(}G_{0}^{f_{\delta_{0}}})\cup...\cup\mathcal{C(}%
G_{m-1}^{f_{\delta_{m-1}}})\cup\mathcal{C(}G_{m}^{f})\cup...\cup
\mathcal{C(}G_{n}^{f})$ is centered.
\end{enumerate}

\qquad\ \ \ 

Since $f_{\delta_{0}},...,f_{\delta_{m-1}}\in M_{n},$ we conclude that $R\in
M_{n}.$ By Corollary \ref{complejidad strong ppoint}, the first clause is an
$F_{\sigma}$ condition and the second one is $G_{\delta},$ so $R$ is both
$F_{\sigma\delta}$ and $G_{\delta\sigma},$ hence it is Borel. Moreover, it is
$\leq^{\ast}$-adequate by Lemma \ref{adecuada para strong ppoint}. The
conclusion of the lemma follows since $\left(  \mathcal{B},\mathcal{S}\right)
$ is a multiple $\mathfrak{d}$-pathway.
\end{proof}

\qquad\qquad\ \ 

We now prove the main result of the section:

\begin{theorem}
If there is a multiple $\mathfrak{d}$-pathway, then there is a strong
\textsf{P}-point. \label{Teorema Strong}
\end{theorem}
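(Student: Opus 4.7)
The plan is to mirror the construction of Theorem \ref{Teorema P punto} while replacing subsets of $\omega$ with subsets of $\textsf{fin}$, and replacing centeredness of a family of subsets of $\mathcal{P}(\omega)$ with centeredness of the union $\bigcup_{X}\mathcal{C}(X)$ of the associated compact subsets of $\mathcal{P}(\omega)$. Fix a multiple $\mathfrak{d}$-pathway $(\mathcal{B},\mathcal{S})$, let $D=\{\delta_{M}\mid M\in\mathcal{S}\}$ and for each $\delta\in D$ put $W_{\delta}=\bigcup\{\mathcal{P}(\textsf{fin})\cap M\mid M\in\mathcal{S},\;\delta_{M}\leq\delta\}$. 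By recursion on $\delta\in D$ I define $\mathcal{U}_{\delta},\mathcal{A}_{\delta},\mathcal{P}_{\delta}\subseteq\mathcal{P}(\textsf{fin})$ analogously to the seven properties of Theorem \ref{Teorema P punto}: in particular, $\mathcal{P}_{\delta}=\{F^{f_{\delta}}\mid\exists M\in\mathcal{S}\,(\delta_{M}=\delta,\,F\in M,\,F\colon\omega\to\mathcal{A}_{<\delta}\text{ is }\subseteq\text{-decreasing})\}$ using the $F^{g}$ defined at the start of this section, and $\mathcal{A}_{\delta}\subseteq W_{\delta}$ is chosen maximal (extending $\mathcal{A}_{<\delta}$, and at the initial step extending $\{[\omega\setminus F]^{1}\mid F\in[\omega]^{<\omega}\}$ to ensure non-principality) subject to $\bigcup_{X\in\mathcal{U}_{\delta}\cup\mathcal{A}_{\delta}}\mathcal{C}(X)$ being centered.

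The recursive claim to verify is that $\bigcup_{X\in\mathcal{U}_{\delta}\cup\mathcal{A}_{<\delta}}\mathcal{C}(X)$ is centered. Given $X_{0},\dots,X_{n}$ in this family, I assign to each $X_{i}$ a model $M_{i}\in\mathcal{S}$ and a $\subseteq$-decreasing $F_{i}\in M_{i}$: constantly $X_{i}$ when $X_{i}\in\mathcal{A}_{<\delta}$, and the generating sequence from the definition of $\mathcal{P}_{\gamma}$ when $X_{i}\in\mathcal{U}_{\delta}$. After rearrangement making $\langle M_{0},\dots,M_{n}\rangle$ $\delta$-increasing with $\delta_{n}=\delta$, Lemma \ref{Lema pathway strong ppoint} yields that $\bigcup_{i\leq n}\mathcal{C}(F_{i}^{f_{\delta_{i}}})=\bigcup_{i\leq n}\mathcal{C}(X_{i})$ is centered, after which Zorn's Lemma produces the maximal $\mathcal{A}_{\delta}$.

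Setting $\mathcal{A}=\bigcup_{\delta\in D}\mathcal{A}_{\delta}=\bigcup_{\delta\in D}\mathcal{U}_{\delta}$, I define $\mathcal{V}=\bigcup_{X\in\mathcal{A}}\mathcal{C}(X)$ and verify $\mathcal{V}$ is an ultrafilter on $\omega$. Closure under supersets is automatic. For each $A\subseteq\omega$, choose $\delta\in D$ with $A\in W_{\delta}$; at least one of $\mathcal{U}_{\delta}\cup\mathcal{A}_{\delta}\cup\{[A]^{1}\}$ and $\mathcal{U}_{\delta}\cup\mathcal{A}_{\delta}\cup\{[\omega\setminus A]^{1}\}$ preserves centeredness of $\bigcup\mathcal{C}$, since any finite intersection of members of the already centered family splits along $A$ and $\omega\setminus A$ into two pieces at least one of which is infinite; maximality then puts $[A]^{1}$ or $[\omega\setminus A]^{1}$ into $\mathcal{A}_{\delta}$, so $\mathcal{V}$ decides $A$. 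Closure under intersection is analogous: if $A,B\in\mathcal{V}$ then $[\omega\setminus(A\cap B)]^{1}\notin\mathcal{A}$ (otherwise $A,B,\omega\setminus(A\cap B)\in\mathcal{V}$ have empty intersection, violating centeredness), so by the dichotomy $[A\cap B]^{1}\in\mathcal{A}$ and $A\cap B\in\mathcal{V}$.

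To conclude that $\mathcal{V}$ is a strong \textsf{P}-point I invoke Theorem \ref{Equivalencias strong ppoint} and show $\mathcal{V}^{<\omega}$ is a \textsf{P}$^{+}$ filter. Given a $\subseteq$-decreasing $\langle X_{n}\rangle$ with each $X_{n}\in(\mathcal{V}^{<\omega})^{+}$, Lemma \ref{Lema positivo ultrafiltro} gives $\mathcal{C}(X_{n})\subseteq\mathcal{V}$. Stationarity of $\mathcal{S}$ yields $M\in\mathcal{S}$ with $\langle X_{n}\rangle\in M$; each $X_{n}\in W_{\delta_{M}}$ and maximality forces $X_{n}\in\mathcal{A}_{\delta_{M}}$. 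Picking $M'\in\mathcal{S}$ of strictly larger height with $\langle X_{n}\rangle\in M'$, the function $F(n):=X_{n}$ is $\subseteq$-decreasing, lies in $M'$, and maps into $\mathcal{A}_{<\delta_{M'}}$, so $F^{f_{\delta_{M'}}}\in\mathcal{P}_{\delta_{M'}}\subseteq\mathcal{U}$; since $F$ is decreasing, this $F^{f_{\delta_{M'}}}$ is a pseudointersection of $\langle X_{n}\rangle$ lying in $(\mathcal{V}^{<\omega})^{+}$. The main obstacle, absent from Theorem \ref{Teorema P punto}, is verifying that $\mathcal{V}$ is an ultrafilter at all: centeredness of $\bigcup\mathcal{C}(X)$ does not by itself produce a filter on $\omega$, and the twin maximality arguments about $[A]^{1}$ and $[A\cap B]^{1}$ above are precisely what closes this gap.
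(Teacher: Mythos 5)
Your proposal is correct and follows essentially the same route as the paper: the same recursion on $\delta\in D$ with $W_{\delta}$, $\mathcal{P}_{\delta}$, $\mathcal{U}_{\delta}$, $\mathcal{A}_{\delta}$, the same appeal to Lemma \ref{Lema pathway strong ppoint} for the centeredness claim, the same $[A]^{1}$/$[\omega\setminus A]^{1}$ maximality dichotomy to get an ultrafilter, and the same use of Lemma \ref{Lema positivo ultrafiltro} together with the equivalence of (1) and (3) in Theorem \ref{Equivalencias strong ppoint}; the only differences are cosmetic (you carry the codes $X\subseteq\textsf{fin}$ where the paper carries the compact sets $\mathcal{C}(X)$, and you phrase closure under intersections via the dichotomy rather than directly). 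The one step you gloss over --- checking that $\{\mathcal{C}(F_{0}^{f_{\delta_{0}}}),\dots,\mathcal{C}(F_{m-1}^{f_{\delta_{m-1}}})\}\cup\{\mathcal{C}(F_{i}(k))\mid m\leq i\leq n,\ k\in\omega\}$ lies inside the already-centered family $\mathcal{U}_{<\delta}\cup\bigcup\mathcal{A}_{<\delta}$ before invoking Lemma \ref{Lema pathway strong ppoint} --- transfers verbatim from the \textsf{P}-point proof you cite, so this is not a substantive gap.
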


\begin{proof}
Let $\left(  \mathcal{B},\mathcal{S}\right)  $ be a multiple $\mathfrak{d}%
$-pathway. Define $D=\left\{  \delta_{M}\mid M\in\mathcal{S}\right\}  $ and
for every $\delta\in D,$ define $W_{\delta}$ as the set of all $\mathcal{C}%
\left(  X\right)  $ such that $X\subseteq$ \textsf{fin }and there is
$M\in\mathcal{S}$ such that $\delta_{M}\leq\delta$ and $X\in M.$ By recursion
over $\delta\in D,$ we will define $\mathcal{U}_{\delta},$ $\mathcal{A}%
_{\delta}$ and $\mathcal{P}_{\delta}$ such that:

\begin{enumerate}
\item $\mathcal{U}_{\delta}\subseteq\left[  \omega\right]  ^{\omega}$ while
$\mathcal{P}_{\delta}$ and $\mathcal{A}_{\delta}$ are families of compact
subsets of $\mathcal{P}\left(  \omega\right)  .$

\item $\mathcal{U}_{\gamma}\subseteq\mathcal{U}_{\delta}$ and $\mathcal{A}%
_{\gamma}\subseteq\mathcal{A}_{\delta}$ for every $\gamma\in D\cap\delta.$

\item $\mathcal{A}_{\delta}\subseteq W_{\delta}.$

\item $\mathcal{P}_{\delta}$ is the collection of all $\mathcal{C(}%
F^{f_{\delta}})$ such that $F:\omega\longrightarrow\mathcal{P(}$%
\textsf{fin}$)$\textsf{ is }$\subseteq$-decreasing and there is $M\in
\mathcal{S}$ with the property that $\delta_{M}=\delta,$ $F\in M$ and
$\mathcal{C(}F\left(  n\right)  )\in\mathcal{A}_{<\delta}$ for every
$n\in\omega$ (where $\mathcal{A}_{<\delta}=%
{\textstyle\bigcup\limits_{\gamma\in D\cap\delta}}
\mathcal{A}_{\xi}$).

\item $\mathcal{U}_{\delta}=%
{\textstyle\bigcup}
\left\{  \mathcal{P}_{\gamma}\mid\gamma\in D\cap\left(  \delta+1\right)
\right\}  .$

\item $\mathcal{U}_{\delta}\cup%
{\textstyle\bigcup}
\mathcal{A}_{\delta}$ is centered.

\item $\mathcal{A}_{\delta}$ is maximal with respect to points 3 and 6.
\end{enumerate}

\qquad\ \ \ \ \qquad\ \ \ 

Assume we are at step $\delta\in D$ and $\mathcal{U}_{\gamma},$ $\mathcal{A}%
_{\gamma}$ and $\mathcal{P}_{\gamma}$ have been defined for all $\gamma\in
D\cap\delta.$ In case $\delta$ is the minimum of $D,$ we have $\mathcal{U}%
_{\delta}=\mathcal{P}_{\delta}=\emptyset.$ Choose $\mathcal{A}_{\delta
}\subseteq W_{\delta}$ any maximal compatible set of compact sets such that $%
{\textstyle\bigcup}
\mathcal{A}_{\delta}$ extends the Fr\'{e}chet filter. Now consider the case
where $\delta$ is not the least member of $D.$ Note that $\mathcal{U}_{\delta
}$ and $\mathcal{P}_{\delta}$ are defined from $\mathcal{A}_{<\delta},$ so we
only need to find $\mathcal{A}_{\delta}.$ Define $\mathcal{U}_{<\delta}=%
{\textstyle\bigcup\limits_{\xi\in D\cap\delta}}
\mathcal{U}_{\xi}$ and note that $\mathcal{U}_{<\delta}\cup%
{\textstyle\bigcup}
\mathcal{A}_{<\delta}$ is centered, since (by the recursion hypothesis) it is
an increasing union of centered sets. We now prove the following:

\begin{claim}
$\mathcal{U}_{\delta}\cup%
{\textstyle\bigcup}
\mathcal{A}_{<\delta}$ is centered.
\end{claim}

\qquad\ \ \ 

Let $B_{0},...,B_{n}\in\mathcal{U}_{\delta}\cup%
{\textstyle\bigcup}
\mathcal{A}_{<\delta}$, for every $i\leq n,$ we find $M_{i}\in\mathcal{S}$ and
$F_{i}\in M_{i}$ in the following way:

\begin{enumerate}
\item If $B_{i}\in%
{\textstyle\bigcup}
\mathcal{A}_{<\delta},$ let $M_{i}$ such that $\delta_{i}=\delta_{M_{i}%
}<\delta$ and there is $X_{i}$\textsf{ }$\in M_{i}$ such that $B_{i}%
\in\mathcal{C}\left(  X_{i}\right)  $ and $\mathcal{C}\left(  X_{i}\right)
\in\mathcal{A}_{\delta_{i}}.$ Let $F_{i}:\omega\longrightarrow$ $\mathcal{P(}%
$\textsf{fin)} be the constant sequence with value $X_{i}.$

\item If $B_{i}\in\mathcal{U}_{\delta},$ let $M_{i}$ such that $\delta
_{i}=\delta_{M_{i}}\leq\delta$ and $F_{i}:\omega\longrightarrow$
$\mathcal{P(}$\textsf{fin) }$\in M_{i}$ such that $B_{i}\in\mathcal{C(}%
F_{i}^{f_{\delta_{i}}})$ and each $\mathcal{C}\left(  F_{i}\left(  k\right)
\right)  $ is in $\mathcal{A}_{<\delta_{i}}.$
\end{enumerate}

\qquad\ \ 

It might be possible that for some $i\leq n$ both clauses apply. If that is
the case, we can choose to follow either one of them. For each $i\leq n,$ we
have the following:

\begin{enumerate}
\item $B_{i}\in\mathcal{C(}F_{i}^{f_{\delta_{i}}}).$

\item $F_{i}:\omega\longrightarrow$ $\mathcal{P(}$\textsf{fin) }$\in M_{i}$
and is $\subseteq$-decreasing$.$

\item $\mathcal{C}\left(  F_{i}\left(  k\right)  \right)  \in\mathcal{A}%
_{<\delta}$ for all $k\in\omega.$
\end{enumerate}

\qquad\ \ 

By taking a reenumeration and possibly picking more elements of $\mathcal{U}%
_{\delta}\cup%
{\textstyle\bigcup}
\mathcal{A}_{<\delta},$ we may assume that $\left\langle M_{0},...,M_{n}%
\right\rangle $ is $\delta$-increasing and $\delta_{n}=\delta.$ Let $m\leq n$
be the least such that $\delta_{m}=\delta.$ We claim that $H=\mathcal{C(}%
F_{0}^{f_{\delta_{0}}})\cup...\cup\mathcal{C(}F_{m-1}^{f_{\delta_{m-1}}})\cup%
{\textstyle\bigcup}
\left\{  \mathcal{C}\left(  F_{i}\left(  k\right)  \right)  \mid m\leq i\leq
n\wedge k\in\omega\right\}  $ is contained in $\mathcal{U}_{<\delta}\cup%
{\textstyle\bigcup}
\mathcal{A}_{<\delta}.$ Pick $i\leq n.$ We have the following cases:

\begin{enumerate}
\item If $B_{i}\in%
{\textstyle\bigcup}
\mathcal{A}_{<\delta},$ we have that $\mathcal{C}\left(  X_{i}\right)
\in\mathcal{A}_{<\delta}$ and $F_{i}^{\delta_{i}}=X_{i}$.

\item If $B_{i}\in\mathcal{U}_{\delta}$ and $i<m,$ then $F_{i}:\omega
\longrightarrow\mathcal{A}_{<\delta_{i}}$ which implies that $F_{i}%
^{f_{\delta_{i}}}\in\mathcal{U}_{\delta_{i}}\subseteq\mathcal{U}_{\delta}.$

\item If $B_{i}\in\mathcal{U}_{\delta}$ and $m\leq i,$ there is nothing to do
since we already noted that $\mathcal{C}\left(  F_{i}\left(  k\right)
\right)  \in\mathcal{A}_{<\delta}$ for all $k\in\omega.$
\end{enumerate}

\qquad\ \ \ \ 

Recall that $\mathcal{U}_{<\delta}\cup%
{\textstyle\bigcup}
\mathcal{A}_{<\delta}$ is centered, so $H$ is centered as well. We are now in
position to invoke Lemma \ref{Lema pathway strong ppoint} and conclude that $%
{\textstyle\bigcup\limits_{i\leq n}}
\mathcal{C(}F_{i}^{f_{\delta_{i}}})$ is centered. Since $B_{i}\in
\mathcal{C(}F_{i}^{f_{\delta_{i}}}),$ this finishes the proof of the claim. We
use Zorn's Lemma and find $\mathcal{A}_{\delta}\subseteq W_{\delta}$ extending
$\mathcal{A}_{<\delta}$ such that $\mathcal{U}_{\delta}\cup%
{\textstyle\bigcup}
\mathcal{A}_{\delta}$ is centered and it is maximal with these properties.

\qquad\qquad\qquad

After completing the recursion, define $\mathcal{U=}%
{\textstyle\bigcup\limits_{\delta\in D}}
\mathcal{U}_{\delta}$ and $\mathcal{A=}%
{\textstyle\bigcup\limits_{\delta\in D}}
\mathcal{A}_{\delta}.$ We will now prove the following:

\begin{claim}
\qquad\ \ \ 

\begin{enumerate}
\item $\mathcal{U\cup}%
{\textstyle\bigcup}
\mathcal{A}$ is centered.

\item $\mathcal{U}=%
{\textstyle\bigcup}
\mathcal{A}.$

\item $\mathcal{U}$ is an ultrafilter.

\item If $X\in\left(  \mathcal{U}^{<\omega}\right)  ^{+},$ then $\mathcal{C}%
\left(  X\right)  \in\mathcal{A}.$

\item $\mathcal{U}$ is a strong \textsf{P}-point.
\end{enumerate}
\end{claim}

\qquad\qquad\ \ \ \ 

Since $\mathcal{U\cup}%
{\textstyle\bigcup}
\mathcal{A}$ is equal to the union of $\mathcal{U}_{\delta}\mathcal{\cup}%
{\textstyle\bigcup}
\mathcal{A}_{\delta}$ and these are increasing and centered, it follows that
$\mathcal{U\cup}%
{\textstyle\bigcup}
\mathcal{A}$ is centered. We prove that $\mathcal{U\subseteq}$ $%
{\textstyle\bigcup}
\mathcal{A}.$ Let $B\in\mathcal{U},$ $\delta\in D$ and $M\in\mathcal{S}$ such
that $\delta_{M}=\delta$ and $B\in M.$ Let $X=\left[  B\right]  ^{1}$ and
recall that $\mathcal{C}\left(  X\right)  =\left\{  A\subseteq\omega\mid
B\subseteq A\right\}  $ and $\mathcal{C}\left(  X\right)  \in W_{\delta}$.
Clearly $\mathcal{U}_{\delta}\cup\mathcal{A}_{\delta}\cup\mathcal{C}\left(
X\right)  $ is centered, so by the maximality of $\mathcal{A}_{\delta},$ we
get that $\mathcal{C}\left(  X\right)  \in\mathcal{A}_{\delta},$ hence $B\in%
{\textstyle\bigcup}
\mathcal{A}.$ Now, take $A\in%
{\textstyle\bigcup}
\mathcal{A}.$ Find $\delta\in D$ and $Y\in W_{\delta}$ such that
$A\in\mathcal{C}\left(  Y\right)  .$ Let $F$ be the constant sequence with
value $Y.$ We now find $\gamma\in D$ and $M\in\mathcal{S}$ such that
$\delta_{M}=\gamma$ and $\delta,F\in M.$ We have that $\mathcal{C}\left(
Y\right)  =\mathcal{C}\left(  F^{f_{\gamma}}\right)  \subseteq\mathcal{U}%
_{\gamma}.$

\qquad\qquad\ \ \qquad\ \ \ 

We now prove that $\mathcal{U}$ is an ultrafilter. Let $A,B,C,E\subseteq
\omega$ such that $A,B\in\mathcal{U}$ and $A\subseteq C.$ Find $\delta\in D$
and $M\in\mathcal{S}$ such that $\delta_{M}=\delta$ and $A,B,C,E\in M.$ Let
$X=\left[  A\cap B\right]  ^{1},$ which is in $W_{\delta}.$ Since $\mathcal{U=%
{\textstyle\bigcup}
A}$ is centered, it follows that $\mathcal{U}_{\delta}\cup\mathcal{C}\left(
X\right)  \cup%
{\textstyle\bigcup}
\mathcal{A}_{\delta}$ is centered. By the maximality of $\mathcal{A}_{\delta
},$ we get that $\mathcal{C}\left(  X\right)  \in\mathcal{A}_{\delta}.$ Since
$A\cap B,$ $C\in$ $\mathcal{C}\left(  X\right)  ,$ it follows that $A\cap
B,C\in\mathcal{U}.$ Moreover, we know that either $\mathcal{U}_{\delta}\cup%
{\textstyle\bigcup}
\mathcal{A}_{\delta}\cup\mathcal{C}(\left[  E\right]  ^{1})$ is centered or
$\mathcal{U}_{\delta}\cup%
{\textstyle\bigcup}
\mathcal{A}_{\delta}\cup\mathcal{C(}\left[  \omega\setminus E\right]  ^{1})$
is centered, so by the maximality of $\mathcal{A}_{\delta},$ we obtain that
$\mathcal{C}\left(  \left[  E\right]  ^{1}\right)  \in\mathcal{A}_{\delta}$ or
$\mathcal{C}\left(  \left[  \omega\setminus E\right]  ^{1}\right)
\in\mathcal{A}_{\delta}.$ Finally, recall that in the first step of the
recursion we made sure that $\mathcal{U}$ contains the Fr\'{e}chet filter.

\qquad\qquad\qquad\ \ 

Let $X\in\left(  \mathcal{U}^{<\omega}\right)  ^{+}$. Since $\mathcal{U}$ is
an ultrafilter, we know that $\mathcal{C}\left(  X\right)  \subseteq
\mathcal{U}$ by Lemma \ref{Lema positivo ultrafiltro}. Find $\delta\in D$ and
$M\in\mathcal{S}$ such that $\delta_{M}=\delta$ and $X\in M.$ Clearly
$\mathcal{U}_{\delta}\cup\mathcal{C}\left(  X\right)  \cup%
{\textstyle\bigcup}
\mathcal{A}_{\delta}$ is centered, so by the maximality of $\mathcal{A}%
_{\delta}$ we conclude that $\mathcal{C}\left(  X\right)  \in\mathcal{A}%
_{\delta}.$

\qquad\qquad

It only remains to prove that $\mathcal{U}$ is a strong \textsf{P}-point. We
use Theorem \ref{Equivalencias strong ppoint}. Let $F:\omega\longrightarrow
\left(  \mathcal{U}^{<\omega}\right)  ^{+}$ be $\subseteq$-decreasing$.$ By
the previous point of the claim, for every $n\in\omega,$ we can find
$\delta_{n}\in D$ such that $\mathcal{C}\left(  F\left(  n\right)  \right)
\in\mathcal{A}_{\delta_{n}}.$ We now choose $\delta\in D$ and $M\in
\mathcal{S}$ with $\delta_{M}=\delta$ such that $F\in M$ and $\delta
_{n}<\delta$ for every $n\in\omega.$ In this way, $\mathcal{C(}F^{f_{\delta}%
})\in\mathcal{P}_{\delta}$ and then it is contained in $\mathcal{U}.$ We
conclude that $F^{f_{\delta}}\in\left(  \mathcal{U}^{<\omega}\right)  ^{+}$ by
applying Lemma \ref{Lema positivo ultrafiltro} once again.
\end{proof}

\section{A Gruff ultrafilter from a multiple $\mathfrak{d}$%
-pathway\label{Gruff}}

We now turn our attention to ultrafilters on the rational
numbers\footnote{When discussing the rational numbers, we always assume it is
equipped with its usual topology.}, which we denote by $\mathbb{Q}$. A
particularly nice combinatorial feature of the rational numbers (which is not
true for the real numbers), is that it has no \emph{Bernstein subsets}. In
other words, if we split the rational numbers into two pieces, then at least
one of them contains a perfect subset\footnote{Recall that perfect sets are
non-empty.}. This property motivates the following definition:

\begin{definition}
Let $\mathcal{U}$ be an ultrafilter on $\mathbb{Q}.$ We say that $\mathcal{U}$
is a \emph{Gruff ultrafilter }if it has a base of perfect sets.
\end{definition}

\qquad\ \ \ 

Gruff ultrafilters were introduced by van Douwen in \cite{gruffvanDouwen}
while studying $\beta\mathbb{Q}$ (the \v{C}ech-Stone compactification of
$\mathbb{Q})$ who asked if there are such ultrafilters. He was able to to
prove they exist in case that \textsf{cov}$\left(  \mathcal{M}\right)
=\mathfrak{c}$ and later Copl\'{a}kov\'{a} and Hart in \cite{Hart} obtained
the same conclusion under $\mathfrak{b=c}.$ Both of these results were
improved by Fern\'{a}ndez-Bret\'{o}n and Hru\v{s}\'{a}k in
\cite{GruffUltrafilters} where a Gruff ultrafilter is obtained from
$\mathfrak{d=c}.$ Fern\'{a}ndez-Bret\'{o}n and Hru\v{s}\'{a}k were also
interested in the existence of Gruff ultrafilters in the random model and
constructed one using a pathway (see also \cite{GeneralizedPathways}).
Unfortunately, as discussed before, it is not known if pathways exist in the
random model. We will now build a Gruff ultrafilter from a multiple
$\mathfrak{d}$-pathway. Our approach takes inspiration from
\cite{GruffUltrafilters} (\cite{GeneralizedPathways}) and
\cite{PFiltersCohenRandomLaver}. Apart from the papers already mentioned, the
reader may consult \cite{GruffEmmanuel}, \cite{CPAbook} and \cite{GruffCPA} to
learn more about Gruff ultrafilters and \cite{irr} for more on combinatorics
of scattered subsets of the rationals.

\qquad\ \ \qquad\ \ 

Denote by \textsf{scatt }the ideal of scattered subsets of $\mathbb{Q}$ and by
\textsf{bscatt }the ideal generated by both the scattered sets and the bounded
(from above) subsets of $\mathbb{Q}.$ When constructing a Gruff ultrafilter,
it is sometimes more convenient to build one which has a base of perfect
unbounded subsets (see \cite{GruffUltrafilters}). Given $A\subseteq
\mathbb{Q},$ the \emph{crowded kernel of} $A$ (denoted by $K\left(  A\right)
$) is the union of all the crowded subsets of $A.$ The following are simple
remarks regarding this notion:

\begin{lemma}
Let $A\subseteq\mathbb{Q}.$ \label{prop basicas scatt}

\begin{enumerate}
\item If $A\in$ \textsf{scatt}$^{+}$, then $K\left(  A\right)  $ is crowded.

\item $K\left(  A\right)  $ is the largest crowded subset contained in $A.$

\item If $A\in$ \textsf{bscatt}$^{+}$, then $K\left(  A\right)  $ is crowded
and unbounded.

\item The symmetric difference between $A$ and $K\left(  A\right)  $ is in
\textsf{scatt.}

\item If $\mathcal{F}$ is a filter on $\mathbb{Q}$ such that \textsf{scatt}%
$^{\ast}\subseteq\mathcal{F}$ and $A\in\mathcal{F},$ then $K\left(  A\right)
\in\mathcal{F}.$
\end{enumerate}
\end{lemma}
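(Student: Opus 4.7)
The plan is to prove the five items in the order (1), (2), (4), (3), (5), since item (4) is the workhorse that turns (3) and (5) into short bookkeeping arguments.

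For item (1), assuming $A\in$ \textsf{scatt}$^{+}$ there is some non-empty crowded $C\subseteq A$, so $K(A)\supseteq C\neq\emptyset$. The only substantive observation is that an arbitrary union of crowded subsets of $A$ is itself crowded: if $x\in K(A)$, pick crowded $C\subseteq A$ with $x\in C$; every neighbourhood $U$ of $x$ meets $C\setminus\{x\}\subseteq K(A)\setminus\{x\}$, so $x$ is not isolated in $K(A)$. Item (2) is then immediate: $K(A)$ is a crowded subset of $A$ containing every crowded subset of $A$ by definition.

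For item (4), suppose $A\setminus K(A)$ contained a non-empty crowded subset $C$; then $C\subseteq A$ would be crowded, so by definition of $K(A)$ we would have $C\subseteq K(A)$, contradicting $C\subseteq A\setminus K(A)$. Hence $A\setminus K(A)\in$ \textsf{scatt}, and because $K(A)\subseteq A$ the symmetric difference $A\triangle K(A)$ coincides with $A\setminus K(A)$, which is scattered. Item (3) now falls out almost for free: item (1) already yields crowdedness of $K(A)$, and if $K(A)$ were bounded, then item (4) would decompose $A=K(A)\cup(A\setminus K(A))$ as the union of a bounded set and a scattered set, putting $A\in$ \textsf{bscatt} and contradicting $A\in$ \textsf{bscatt}$^{+}$.

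For item (5), item (4) gives $A\setminus K(A)\in$ \textsf{scatt}, so $\mathbb{Q}\setminus(A\setminus K(A))\in$ \textsf{scatt}$^{\ast}\subseteq\mathcal{F}$; intersecting with $A\in\mathcal{F}$ yields $K(A)=A\cap\bigl(\mathbb{Q}\setminus(A\setminus K(A))\bigr)\in\mathcal{F}$. None of these steps is a real obstacle---they are elementary topological verifications---but there is one convention to pin down: throughout we interpret ``crowded'' as implicitly non-empty (consistent with the definition of perfect sets in the notation section), since otherwise the empty set would be crowded and item (4) would be vacuously false. With that understanding, the proofs above are complete.
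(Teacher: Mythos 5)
Your proof is correct, and the paper itself offers no proof to compare against: the lemma is stated as a list of "simple remarks" and left to the reader, with exactly the elementary verifications you give (union of crowded sets is crowded, maximality of $K(A)$, $A\setminus K(A)$ scattered, and the filter bookkeeping) being the intended argument. Your closing remark about reading "crowded" as non-empty in the definition of \textsf{scatt} is the right convention to adopt and does not affect anything else in the paper.
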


\qquad\ \ 

We will now recall a very useful notion from \cite{GruffUltrafilters}. From
now on, fix an enumeration $\mathbb{Q}=\left\{  q_{n}\mid n\in\omega\right\}
$. For $q\in\mathbb{Q}$ and $r>0,$ we denote by $B\left(  q,r\right)  $ the
open ball of $q$ with radius $r.$ Given a function $f\in\omega^{\omega}$ and
$n\in\omega,$ denote $J_{f}\left(  n\right)  =B(q_{n},\frac{\sqrt{2}}{k}),$
where $k$ is the least natural number such that $q_{m}\notin B(q_{n}%
,\frac{\sqrt{2}}{k})$ for every $m\leq f\left(  n\right)  $ such that $m\neq
n$ (the purpose of $\sqrt{2}$ is only to ensure that $J_{f}\left(  n\right)  $
is a clopen subset of $\mathbb{Q},$ evidently we can use any other positive
irrational number). Intuitively, we are making $J_{f}\left(  n\right)  $ as
large as possible with the restriction that it can not include any $q_{m}$ for
which $m\leq f\left(  n\right)  $ and $m\neq n.$

\begin{definition}
Let $X\subseteq\mathbb{Q}$ and $f\in\omega^{\omega}.$ Define $X\left(
f\right)  =\mathbb{Q\setminus}%
{\textstyle\bigcup\limits_{n\notin X}}
J_{f}\left(  n\right)  .$
\end{definition}

\qquad\qquad\ \ 

The following two results can be found in \cite{GruffUltrafilters}:

\begin{lemma}
Let $X,Y\subseteq\mathbb{Q}$ and $f,g\in\omega^{\omega}.$

\begin{enumerate}
\item $X\left(  f\right)  $ is a closed subset of $X.$

\item If $X\subseteq Y,$ then $X\left(  f\right)  \subseteq Y\left(  f\right)
.$

\item $X\left(  f\right)  \cap Y\left(  f\right)  =\left(  X\cap Y\right)
\left(  f\right)  .$

\item If $f\leq g,$ then $X\left(  f\right)  \subseteq X\left(  g\right)  .$
\end{enumerate}
\end{lemma}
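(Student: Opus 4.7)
The plan is to unpack the definition $X(f)=\mathbb{Q}\setminus\bigcup_{n\notin X}J_{f}(n)$ and verify each clause by a short direct computation, relying only on the fact that $q_{n}\in J_{f}(n)$ by construction and that $J_{f}(n)$ shrinks as $f$ grows.

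For clause 1, I would note that $\bigcup_{n\notin X}J_{f}(n)$ is open in $\mathbb{Q}$ (a union of open balls), so its complement $X(f)$ is closed. To see $X(f)\subseteq X$, I would argue by contradiction: if $q_{m}\in X(f)$ but $m\notin X$, then $q_{m}\in J_{f}(m)\subseteq\bigcup_{n\notin X}J_{f}(n)$, contradicting $q_{m}\in X(f)$. For clause 2, understood as the assertion that $X\subseteq Y$ implies $X(f)\subseteq Y(f)$, I would observe that $X\subseteq Y$ gives $\{n\mid n\notin Y\}\subseteq\{n\mid n\notin X\}$, hence $\bigcup_{n\notin Y}J_{f}(n)\subseteq\bigcup_{n\notin X}J_{f}(n)$, and the conclusion follows by complementation.

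Clause 3 is a direct Boolean computation: $q\in X(f)\cap Y(f)$ iff $q\notin J_{f}(n)$ for every $n\notin X$ and every $n\notin Y$, iff $q\notin J_{f}(n)$ for every $n\notin X\cap Y$, iff $q\in(X\cap Y)(f)$. Clause 4 is where the slightly more delicate point lives. Writing $J_{f}(n)=B(q_{n},\sqrt{2}/k_{f}(n))$ with $k_{f}(n)$ the least integer $k$ such that no $q_{m}$ with $m\leq f(n)$ and $m\neq n$ lies in $B(q_{n},\sqrt{2}/k)$, I would observe that if $f\leq g$ then $k_{g}(n)$ must satisfy the same exclusion condition over a superset of indices, so $k_{g}(n)\geq k_{f}(n)$, whence $J_{g}(n)\subseteq J_{f}(n)$. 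Therefore $\bigcup_{n\notin X}J_{g}(n)\subseteq\bigcup_{n\notin X}J_{f}(n)$, and taking complements gives $X(f)\subseteq X(g)$.

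The main obstacle, such as it is, is clause 4: one must be careful to see that the defining minimality condition on $k_{f}(n)$ really does become more restrictive as $f$ grows, and that this translates into the expected shrinking of $J_{f}(n)$. Everything else is routine manipulation of the defining formula.
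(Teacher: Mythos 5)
Your proof is correct and complete; the paper itself gives no proof of this lemma (it simply cites the Gruff ultrafilters paper of Fern\'andez-Bret\'on and Hru\v{s}\'ak), and your direct verification from the definition of $J_{f}(n)$ and $X(f)$ is exactly the intended argument, including the key monotonicity observation $k_{f}(n)\leq k_{g}(n)$ in clause 4. You were also right to read clause 2 as the (evidently intended) statement $X(f)\subseteq Y(f)$, since the displayed conclusion $X(f)\subseteq X(g)$ is a typo duplicating clause 4.
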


\begin{proposition}
Let $X\subseteq\mathbb{Q}$ be crowded and unbounded. There is $h\in
\omega^{\omega}$ such that for every increasing $g\in\omega^{\omega},$ if
$g\nleq^{\ast}h,$ then $X\left(  g\right)  $ is perfect and unbounded.
\label{Prop David Michael}
\end{proposition}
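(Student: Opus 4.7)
The plan is to construct $h$ by a bookkeeping argument against the countably many ``witness requirements'' needed to force $X(g)$ to be perfect and unbounded. The \emph{key mechanical fact} is this: for $q_m \in X$ and any $j$ with $q_j \notin X$ and $j \neq m$, we have $q_m \notin J_g(j)$ whenever $g(j) \geq N_j(m) := \min\{i \neq j : |q_i - q_j| < |q_m - q_j|\}$, because then the defining ball $J_g(j) = B(q_j, \sqrt{2}/k_j)$ has radius strictly less than $|q_m - q_j|$. In particular, $g(j) \geq m$ is always sufficient (take $i = m$). Since $g$ is strictly increasing and so $g(j) \geq j$, the only nontrivial constraints for $q_m \in X(g)$ come from the finite set $F_m := \{j < m : q_j \notin X\}$.

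First I would enumerate the tasks: for each $a$ with $q_a \in X$ and each $k \in \omega$, a crowdedness task ``produce a witness in $X \cap B(q_a, 1/k) \setminus \{q_a\}$ lying in $X(g)$'', and for each $k$, an unboundedness task ``produce a witness in $X \cap (k, \infty) \cap X(g)$''. By crowdedness and unboundedness of $X$, each task admits infinitely many candidates in $X$. I would interleave the tasks into a single enumeration and construct, by recursion on stages $s \in \omega$, a candidate witness $w_s = q_{m_s} \in X$ for the $s$-th task, together with a \emph{trigger index} $n_s \in \omega$, such that the map $s \mapsto n_s$ is a bijection onto $\omega$. Crucially, the witness $m_s$ would be chosen large enough so that $N_j(m_s)$ remains bounded at the finitely many early $j \in F_{m_s}$; I could then set $h(n_s) := \max(m_s, \max_{j \in F_{m_s}} N_j(m_s))$, so that ``$g(n_s) > h(n_s)$'' combined with monotonicity of $g$ (giving $g(j) \geq g(n_s)$ for $j \geq n_s$) forces $g(j) \geq N_j(m_s)$ at each relevant $j$, placing $w_s$ into $X(g)$.

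Given increasing $g$ with $g \nleq^* h$, the set $E = \{n : g(n) > h(n)\}$ is infinite, so meets every task's trigger indices cofinally often, yielding a surviving witness for every task. Closedness of $X(g)$ is already given by the previous lemma; crowdedness follows from the crowdedness witnesses near each $q_a \in X(g) \subseteq X$; unboundedness follows from the unboundedness witnesses. Hence $X(g)$ is perfect and unbounded. The main obstacle is reconciling two competing demands in the bookkeeping: the trigger index $n_s$ must be placed at or before $\min F_{m_s}$ so that monotonicity of $g$ can propagate the bound $g(n_s) > h(n_s)$ to every relevant $j$, yet the $n_s$'s must be pairwise distinct and exhaust $\omega$ so that $h$ takes finite values everywhere. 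This tension is precisely what requires the finer threshold $N_j(m)$ rather than the coarse $m$: by choosing $q_{m_s}$ in a sufficiently constrained region (within a small fixed neighborhood of $q_a$ for crowdedness, or in a prescribed window above $k$ for unboundedness), the quantities $N_j(m_s)$ for $j \in F_{m_s}$ stay controlled, freeing the construction to assign $n_s$ to varying (and eventually all) indices while keeping $h$ finite.
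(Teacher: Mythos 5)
The paper itself does not prove this proposition --- it is quoted from the Fern\'andez-Bret\'on--Hru\v{s}\'ak paper on Gruff ultrafilters --- so your argument has to stand on its own, and it does not. Your preliminary analysis is correct: $q_m\notin J_g(j)$ once $g(j)\geq N_j(m)$ (indeed once $g(j)\geq m$), so for $q_m\in X$ the only constraints come from the finitely many $j<m$ with $q_j\notin X$, and monotonicity propagates a bound $g(n)>h(n)$ to every $j\geq n$. But the bookkeeping on top of this fails. You give each task instance its own trigger index $n_s$, with $s\mapsto n_s$ a bijection, and then assert that $E=\{n: g(n)>h(n)\}$ ``meets every task's trigger indices cofinally often.'' An infinite subset of $\omega$ need not meet every infinite subset of $\omega$: if the triggers of distinct tasks are disjoint, $E$ can be contained in the trigger set of a single task and all other tasks receive no witness. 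The repair is to drop the bijection entirely and let every sufficiently large $n$ serve \emph{all} tasks of index at most $n$ at once, taking $h(n)$ to be the maximum of the corresponding witness indices; this also dissolves the ``tension'' you describe, since triggers no longer need to exhaust $\omega$ one task at a time.

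The deeper gap is in the crowdedness tasks themselves. As stated (``for every $q_a\in X$ and every $k$, produce a witness in $X\cap B(q_a,1/k)\setminus\{q_a\}$ lying in $X(g)$'') they are unsatisfiable: if, say, $q_0\notin X$ and $g(0)=0$ --- which $g\nleq^{\ast}h$ cannot rule out for any $h$ --- then $J_g(0)=B(q_0,\sqrt{2})$ is deleted from $X(g)$ wholesale, and any ball $B(q_a,1/k)$ contained in it meets $X(g)$ in the empty set. The tasks must be conditional on $q_a\in X(g)$, and, crucially, the construction must \emph{use} that hypothesis. Your mechanism for the low indices $j<n_s$ is to force $N_j(m_s)\leq j\leq g(j)$ by constraining where $q_{m_s}$ lives, but this is impossible for $j=0$ (there is no $i<0$) and, more generally, no placement of a witness inside a large low-index ball can protect it. The missing idea is a quantization argument: the radius of $J_g(j)$ has the form $\sqrt{2}/k$, so if $q_a\notin J_g(j)$ then that radius is at most $s_j$, the largest number of the form $\sqrt{2}/k$ below $|q_a-q_j|$, whence $J_g(j)\subseteq B(q_j,s_j)$ and $B(q_a,\epsilon_{a,n})$ with $\epsilon_{a,n}=\min\{|q_a-q_j|-s_j : j<n,\ q_j\notin X\}>0$ is disjoint from all the low balls whenever $q_a$ is. Choosing the crowdedness witness in $X\cap B(q_a,\min(1/k,\epsilon_{a,n}))\setminus\{q_a\}$ (and the unboundedness witness beyond the bounded union of the low balls computed with parameter $g(j)=j$) makes the argument close; without this the proof does not go through.
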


\qquad\qquad\ \ \ \ \ \qquad\qquad\ \ \ 

Using Proposition \ref{Prop David Michael}, for every $X\subseteq\mathbb{Q}$
that is crowded and unbounded, we fix a function $h_{X}\in\omega^{\omega}$
such that for every increasing $g\in\omega^{\omega},$ if $g\nleq^{\ast}h_{X},$
then $X\left(  g\right)  $ is perfect and unbounded.

\qquad\qquad\ \ \ 

For a rational number $q\in\mathbb{Q},$ define $\left\langle q\right\rangle
_{0}=\left\{  A\subseteq\mathbb{Q}\mid q\notin A\right\}  $ and $\left\langle
q\right\rangle _{1}=\left\{  A\subseteq\mathbb{Q}\mid q\in A\right\}  .$ We
endow $\mathcal{P}\left(  \mathbb{Q}\right)  $ with the topology that has as a
subbase the family $\left\{  \left\langle q\right\rangle _{i}\mid
q\in\mathbb{Q}\wedge i<2\right\}  .$ Evidently, $\mathcal{P}\left(
\mathbb{Q}\right)  $ with this topology is homeomorphic to $\mathcal{P}\left(
\omega\right)  .$ The following lemma refers to this topology.

\begin{lemma}
\label{Complejidad Gruff}\qquad\ \ \qquad\ \ 

\begin{enumerate}
\item The collection of all crowded unbounded subsets of $\mathbb{Q}$ is
$G_{\delta}.$

\item The ideal \textsf{bscatt }is coanalytic.
\end{enumerate}
\end{lemma}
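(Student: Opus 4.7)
The plan is to identify $\mathcal{P}(\mathbb{Q})$ with $2^{\omega}$ via the enumeration $\mathbb{Q}=\{q_{n}\mid n\in\omega\}$ and then translate each topological condition into an explicit pointclass computation.

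For part 1, I would unfold the definitions directly. Crowdedness of $X$ is the conjunction over $(n,k)\in\omega^{2}$ of the open condition ``$q_{n}\notin X$, or there is some $m\neq n$ with $|q_{m}-q_{n}|<1/k$ and $q_{m}\in X$''; writing this using the subbasic clopens $\langle n\rangle_{0}$ and $\langle m\rangle_{1}$ makes it a countable intersection of open sets, hence $G_{\delta}$. Unboundedness is the conjunction over $N\in\omega$ of the open condition ``some $q_{n}\in X$ satisfies $|q_{n}|>N$'', again $G_{\delta}$. Intersecting the two $G_{\delta}$ sets yields the first claim.

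For part 2, I would first observe that $X\in\textsf{scatt}$ iff every non-empty $Y\subseteq X$ has a point isolated in $Y$, since a crowded subset of $X$ is precisely a non-empty $Y\subseteq X$ without isolated points. Quantifying universally over $Y\in 2^{\omega}$ with a Borel (in fact $F_{\sigma}$) matrix expressing ``$Y\not\subseteq X$, or $Y=\emptyset$, or $\exists n,k\;(q_{n}\in Y$ and $Y\cap B(q_{n},1/k)=\{q_{n}\})$'' yields a $\Pi^{1}_{1}$ definition of \textsf{scatt}. To pass to \textsf{bscatt}, note that $A\in\textsf{bscatt}$ iff there exists $N\in\omega$ with $A\setminus[-N,N]\in\textsf{scatt}$. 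Since the map $A\mapsto A\setminus[-N,N]$ is continuous on $2^{\omega}$ and $\Pi^{1}_{1}$ is closed under continuous preimages and countable unions, \textsf{bscatt} is coanalytic.

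The only step meriting actual care is the reformulation of ``scattered'' as ``every non-empty subset has an isolated point'' and the verification that the associated matrix is Borel; both are standard facts about the absence of dense-in-itself subsets, so I do not expect any serious obstacle.
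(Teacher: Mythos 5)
The paper states this lemma without proof (it is introduced as ``well-known''), so there is no argument of the authors' to compare against; your verification is correct and takes the standard route. Part 1 is the usual unfolding of crowdedness and unboundedness into countable intersections of open conditions in the subbasic clopens, and Part 2 correctly reduces \textsf{bscatt} to \textsf{scatt} via $A\in\textsf{bscatt}$ iff $A\setminus[-N,N]\in\textsf{scatt}$ for some $N$ (which implicitly uses the standard fact that finite unions of scattered sets are scattered) and then exhibits \textsf{scatt} as a universal real quantifier over an $F_{\sigma}$ matrix, hence $\Pi^{1}_{1}$.
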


We now prove the following:

\begin{lemma}
Let $\left(  \mathcal{B},\mathcal{S}\right)  $ be a multiple $\mathfrak{d}%
$-pathway and $\left\langle M_{0},...,M_{n}\right\rangle $ a $\delta
$-increasing sequence of models from $\mathcal{S}.$ Let $m\leq n$ be the first
one such that $\delta_{m}=\delta_{n}.$ For every $i\leq n,$ pick $X_{i}\in
M_{i}\cap$ \textsf{bscatt}$^{+}.$

\qquad\qquad\qquad\ \ \ \qquad\ \ \ \ \qquad\ \ \ \ \ 

If $X_{0}\left(  f_{\delta_{0}}\right)  \cap...\cap X_{m-1}\left(
f_{\delta_{m-1}}\right)  \cap X_{m}\cap...\cap X_{n}\in$ \textsf{bscatt}%
$^{+},$ then $%
{\textstyle\bigcap\limits_{i\leq n}}
X_{i}\left(  f_{\delta_{i}}\right)  \in$ \textsf{bscatt}$^{+}.$
\label{lemma pathway Gruff}
\end{lemma}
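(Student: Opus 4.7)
The plan is to follow the same template as Lemmas \ref{Lema pathways para Ppunto} and \ref{Lema pathway strong ppoint}: encode the desired conclusion as a projective, $\leq^{\ast}$-adequate relation $R \in M_n$, then invoke the multiple $\mathfrak{d}$-pathway property. Specifically, I define $R \subseteq \mathcal{P}(\mathbb{Q})^{n+1} \times \omega^{\omega}$ by declaring $R(Y_0,\ldots,Y_n,f)$ to hold iff either
\begin{enumerate}
\item $Y_0(f_{\delta_0})\cap\cdots\cap Y_{m-1}(f_{\delta_{m-1}})\cap Y_m\cap\cdots\cap Y_n \in \textsf{bscatt}$, or
\item $Y_0(f_{\delta_0})\cap\cdots\cap Y_{m-1}(f_{\delta_{m-1}})\cap Y_m(f)\cap\cdots\cap Y_n(f) \in \textsf{bscatt}^{+}$.
\end{enumerate}

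Since $\delta_i < \delta_m = \delta_n$ for $i<m$, each $\delta_i$ lies in $M_n\cap\omega_1$, and because $\mathcal{B}\in M_n$, the parameters $f_{\delta_0},\ldots,f_{\delta_{m-1}}$ all belong to $M_n$, so $R \in M_n$. By Lemma \ref{Complejidad Gruff}, \textsf{bscatt} is coanalytic, so the first clause is coanalytic and the second is analytic in the parameters; in particular $R$ is projective (and membership of $X_i$ in $\mathcal{P}(\mathbb{Q})$ is handled by extending the pathway to Polish spaces, as already established).

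The main point is verifying $\leq^{\ast}$-adequacy. Given $Y_0,\ldots,Y_n$, if the first clause already holds then any control function works; otherwise set $A = Y_0(f_{\delta_0})\cap\cdots\cap Y_{m-1}(f_{\delta_{m-1}})$ and $B = Y_m\cap\cdots\cap Y_n$, so that $A\cap B \in \textsf{bscatt}^{+}$. By Lemma \ref{prop basicas scatt} its crowded kernel $C = K(A\cap B)$ is crowded and unbounded, so Proposition \ref{Prop David Michael} furnishes $h_C \in \omega^{\omega}$ such that $C(g)$ is perfect and unbounded whenever $g$ is increasing with $g \nleq^{\ast} h_C$. Using $C \subseteq A\cap B$ together with monotonicity and the intersection identity $(X\cap Y)(g) = X(g)\cap Y(g)$, we get
\[
C(g) \subseteq A(g)\cap B(g) \subseteq A \cap B(g) = A \cap Y_m(g)\cap\cdots\cap Y_n(g),
\]
so the set on the right is in $\textsf{bscatt}^{+}$, witnessing the second clause. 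Thus $h_C$ is an $R$-control for $(Y_0,\ldots,Y_n)$.

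Having checked that $R$ is projective, $\leq^{\ast}$-adequate, and lies in $M_n$, the multiple $\mathfrak{d}$-pathway property applied to the $\delta$-increasing sequence $\langle M_0,\ldots,M_n\rangle$ with $x_i = X_i \in M_i$ yields $R(X_0,\ldots,X_n,f_{\delta_n})$. The hypothesis of the lemma is precisely that the first clause fails for $(X_0,\ldots,X_n)$, so the second clause must hold; since $\delta_m = \cdots = \delta_n$, this is exactly $\bigcap_{i\leq n} X_i(f_{\delta_i}) \in \textsf{bscatt}^{+}$. The only genuinely new obstacle beyond the templates of the previous sections is the adequacy argument: it requires combining the crowded kernel device with the main property of the sets $X(g)$ from Proposition \ref{Prop David Michael}, but once those tools are assembled the construction parallels the earlier proofs.
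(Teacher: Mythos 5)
Your proposal is correct and follows essentially the same route as the paper: the same two-clause relation $R$, the same complexity analysis (coanalytic first clause, analytic second, hence projective), and the same adequacy argument via the crowded kernel $K(Z)$ of $Z=Y_{0}(f_{\delta_{0}})\cap\cdots\cap Y_{m-1}(f_{\delta_{m-1}})\cap Y_{m}\cap\cdots\cap Y_{n}$ together with Proposition \ref{Prop David Michael} and the monotonicity/intersection identities for $X(g)$. The only differences are cosmetic (your explicit justification that $f_{\delta_{0}},\ldots,f_{\delta_{m-1}}\in M_{n}$, and the $A$, $B$ notation), so there is nothing to flag.
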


\begin{proof}
Define the relation $R\subseteq\mathcal{P}\left(  \mathbb{Q}\right)
^{n+1}\times\omega^{\omega}$ where $R\left(  Y_{0},...,Y_{n},f\right)  $ holds
in case one of the following conditions is met:

\begin{enumerate}
\item $Y_{0}\left(  f_{\delta_{0}}\right)  \cap...\cap Y_{m-1}\left(
f_{\delta_{m-1}}\right)  \cap Y_{m}\cap...\cap Y_{n}\in$ \textsf{bscatt.}

\item $Y_{0}\left(  f_{\delta_{0}}\right)  \cap...\cap Y_{m-1}\left(
f_{\delta_{m-1}}\right)  \cap Y_{m}\left(  f\right)  \cap...\cap Y_{n}\left(
f\right)  \in$ \textsf{bscatt}$^{+}$\textsf{.}
\end{enumerate}

\qquad\ \ 

Since $f_{\delta_{0}},...,f_{\delta_{m-1}}\in M_{n},$ we conclude that $R\in
M_{n}.$ By Lemma \ref{Complejidad Gruff}, the first clause is coanalytic and
the second one is analytic$,$ so $R$ is projective. We now prove that it is
$\leq^{\ast}$-adequate. Let $Y_{0},...,Y_{n}\subseteq\mathbb{Q},$ if
$Z=Y_{0}\left(  f_{\delta_{0}}\right)  \cap...\cap Y_{m-1}\left(
f_{\delta_{m-1}}\right)  \cap Y_{m}\cap...\cap Y_{n}\in$ \textsf{bscatt},
there is nothing to do, so assume otherwise. We claim that $h_{K\left(
Z\right)  }$ is an $R$-control for $\left(  Y_{0},...,Y_{n}\right)  .$ To see
this, pick $g\in\omega^{\omega}$ increasing such that $g\nleq^{\ast
}h_{K\left(  Z\right)  }.$ We now have the following:\bigskip

$\hfill%
\begin{array}
[c]{lll}%
K\left(  Z\right)  \left(  g\right)  & \subseteq & Z\left(  g\right) \\
& = &
{\textstyle\bigcap\limits_{i<m}}
\left(  Y_{i}\left(  f_{\delta_{i}}\right)  \right)  \left(  g\right)  \cap%
{\textstyle\bigcap\limits_{m\leq i\leq n}}
Y_{i}\left(  g\right) \\
& \subseteq &
{\textstyle\bigcap\limits_{i<m}}
\left(  Y_{i}\left(  f_{\delta_{i}}\right)  \right)  \cap%
{\textstyle\bigcap\limits_{m\leq i\leq n}}
Y_{i}\left(  g\right)
\end{array}
\qquad\ \hfill$

\qquad\ \ \ \ \qquad\ \ \ \ \ \qquad\ \ \ \bigskip

By Proposition \ref{Prop David Michael}, we know that $K\left(  Z\right)
\left(  g\right)  $ is perfect and unbounded, so $%
{\textstyle\bigcap\limits_{i<m}}
\left(  Y_{i}\left(  f_{\delta_{i}}\right)  \right)  \cap%
{\textstyle\bigcap\limits_{m\leq i\leq n}}
Y_{i}\left(  g\right)  $ is not in \textsf{bscatt. }The conclusion of the
lemma follows since $\left(  \mathcal{B},\mathcal{S}\right)  $ is a multiple
$\mathfrak{d}$-pathway.
\end{proof}

\qquad\ \ \ \qquad\ \ 

We now proceed to prove the main result of the section.

\begin{theorem}
If there is a multiple $\mathfrak{d}$-pathway, then there is a Gruff ultrafilter.
\end{theorem}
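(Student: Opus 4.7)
The plan is to mirror the recursive construction of Theorem \ref{Teorema P punto}, replacing the pseudointersection operation $F \mapsto F^g$ by $X \mapsto X(g)$ and the ideal \textsf{fin} by \textsf{bscatt}. Fix a multiple $\mathfrak{d}$-pathway $(\mathcal{B},\mathcal{S})$, set $D = \{\delta_M \mid M \in \mathcal{S}\}$, and for $\delta \in D$ let
\[
W_\delta = \bigcup\{\mathcal{P}(\mathbb{Q}) \cap M \mid M \in \mathcal{S},\, \delta_M \leq \delta\}.
\]
By recursion on $\delta \in D$, I will define $\mathcal{U}_\delta$, $\mathcal{A}_\delta \subseteq W_\delta$ and $\mathcal{P}_\delta$ so that: $\mathcal{A}_\gamma \subseteq \mathcal{A}_\delta$ and $\mathcal{U}_\gamma \subseteq \mathcal{U}_\delta$ for $\gamma \in D \cap \delta$; $\mathcal{P}_\delta$ is the family of all $X(f_\delta)$ such that some $M \in \mathcal{S}$ with $\delta_M = \delta$ contains an $X \in \mathcal{A}_{<\delta}$; $\mathcal{U}_\delta = \bigcup\{\mathcal{P}_\gamma \mid \gamma \in D \cap (\delta+1)\}$; $\mathcal{U}_\delta \cup \mathcal{A}_\delta$ is \emph{centered modulo} \textsf{bscatt} (every finite intersection is in \textsf{bscatt}$^+$); and $\mathcal{A}_\delta$ is maximal among subsets of $W_\delta$ satisfying this last property. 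At the least stage, insist that $\mathcal{A}_\delta$ extend \textsf{bscatt}$^*$ (and hence also \textsf{scatt}$^*$).

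The crucial step is the inductive claim that $\mathcal{U}_\delta \cup \mathcal{A}_{<\delta}$ is centered modulo \textsf{bscatt}. Given $B_0,\dots,B_n$ in this union, for each $i \leq n$ pick $M_i \in \mathcal{S}$ and $X_i \in M_i \cap \mathcal{A}_{<\delta}$ so that either $B_i = X_i$ with $\delta_i < \delta$, or $B_i = X_i(f_{\delta_i})$ with $\delta_i \leq \delta$. After reenumeration (and possibly padding the list), arrange that $\langle M_0,\dots,M_n\rangle$ is $\delta$-increasing with $\delta_n = \delta$, and let $m \leq n$ be least with $\delta_m = \delta$. As in Theorem \ref{Teorema P punto}, the family
\[
H = \{X_0(f_{\delta_0}),\dots,X_{m-1}(f_{\delta_{m-1}})\} \cup \{X_m,\dots,X_n\}
\]
is contained in $\mathcal{U}_{<\delta} \cup \mathcal{A}_{<\delta}$, which by induction is centered modulo \textsf{bscatt}. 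Lemma \ref{lemma pathway Gruff} then yields $\bigcap_{i \leq n} X_i(f_{\delta_i}) \in$ \textsf{bscatt}$^+$, and this set is contained in $B_0 \cap \cdots \cap B_n$. Zorn's Lemma now produces the required maximal $\mathcal{A}_\delta \supseteq \mathcal{A}_{<\delta}$.

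Once the recursion is finished, set $\mathcal{U} = \bigcup_{\delta \in D}\mathcal{U}_\delta$. The arguments of Theorem \ref{Teorema P punto} transfer \emph{verbatim} (reading \textsf{bscatt}$^+$ in place of ``infinite'' throughout) to show that $\mathcal{U} = \bigcup_{\delta} \mathcal{A}_\delta$ is an ultrafilter on $\mathbb{Q}$ extending \textsf{bscatt}$^*$. For the Gruff property, let $U \in \mathcal{U}$; since $\mathcal{U} \supseteq$ \textsf{scatt}$^*$, Lemma \ref{prop basicas scatt} gives $K(U) \in \mathcal{U}$. Choose $\gamma \in D$ and $M \in \mathcal{S}$ with $\delta_M = \gamma$ and $K(U) \in M$; then $h_{K(U)} \in M$, and the pathway property (in the form $R(h,f) \equiv f \nleq^* h$, exactly as in the proof that $f_{\delta_M}$ is unbounded over $M$) yields $f_\gamma \nleq^* h_{K(U)}$. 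Proposition \ref{Prop David Michael} then makes $K(U)(f_\gamma)$ perfect and unbounded, while $K(U) \in \mathcal{A}_{<\gamma}$ puts $K(U)(f_\gamma)$ into $\mathcal{P}_\gamma \subseteq \mathcal{U}$. Thus every $U \in \mathcal{U}$ contains a perfect element of $\mathcal{U}$, so $\mathcal{U}$ is Gruff.

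The main obstacle I foresee is not conceptual, since the genuinely combinatorial content is already packaged in Lemma \ref{lemma pathway Gruff}. Rather, it is a bookkeeping issue: to obtain the Gruff base one must cleanly pass from an arbitrary $U \in \mathcal{U}$ to its crowded kernel $K(U)$ inside $\mathcal{U}$, which forces one to maintain \textsf{scatt}$^*\subseteq \mathcal{U}$ throughout the recursion, and to confirm that the ultrafilter decisions made via maximality of each $\mathcal{A}_\delta$ are compatible with working modulo \textsf{bscatt} instead of modulo \textsf{fin}. Once this is set up, every remaining verification is a direct copy of the argument for Theorem \ref{Teorema P punto}.
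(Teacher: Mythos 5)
Your overall architecture is the paper's: the same recursion on $\delta\in D$, the same appeal to Lemma \ref{lemma pathway Gruff} at the key step, and your closing argument that every $U\in\mathcal{U}$ contains the perfect set $K(U)(f_{\gamma})$ is essentially the paper's proof that $\mathcal{A}\subseteq\mathcal{U}$. However, there is a genuine gap at the central inductive claim, at exactly the point where the Gruff argument cannot be copied verbatim from Theorem \ref{Teorema P punto}. In the \textsf{P}-point proof, when $B_{i}\in\mathcal{A}_{<\delta}$ one takes $F_{i}$ to be the \emph{constant} function with value $B_{i}$, so $F_{i}^{f_{\delta_{i}}}=B_{i}$ and the corresponding member of $H$ is literally $B_{i}$, which lies in $\mathcal{A}_{<\delta}$ for free. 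The operation $X\mapsto X(f)$ has no such fixed-point feature: with your choice $X_{i}=B_{i}$, the set $X_{i}(f_{\delta_{i}})=B_{i}(f_{\delta_{i}})$ that you place in $H$ for $i<m$ is a \emph{proper} closed subset of $B_{i}$, and there is no reason for it to belong to $\mathcal{U}_{<\delta}\cup\mathcal{A}_{<\delta}$: it need not be in $\mathcal{A}_{<\delta}$ (that family is not closed under shrinking), and it is not in $\mathcal{P}_{\delta_{i}}$ unless $B_{i}$ happens to lie in $\mathcal{A}_{<\delta_{i}}$ (and, in the paper's definition of $\mathcal{P}_{\delta}$, to be crowded). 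So the assertion that ``as in Theorem \ref{Teorema P punto}'' the family $H$ is contained in $\mathcal{U}_{<\delta}\cup\mathcal{A}_{<\delta}$ is precisely the step that fails, and without it you cannot verify the hypothesis of Lemma \ref{lemma pathway Gruff}.

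The missing idea is the paper's crowded-kernel substitution \emph{inside} the inductive claim. Before the recursion the paper observes that each $\mathcal{A}_{\delta}$ is automatically closed under crowded kernels: if $B\in\mathcal{A}_{\delta}$ then $K(B)\in W_{\delta}$ by elementarity, $K(B)$ belongs to the filter generated by $\mathcal{U}_{\delta}\cup\mathcal{A}_{\delta}\cup\mathsf{bscatt}^{\ast}$ by Lemma \ref{prop basicas scatt}, and hence $K(B)\in\mathcal{A}_{\delta}$ by maximality. It keeps the crowdedness requirement on $X$ in the definition of $\mathcal{P}_{\delta}$, and in the case $B_{i}\in\mathcal{A}_{<\delta}$ it sets $X_{i}=K(B_{i})$ rather than $X_{i}=B_{i}$; then $X_{i}$ is crowded and lies in $M_{i}\cap\mathcal{A}_{\delta_{i}}$, so $X_{i}(f_{\delta_{i}})$ lands in $\mathcal{U}_{<\delta}$, while still $X_{i}(f_{\delta_{i}})\subseteq K(B_{i})\subseteq B_{i}$, which is what the final inclusion $\bigcap_{i\leq n}X_{i}(f_{\delta_{i}})\subseteq\bigcap_{i\leq n}B_{i}$ requires. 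You do identify crowded kernels and the need to keep $\mathsf{scatt}^{\ast}$ in the filter as the relevant bookkeeping, but you deploy them only at the very end to extract a perfect base, whereas they are indispensable in the middle of the recursion.
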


\begin{proof}
Let $\left(  \mathcal{B},\mathcal{S}\right)  $ be a multiple $\mathfrak{d}%
$-pathway. Define $D=\left\{  \delta_{M}\mid M\in\mathcal{S}\right\}  $ and
for $\delta\in D,$ denote $W_{\delta}=%
{\textstyle\bigcup\limits_{\delta_{M}\leq\delta}}
M\cap\mathcal{P}\left(  \mathbb{Q}\right)  .$ By recursion over $\delta\in D,$
we shall find $\mathcal{U}_{\delta},\mathcal{A}_{\delta}$ and $\mathcal{P}%
_{\delta}$ with the following properties:

\begin{enumerate}
\item $\mathcal{U}_{\delta}$ and $\mathcal{P}_{\delta}$ are families of
perfect and unbounded sets.

\item $\mathcal{A}_{\delta}\subseteq W_{\delta}\cap$ \textsf{bscatt}$^{+}.$

\item If $\gamma\in D\cap\delta,$ then $\mathcal{A}_{\gamma}\subseteq
\mathcal{A}_{\delta}$ and $\mathcal{U}_{\gamma}\subseteq\mathcal{U}_{\delta}.$

\item $\mathcal{P}_{\delta}$ is the family of all $X\left(  f_{\delta}\right)
$ for which there is $M\in\mathcal{S}$ for which $\delta_{M}=\delta$ and $X\in
M\cap\mathcal{A}_{<\delta}$ is crowded (where $\mathcal{A}_{<\delta}=%
{\textstyle\bigcup\limits_{\gamma\in D\cap\delta}}
\mathcal{A}_{\gamma}$).

\item $\mathcal{U}_{\delta}=%
{\textstyle\bigcup}
\left\{  \mathcal{P}_{\xi}\mid\xi\in D\cap\left(  \delta+1\right)  \right\}
.$

\item $\mathcal{U}_{\delta}\cup\mathcal{A}_{\delta}$ generates a filter
contained in \textsf{bscatt}$^{+}.$

\item $\mathcal{A}_{\delta}$ is maximal with respect to points 2 and 6.
\end{enumerate}

\qquad\ \ \ \qquad\ \ 

Before starting the construction, note that $\mathcal{A}_{\delta}$ will have
the following property: If $B\in\mathcal{A}_{\delta},$ then $K\left(
B\right)  \in\mathcal{A}_{\delta}.$ To see this, let $M\in\mathcal{S}$ such
that $B\in M$ and $\delta_{M}\leq\delta.$ Since $B\in M,$ we get that
$K\left(  B\right)  \in M,$ hence $K\left(  B\right)  \in W_{\delta}.$ Call
$\mathcal{F}$ the filter generated by $\mathcal{U}_{\delta}\cup\mathcal{A}%
_{\delta}\cup$ \textsf{bscatt}$^{\ast}$. Since $B\in\mathcal{F},$ then
$K\left(  B\right)  \in\mathcal{F}$ (see Lemma \ref{prop basicas scatt}) which
implies that $\mathcal{U}_{\delta}\cup\mathcal{A}_{\delta}\cup\left\{
K\left(  B\right)  \right\}  $ generates a filter contained in \textsf{bscatt}%
$^{+}.$ By the maximality of $\mathcal{A}_{\delta},$ we conclude that
$K\left(  B\right)  \in\mathcal{A}_{\delta}.$

\ \ \qquad\ \qquad\qquad\qquad\ \ \ \ \qquad\qquad\ \ \ 

Assume we are at step $\delta\in D$ and $\mathcal{U}_{\gamma},$ $\mathcal{A}%
_{\gamma}$ and $\mathcal{P}_{\gamma}$ have been defined for all $\gamma\in
D\cap\delta.$ In case $\delta$ is the minimum of $D,$ we have $\mathcal{U}%
_{\delta}=\mathcal{P}_{\delta}=\emptyset.$ Choose $\mathcal{A}_{\delta
}\subseteq W_{\delta}\cap$ \textsf{scatt}$^{+}$ any maximal centered set
extending the filter of cobounded subsets of $\mathbb{Q}.$ Now consider the
case where $\delta$ is not the least member of $D.$ Note that $\mathcal{U}%
_{\delta}$ and $\mathcal{P}_{\delta}$ are defined from $\mathcal{A}_{<\delta
},$ so we only need to find $\mathcal{A}_{\delta},$ but first we need to prove
that both $\mathcal{U}_{\delta}$ and $\mathcal{P}_{\delta}$ consist of perfect
and unbounded sets. It is enough to prove it for $\mathcal{P}_{\delta}.$ Let
$M\in\mathcal{S}$ with $\delta_{M}=\delta$ and $X\in M\cap\mathcal{A}%
_{<\delta}$ is crowded. Moreover, $X$ is also unbounded since $\mathcal{A}%
_{<\delta}$ extends the filter of cobounded sets. We need to prove that
$X\left(  f_{\delta}\right)  $ is perfect and unbounded. Since $X\in M,$ it
follows that $h_{K}$ is also in $M.$ Since $f_{\delta}$ is unbounded over $M,$
we get that $X\left(  f_{\delta}\right)  $ is perfect and unbounded by
Proposition \ref{Prop David Michael}.

\qquad\ \ \ 

Define $\mathcal{U}_{<\delta}=%
{\textstyle\bigcup\limits_{\xi\in D\cap\delta}}
\mathcal{U}_{\xi}$ and note that $\mathcal{U}_{<\delta}\cup\mathcal{A}%
_{<\delta}$ generates a filter contained in \textsf{bscatt}$^{+}$ by the
recursion hypothesis. We now prove the following:

\begin{claim}
$\mathcal{U}_{\delta}\cup\mathcal{A}_{<\delta}$ generates a filter contained
in \textsf{bscatt}$^{+}.$
\end{claim}

\qquad\ \ \ 

Let $B_{0},...,B_{n}\in\mathcal{U}_{\delta}\cup\mathcal{A}_{<\delta}$, for
each $i\leq n,$ we find $M_{i}\in\mathcal{S}$ and $X_{i}\in M_{i}$ in the
following way:

\begin{enumerate}
\item In case $B_{i}\in\mathcal{A}_{<\delta},$ choose $M_{i}$ for which
$\delta_{i}=\delta_{M_{i}}<\delta$ and $B_{i}\in M_{i}.$ Let $X_{i}=K\left(
B_{i}\right)  .$

\item If $B_{i}\in\mathcal{U}_{\delta},$ choose $M_{i}$ with $\delta
_{i}=\delta_{M_{i}}\leq\delta$ and $X_{i}\in M_{i}\cap\mathcal{A}_{<\delta
_{i}}$ crowded such that $B_{i}=X_{i}\left(  f_{\delta_{i}}\right)  .$
\end{enumerate}

\qquad\ \ 

It might be possible that for some $i\leq n$ both clauses apply. If that is
the case, we use either of them. For each $i\leq n,$ we have the following:

\begin{enumerate}
\item $X_{i}\in M_{i}\cap\mathcal{A}_{<\delta}$ and is both perfect and unbounded.

\item $X_{i}\left(  f_{\delta_{i}}\right)  \subseteq B_{i}.$
\end{enumerate}

\qquad\ \ 

By taking a reenumeration and possibly picking more elements of $\mathcal{U}%
_{\delta}\cup\mathcal{A}_{<\delta},$ we may assume that $\left\langle
M_{0},...,M_{n}\right\rangle $ is $\delta$-increasing and $\delta_{n}=\delta.$
Let $m\leq n$ be the least such that $\delta_{m}=\delta.$ We claim that
$X_{0}\left(  f_{\delta_{0}}\right)  ,...,X_{m-1}\left(  f_{\delta_{m-1}%
}\right)  ,X_{m},...,X_{n}\in$ $\mathcal{U}_{<\delta}\cup\mathcal{A}_{<\delta
}.$ Pick $i\leq n.$ We have the following cases:

\begin{enumerate}
\item If $B_{i}\in\mathcal{A}_{<\delta},$ then $X_{i}=K\left(  B_{i}\right)
\in\mathcal{A}_{\delta_{i}},$ so $X_{i}\left(  f_{\delta_{i}}\right)
\in\mathcal{U}_{\delta_{i}}$.

\item If $B_{i}\in\mathcal{U}_{\delta}$ and $i<m,$ then $X_{i}\in
\mathcal{A}_{\delta_{i}},$ so $X_{i}\left(  f_{\delta_{i}}\right)
\in\mathcal{U}_{\delta_{i}}$.

\item If $B_{i}\in\mathcal{U}_{\delta}$ and $m\leq i,$ we already knew that
$X_{i}\in\mathcal{A}_{<\delta}.$
\end{enumerate}

\qquad\ \ \ \qquad\ \ \ \ \ 

Recall that $\mathcal{U}_{<\delta}\cup\mathcal{A}_{<\delta}$ generates a
filter contained in \textsf{bscatt}$^{+}$, so $X_{0}\left(  f_{\delta_{0}%
}\right)  \cap...\cap X_{m-1}\left(  f_{\delta_{m-1}}\right)  \cap X_{m}%
\cap...\cap X_{n}\in$ \textsf{bscatt}$^{+}$. We are now in position to call
Lemma \ref{lemma pathway Gruff} and conclude that $%
{\textstyle\bigcap\limits_{i\leq n}}
X_{i}\left(  f_{\delta_{i}}\right)  \in$ \textsf{bscatt}$^{+}$. Since
$X_{i}\left(  f_{\delta_{i}}\right)  \subseteq B_{i},$ this finishes the proof
of the claim. We now invoke Zorn's Lemma and find $\mathcal{A}_{\delta
}\subseteq W_{\delta}$ extending $\mathcal{A}_{<\delta}$\ as desired.

\qquad\qquad\qquad

After completing the recursion, define $\mathcal{A=}%
{\textstyle\bigcup\limits_{\delta\in D}}
\mathcal{A}_{\delta}\ $and$\ \mathcal{U}$ as the set of all $B\subseteq
\mathbb{Q}$ for which there is $U\in%
{\textstyle\bigcup\limits_{\delta\in D}}
\mathcal{U}_{\delta}$ for which $B\subseteq U.$ We will now prove the following:

\begin{claim}
\qquad\ \ \ \qquad\ \ 

\begin{enumerate}
\item $\mathcal{U\cup A}$ generates a filter contained in \textsf{bscatt}%
$^{+}.$

\item $\mathcal{U=A}.$

\item $\mathcal{U}$ is an ultrafilter.

\item $\mathcal{U}$ is a Gruff ultrafilter.
\end{enumerate}
\end{claim}

\qquad\ \ \ \ 

The first point is easy, we now prove that $\mathcal{U=A}.$ We will first see
that $\mathcal{U\subseteq A}.$ Let $U\in\mathcal{U},$ find $\delta\in D$ and
$M\in\mathcal{S}$ such that there is $B\in\mathcal{U}_{\delta}$ for which
$B\subseteq U$ and $B,U\in M.$ It is clear that $\mathcal{U}_{\delta}%
\cup\mathcal{A}_{\delta}\cup\left\{  U\right\}  $ generates a filter contained
in \textsf{bscatt}$^{+}.$ Since $U\in W_{\delta},$ it follows by the
maximality of $\mathcal{A}_{\delta}$ that $U\in\mathcal{A}_{\delta}.$ We now
prove that $\mathcal{A\subseteq U}.$ Let $A\in\mathcal{A}_{\delta}$ for some
$\delta\in D.$ We now choose $\gamma\in D$ and $M\in\mathcal{S}$ such that
$\delta_{M}=\gamma$ and $A,\delta\in M.$ Since $A\in\mathcal{A}_{<\gamma},$ it
follows that $K=K\left(  A\right)  $ is also in $\mathcal{A}_{<\gamma}.$ In
this way, $K\left(  f_{\gamma}\right)  \in\mathcal{U}_{\gamma}$ and then
$A\in\mathcal{U}.$

\qquad\qquad\ \ 

The proof that $\mathcal{U}$ is an ultrafilter is similar to arguments used in
the proof of Theorems \ref{Teorema P punto} and \ref{Teorema Strong}. Finally,
it is Gruff since $%
{\textstyle\bigcup\limits_{\delta\in D}}
\mathcal{U}_{\delta}$ is a base of $\mathcal{U}$ consisting of perfect sets.
\end{proof}

\section{Combinatorics of elementary submodels \label{Seccion submodelos}}

Our current goal now is to prove that multiple $\mathfrak{d}$-pathways may
consistently exist. We will derive several combinatorial results concerning
countable elementary submodels which are the new insight for the main theorems
of the paper. The results in this section do not directly refer to pathways
and may be of independent interest.

\qquad\ \ \qquad\ \ \ 

Fix a regular cardinal $\kappa>\mathfrak{c}$ and $\trianglelefteq$ a well
order of \textsf{H}$\left(  \kappa\right)  .$ The following result is
well-known, we prove it for completeness.

\begin{lemma}
[\textsf{CH}]Assume that $M,N\in$ \textsf{Sub}$\left(  \kappa\right)  $ and
$\delta_{M}\leq\delta_{N}.$ \textsf{H}$\left(  \omega_{1}\right)  \cap
M\subseteq$ \textsf{H}$\left(  \omega_{1}\right)  \cap N.$
\label{Lema contencion Hw1}
\end{lemma}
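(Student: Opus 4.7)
The plan is to use CH to produce a definable enumeration of $\mathsf{H}(\omega_1)$ by $\omega_1$ and then read off each model's trace on $\mathsf{H}(\omega_1)$ from its height.

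First, since $\omega_1$ and $\mathsf{H}(\omega_1)$ are definable in $\mathsf{H}(\kappa)$, they belong to every $M \in \mathsf{Sub}(\kappa)$. Under \textsf{CH} we have $|\mathsf{H}(\omega_1)| = \aleph_1$, so in $\mathsf{H}(\kappa)$ we can define, using $\trianglelefteq$ as the parameter supplied by the language of $\mathsf{Sub}(\kappa)$, the function $e : \omega_1 \longrightarrow \mathsf{H}(\omega_1)$ that is the $\trianglelefteq$-least bijection. Because $e$ is definable with no extra parameters (the well order $\trianglelefteq$ is a distinguished symbol in the language of elementarity), we get $e \in M$ and $e \in N$.

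Next I would show $\mathsf{H}(\omega_1) \cap M = e[\delta_M]$. For the inclusion $\supseteq$: if $\alpha \in \delta_M = M \cap \omega_1$, then $\alpha \in M$, and since $e \in M$ we have $e(\alpha) \in M$; of course $e(\alpha) \in \mathsf{H}(\omega_1)$. For the inclusion $\subseteq$: if $a \in \mathsf{H}(\omega_1) \cap M$, then by elementarity the unique $\alpha < \omega_1$ with $e(\alpha) = a$ lies in $M$, so $\alpha \in M \cap \omega_1 = \delta_M$, and hence $a = e(\alpha) \in e[\delta_M]$. The analogous identity $\mathsf{H}(\omega_1) \cap N = e[\delta_N]$ holds by the same reasoning.

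Finally, from $\delta_M \leq \delta_N$ we get $e[\delta_M] \subseteq e[\delta_N]$, and the lemma follows. The argument is largely a bookkeeping exercise; the only delicate point is making sure the enumeration $e$ lies in both models, which is why the appeal to the language extension by $\trianglelefteq$ (built into the definition of $\mathsf{Sub}(\kappa)$) is essential — without that, one would have to smuggle $e$ in as an external parameter and the argument would not go through uniformly for arbitrary $M, N \in \mathsf{Sub}(\kappa)$.
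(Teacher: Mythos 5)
Your proof is correct and is essentially the paper's own argument: both take the $\trianglelefteq$-least bijection $e:\omega_{1}\longrightarrow\mathsf{H}(\omega_{1})$ (available under \textsf{CH} and lying in every model of \textsf{Sub}$(\kappa)$ since $\trianglelefteq$ is part of the structure), establish $\mathsf{H}(\omega_{1})\cap M=e[\delta_{M}]$ and $\mathsf{H}(\omega_{1})\cap N=e[\delta_{N}]$, and conclude from $\delta_{M}\leq\delta_{N}$. You have merely written out the elementarity details that the paper leaves implicit.
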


\begin{proof}
Let $g:\omega_{1}\longrightarrow$ \textsf{H}$\left(  \omega_{1}\right)  $ be
the $\trianglelefteq$-minimal bijection, so it is in both $M$ and $N.$ Since
\textsf{H}$\left(  \omega_{1}\right)  \cap M=g\left[  \delta_{M}\right]  $ and
\textsf{H}$\left(  \omega_{1}\right)  \cap N=g\left[  \delta_{N}\right]  ,$
the result follows.
\end{proof}

\qquad\qquad\ \ 

We now extend the previous lemma:

\begin{lemma}
[\textsf{CH}]Let $M,N\in$ \textsf{Sub}$\left(  \kappa\right)  $ with
$\delta_{M}\leq\delta_{N}.$ If $A\in M\cap N$ and is a countable subset of the
ordinals, then $\mathcal{P}\left(  A\right)  \cap M\subseteq N.$
\label{Lema potencia contenida}
\end{lemma}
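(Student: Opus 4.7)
The plan is to reduce the statement to Lemma \ref{Lema contencion Hw1} by coding subsets of $A$ as subsets of $\omega$ through a bijection that lives in both models.

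First I would produce a common bijection $g\colon\omega\to A$. Since $A$ is countable, every bijection $\omega\to A$ exists; because $A\in M\cap N$ and the well order $\trianglelefteq$ is fixed in the ambient structure, elementarity of $M$ and $N$ as submodels of $(\mathsf{H}(\kappa),\in,\trianglelefteq)$ lets each model single out the $\trianglelefteq$-least bijection $g$ from $\omega$ onto $A$. This $g$ is uniquely definable from the parameters $A$ and $\trianglelefteq$, so the \emph{same} function $g$ sits in $M\cap N$.

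Next, take any $B\in\mathcal{P}(A)\cap M$. Since $g,B\in M$, the preimage $g^{-1}[B]$ is a subset of $\omega$ that belongs to $M$. As $g^{-1}[B]\subseteq\omega$, its transitive closure is countable, so $g^{-1}[B]\in\mathsf{H}(\omega_{1})\cap M$. Invoking Lemma \ref{Lema contencion Hw1} (which uses \textsf{CH} and the hypothesis $\delta_{M}\leq\delta_{N}$), we conclude $g^{-1}[B]\in \mathsf{H}(\omega_{1})\cap N$. Since $g\in N$ as well, applying $g$ inside $N$ yields $B=g[g^{-1}[B]]\in N$, which is exactly the desired conclusion.

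The only delicate step is the first one: we must obtain a single bijection $g$ that is simultaneously an element of both $M$ and $N$. That is precisely what the fixed well order $\trianglelefteq$ is for — it lets two different elementary submodels agree on a canonical choice. With that in hand, the rest is a routine transfer through $g$ plus an invocation of the previous lemma, and no new use of \textsf{CH} beyond what Lemma \ref{Lema contencion Hw1} already requires is needed.
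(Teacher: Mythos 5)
Your proof is correct and follows essentially the same route as the paper: transport $B$ into $\mathsf{H}(\omega_{1})$ via a canonical map in $M\cap N$, apply Lemma \ref{Lema contencion Hw1}, and pull back. The only cosmetic difference is that the paper uses the unique order isomorphism $e\colon A\to\mathsf{OT}(A)$ (definable from $A$ alone, no appeal to $\trianglelefteq$ needed) where you use the $\trianglelefteq$-least bijection $\omega\to A$; both are legitimate.
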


\begin{proof}
Let $B\in\mathcal{P}\left(  A\right)  \cap M$ and $\gamma=$ \textsf{OT}%
$\left(  A\right)  <\omega_{1}.$ Denote by $e:A\longrightarrow\gamma$ the
(unique) order isomorphism. Since $A\in M\cap N,$ it follows that $e\in M\cap
N.$ Clearly $e\left[  B\right]  \in$ \textsf{H}$\left(  \omega_{1}\right)
\cap M$, so $e\left[  B\right]  \in N$ by Lemma \ref{Lema contencion Hw1}.
Since $e^{-1}$ is also in $N,$ it follows that $B\in N.$
\end{proof}

\qquad\qquad\ \ 

If $A$ is a set of ordinals, we denote by $\overline{A}$ its closure in the
usual order topology. It is easy to see that the closure of a countable set is
also countable. In particular, if $M\in$ \textsf{Sub}$\left(  \kappa\right)  $
and $A\in M$ is a countable set of ordinals, then $\overline{A}\subseteq M.$
For us, \emph{a partition} $P$ is simply a collection of pairwise disjoint
sets ($\emptyset\in P$ is allowed) and a partition for a set $A$ is a
partition whose union is $A.$

\begin{lemma}
[\textsf{CH}]Let $M,N\in$\textsf{Sub}$\left(  \kappa\right)  $ with
$\delta_{M}\leq\delta_{N},$ $n\in\omega$ and $A,B\in\left[  \omega_{n}\right]
^{\leq\omega}$ such that $A\in M$ and $B\in N.$ \label{partir conjunto en dos}

\begin{enumerate}
\item There is a partition $P=\left\{  A_{0},A_{1}\right\}  \in M$ of $A$ such
that $A_{0}\in N$ and $A_{1}\cap B=\emptyset.$

\item $A\cap B\in N .$
\end{enumerate}
\end{lemma}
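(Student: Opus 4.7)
My plan is to prove both parts simultaneously by induction on $n$. Part 2 is immediate from Part 1: once we have the partition $\{A_0, A_1\}$, the identity $A \cap B = A_0 \cap B$ together with $A_0, B \in N$ gives $A \cap B \in N$. So I focus on Part 1.

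The base case $n \leq 1$ is straightforward: a countable $A \subseteq \omega_1$ lies in $\textsf{H}(\omega_1) \cap M$, so Lemma \ref{Lema contencion Hw1} gives $A \in N$, and we take the trivial partition $\{A, \emptyset\} \in M$. For the inductive step $n \geq 2$, the natural strategy is to reduce to a smaller cardinal via the $\trianglelefteq$-canonical bijection $h_\alpha : |\alpha| \to \alpha$, where $\alpha = \sup A + 1 \in M$ and $|\alpha| \leq \omega_{n-1}$ (the inequality holds because $A$ is countable and $\omega_n$ has uncountable cofinality). Since $h_\alpha$ is $\trianglelefteq$-definable from $\alpha$, it lies in $M$; when $\alpha \in N$ as well, we get $h_\alpha \in N$, and we may pull back $A$ and $B \cap \alpha$ via $h_\alpha^{-1}$ to a problem at level $|\alpha| \leq \omega_{n-1}$, apply the inductive hypothesis, and push forward the resulting partition via $h_\alpha$.

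The main obstacle is the case $\alpha \notin N$, where the direct transport via $h_\alpha$ is unavailable. My plan for this case is to find a countable $X \in M \cap N$ with $A \cap B \subseteq X$: given such $X$, taking $A_0 = A \cap X$ gives $A_0 \in M$ (since $A, X \in M$) and $A_0 \in N$ by Lemma \ref{Lema potencia contenida} applied to $X$ (so that $\mathcal{P}(X) \cap M \subseteq N$), while $A_1 = A \setminus A_0$ is automatically disjoint from $B$ since $A \cap B \subseteq X$. To construct $X$, I would split $A$ into cardinal levels $A \cap [\omega_k, \omega_{k+1})$ for $k < n$; for levels $k < n-1$ the canonical bijection between $\omega_{k+1}$ and $[\omega_k, \omega_{k+1})$ lies in $M \cap N$ (because the endpoints do), permitting an inductive reduction. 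The crux is the top level $k = n-1$, where the key observation is that every element of $A \cap B$ at this level automatically lies in $M \cap N$ (since $A \subseteq M$ and $B \subseteq N$); the plan is to assemble $X$ from the canonical $\trianglelefteq$-bijections $h_{a+1}$ associated with these elements, which do sit in $M \cap N$. Making this final assembly rigorous is the hardest part of the argument.
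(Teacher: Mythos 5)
Your overall architecture is sound: part 2 does follow from part 1 exactly as you say, the trivial partition $\left\{ A,\emptyset\right\}$ handles the base case (the paper's convention explicitly allows $\emptyset$ in a partition, and this choice is actually cleaner than the one written in the paper, whose $A_{0}=A\cap B$ is not obviously in $M$), and the reduction of part 1 to finding a countable $X\in M\cap N$ with $A\cap B\subseteq X$ is correct --- indeed it is equivalent to part 1, since conversely $A_{0}$ itself would serve as such an $X$. The difficulty is that you have pushed the entire content of the lemma into that reduction and then left it unresolved. The proposed assembly of $X$ at the top level from the bijections $h_{a+1}$ for $a\in A\cap B\cap\lbrack\omega_{n-1},\omega_{n})$ cannot work as described: each individual $h_{a+1}$ lies in $M\cap N$, but combining countably many of them into a single set requires the index set $A\cap B$ (or some code for it) as a parameter, and that is precisely the object whose membership in $N$ is at issue. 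In effect you are trying to cover a countable subset of $M\cap N\cap\mathrm{Ord}$ by a countable element of $M\cap N$; for arbitrary such subsets this is false even for a single model (a cofinal $\omega$-sequence in $\delta_{M}$ cannot be covered by a countable member of $M$), so some special feature of $A\cap B$ must be exploited, and your sketch does not isolate it.

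The missing idea, which is the heart of the paper's argument, is to work with $\beta=\sup\left(  A\cap B\right)  +1$ rather than with $\sup A+1$ or with the individual elements of $A\cap B$. Since $A\in M$ is countable, $\overline{A}\in M$ and hence $\overline{A}\subseteq M$; likewise $\overline{B}\subseteq N$; and $\sup\left(  A\cap B\right)  \in\overline{A\cap B}\subseteq\overline{A}\cap\overline{B}$, so $\beta\in M\cap N$. Now a \emph{single} transport map suffices: the $\trianglelefteq$-least injection $h:\beta\longrightarrow\omega_{n-1}$ lies in $M\cap N$, one applies the inductive hypothesis to $C=h\left[  A\cap\beta\right]  \in M$ and $D=h\left[  B\cap\beta\right]  \in N$ to get $\left\{  C_{0},C_{1}\right\}  \in M$ with $C_{0}\in N$ and $C_{1}\cap D=\emptyset$, and one sets $A_{0}=h^{-1}\left[  C_{0}\right]  $ and $A_{1}=A\setminus A_{0}$; any $\alpha\in A_{1}\cap B$ would satisfy $\alpha<\beta$ and $h\left(  \alpha\right)  \in C_{1}\cap D=\emptyset$. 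With this one observation your case split on whether $\sup A+1\in N$ and the level-by-level decomposition both become unnecessary.
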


\begin{proof}
Note that the second point is a trivial consequence of the first. We prove the
first point by induction over $n.$ For $n\leq1,$ we have that $A\in N$ by
Lemma \ref{Lema contencion Hw1}, so we simply let $A_{0}=A\cap B$ and
$A_{1}=A\setminus B.$ Assume the lemma is true for $n,$ we prove that it is
true for $n+1$ as well. Denote $\beta=%
{\textstyle\bigcup}
\overline{A\cap B}+1$ and note that $\beta\in M\cap N.$ Fix $h:\beta
\longrightarrow\omega_{n}$ be the $\trianglelefteq$-least injective function,
clearly $h\in M\cap N.$ Define $C=h\left[  A\cap\beta\right]  $ and
$D=h\left[  B\cap\beta\right]  ,$ we have that $C\in M$ and $D\in N.$ We can
now apply the inductive hypothesis and find a partition $\left\{  C_{0}%
,C_{1}\right\}  \in M$ of $C$ such that $C_{0}\in N$ and $C_{1}\cap
D=\emptyset.$ Letting $A_{0}=h^{-1}\left(  C_{0}\right)  $ and $A_{1}%
=A\setminus h^{-1}\left(  C_{0}\right)  ,$ we have that $\left\{  A_{0}%
,A_{1}\right\}  \in M$ and $A_{0}\in N.$ We only need to prove that $A_{1}\cap
B=\emptyset.$ Assume that there is $\alpha\in A_{1}\cap B,$ it follows that
$\alpha<\beta$ and $h\left(  \alpha\right)  \in C\setminus C_{0}=C_{1}.$ In
this way, $h\left(  \alpha\right)  \in C_{1}\cap D,$ but this is a
contradiction since $C_{1}\cap D=\emptyset.$
\end{proof}

\qquad\ \ \qquad\ \ 

The following definition plays a similar role to the finite partitions used in
\cite{PFiltersCohenRandomLaver}.

\begin{definition}
Let $\left\langle M_{0},...,M_{n}\right\rangle $ be a $\delta$-increasing
sequence of models from \textsf{Sub}$\left(  \kappa\right)  $. We say that
$\mathcal{P}=\left\langle P_{i}\mid i\leq n\right\rangle $ i\emph{s a coherent
sequence of partitions for} $\left\langle M_{0},...,M_{n}\right\rangle $ if
for every $i\leq n,$ the following conditions hold:

\begin{enumerate}
\item $P_{i}\in M_{i}$ and is a finite partition consisting of countable
subsets of the ordinals.

\item If $i<j,$ then $P_{i}\cap M_{j}\subseteq P_{j}.$

\item $%
{\textstyle\bigcup\limits_{j\leq n}}
P_{j}$ is a partition.
\end{enumerate}
\end{definition}

\qquad\qquad\ \ 

The next lemma illustrates how to construct non-trivial coherent sequences of partitions.

\begin{lemma}
[\textsf{CH}]Let $\left\langle M_{0},...,M_{n}\right\rangle $ be a $\delta
$-increasing sequence of models from \textsf{Sub}$\left(  \kappa\right)  $,
$l\in\omega$ and $A_{i}\in M_{i}\cap\left[  \omega_{l}\right]  ^{\leq\omega}$
for every $i\leq n.$ There is $\mathcal{P}=\left\langle P_{i}\mid i\leq
n\right\rangle $ a coherent sequence of partitions for $\left\langle
M_{0},...,M_{n}\right\rangle $ such that $A_{i}\subseteq\cup P_{i}$ for every
$i\leq n.$ \label{Lema particion que cubre}
\end{lemma}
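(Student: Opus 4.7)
The plan is to prove a strengthened version of the statement by induction on $n$, in which each model $M_i$ is allowed to carry a finite family $\mathcal{F}_i\subseteq M_i\cap[\omega_l]^{\leq\omega}$ rather than a single set, and in which we additionally require that every $A\in\mathcal{F}_i$ be a \emph{union} of elements of $P_i$ (not merely a subset of $\bigcup P_i$). The lemma as stated follows by taking $\mathcal{F}_i=\{A_i\}$. This strengthening is needed because the inductive step will pre-split each input using Lemma \ref{partir conjunto en dos} and feed the resulting pieces back through the induction; the union-of-elements condition is exactly what lets the recursive partitions align with these splittings.

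For the base case $n=0$, take $P_0$ to consist of the atoms of the Boolean algebra generated by $\mathcal{F}_0$ inside $\bigcup\mathcal{F}_0$; this is a finite element of $M_0$ and each $A\in\mathcal{F}_0$ is a union of atoms. For the inductive step, first apply Lemma \ref{partir conjunto en dos} to each $A\in\mathcal{F}_i$ (for $i<n$) with $M=M_i$, $N=M_n$, and $B=\bigcup\mathcal{F}_n$, obtaining $\{A^0,A^1\}\in M_i$ with $A^0\in M_n$ and $A^1\cap\bigcup\mathcal{F}_n=\emptyset$. Enlarge $\mathcal{F}_i$ to $\mathcal{F}_i'=\mathcal{F}_i\cup\{A^0,A^1:A\in\mathcal{F}_i\}$, which is still a finite element of $M_i$, and apply the inductive hypothesis to $\langle M_0,\ldots,M_{n-1}\rangle$ with families $\mathcal{F}_0',\ldots,\mathcal{F}_{n-1}'$ to produce $P_0,\ldots,P_{n-1}$. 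Finally set $P_n=\mathcal{S}_n\cup\mathcal{N}_n$, where $\mathcal{S}_n=\bigcup_{i<n}(P_i\cap M_n)$ (a finite subset of $M_n$, hence an element of $M_n$) gathers the inherited pieces, and $\mathcal{N}_n\in M_n$ is the collection of atoms of the Boolean algebra of $\mathcal{F}_n$ restricted to $\bigcup\mathcal{F}_n\setminus\bigcup\mathcal{S}_n$.

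The crucial verification is the global partition condition. Given $Y\in P_i$ with $i<n$, the strengthened hypothesis at level $i$ puts $Y$ inside some $A\in\mathcal{F}_i$, and because $A^0$ and $A^1$ are themselves unions of elements of $P_i$, the set $Y$ lies entirely in one of them. If $Y\subseteq A^0$, then $A^0\in M_i\cap M_n$ and $Y\in M_i$, so Lemma \ref{Lema potencia contenida} gives $Y\in M_n$ and hence $Y\in\mathcal{S}_n\subseteq P_n$; if $Y\subseteq A^1$, then $Y\cap\bigcup\mathcal{F}_n=\emptyset$, so $Y$ is disjoint from every element of $\mathcal{N}_n$. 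Either way $Y$ is compatible with $P_n$, which establishes both disjointness across levels and coverage $\bigcup\mathcal{F}_n\subseteq\bigcup P_n$.

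The main obstacle is propagating the ``union of elements'' condition to level $n$ itself, since one needs each $A\in\mathcal{F}_n$ to be a union of elements of $P_n$ in order to feed the next induction step. This forces the pre-splitting in step one to be performed with respect to \emph{each} $B\in\mathcal{F}_n$ individually, not merely with respect to $\bigcup\mathcal{F}_n$, so that no element of $\mathcal{S}_n$ can straddle a member of $\mathcal{F}_n$. The $\trianglelefteq$-canonicity of the partitions produced by Lemma \ref{partir conjunto en dos}, combined with a second application of Lemma \ref{Lema potencia contenida} to compare the $M_n$-sides of these finer splittings, is what keeps the straddling under control and closes the induction.
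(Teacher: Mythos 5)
Your overall strategy --- strengthening the inductive hypothesis so that every member of $\mathcal{F}_i$ is a \emph{union} of cells of $P_i$, pre-splitting the lower families against $\mathcal{F}_n$, and re-running the induction --- is genuinely different from the paper's, but it has a gap that you yourself flag in your last paragraph and do not actually close. The strengthened hypothesis must hold at the top level of each application of the induction (when you pass from $\left\langle M_{0},...,M_{n}\right\rangle$ to $\left\langle M_{0},...,M_{n+1}\right\rangle$ you invoke it at level $n$, the top of the previous stage), yet your construction of $P_{n}=\mathcal{S}_{n}\cup\mathcal{N}_{n}$ does not deliver it: a cell $Y\in P_{i}\cap M_{n}\subseteq\mathcal{S}_{n}$ can straddle a member $B$ of $\mathcal{F}_{n}$. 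Splitting each lower $A$ against each $B\in\mathcal{F}_{n}$ individually does not help, because Lemma \ref{partir conjunto en dos} only guarantees $A^{1}\cap B=\emptyset$ and $A^{0}\in M_{n}$; nothing forces a cell $Y\subseteq A^{0}$ to be contained in $B$ or disjoint from it. Knowing $Y\in M_{n}$ and $B\in M_{n}$ does give $Y\cap B\in M_{n}$, but replacing $Y$ by $\left\{  Y\cap B,Y\setminus B\right\}$ inside $P_{n}$ destroys the coherence requirement $P_{i}\cap M_{n}\subseteq P_{n}$ and the global disjointness of $\bigcup_{j}P_{j}$, since $Y$ itself remains a cell of $P_{i}$. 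The appeal to $\trianglelefteq$-canonicity does not address this; the obstruction is combinatorial, not one of definability. A secondary unstated assumption --- that every $Y\in P_{i}$ lies inside some member of $\mathcal{F}_{i}$, i.e. $\bigcup P_{i}\subseteq\bigcup\mathcal{F}_{i}$ --- is also not maintained by your step, since $\mathcal{S}_{n}$ imports cells from lower levels that need not lie in $\bigcup\mathcal{F}_{n}$.

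The paper avoids the alignment problem by never asking for it: its induction keeps only the coverage condition $A_{i}\subseteq\bigcup P_{i}$. Given a coherent sequence $\left\langle P_{i}\mid i\leq n\right\rangle$ covering $A_{0},...,A_{n}$, each cell $C$ (taken at the first level $i$ where it occurs) is split by Lemma \ref{partir conjunto en dos} into $\left\{  C_{0},C_{1}\right\}  \in M_{i}$ with $C_{0}\in M_{n+1}$ and $C_{1}\cap A_{n+1}=\emptyset$; Lemma \ref{Lema potencia contenida} shows both pieces reappear at every later level where $C$ did, so the refined sequence $\left\langle R_{i}\right\rangle$ is still coherent and still covers $A_{0},...,A_{n}$. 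The new top level then consists of the refined cells that happen to lie in $M_{n+1}$ together with the \emph{single} leftover cell $A_{n+1}\setminus\bigcup\bigl(\bigcup_{i\leq n}R_{i}\bigr)$, which lies in $M_{n+1}$ because each $C_{u}\cap A_{n+1}$ does. Since the uncovered remainder is collected into one new cell rather than matched against the old ones, no cell ever has to align with $A_{n+1}$, and the induction closes with no strengthening of the statement. If you want to rescue your argument, the cleanest repair is to drop the union-of-cells requirement and adopt this refine-then-top-off scheme.
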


\begin{proof}
We proceed by induction over $n.$ For $n=0$ we simple take $P_{0}=\left\{
A_{0}\right\}  $ and we are done. Assume the lemma is true for $n,$ we will
see it is true for $n+1$ as well. Find $\left\langle P_{i}\mid i\leq
n\right\rangle $ a coherent sequence of partitions for $\left\langle
M_{0},...,M_{n}\right\rangle $ such that $A_{i}\subseteq\cup P_{i}$ for every
$i\leq n.$

\begin{claim}
There is $\left\langle R_{i}\mid i\leq n\right\rangle $ a coherent sequence of
partitions for $\left\langle M_{0},...,M_{n}\right\rangle $ with the following properties:

\begin{enumerate}
\item $\cup P_{j}\subseteq\cup R_{j}$ for every $j\leq n.$

\item For every $B\in%
{\textstyle\bigcup\limits_{i\leq n}}
R_{i},$ we have that $B\cap A_{n+1}\in M_{n+1}.$
\end{enumerate}
\end{claim}

\qquad\ \ 

Denote $P=%
{\textstyle\bigcup\limits_{i\leq n}}
P_{i}.$ Pick $C\in P$ and $i\leq n$ the minimal one for which $C\in P_{i}.$ We
can apply Lemma \ref{partir conjunto en dos} and find a partition $\left\{
C_{0},C_{1}\right\}  \in M_{i}$ of $C$ such that $C_{0}\in M_{n}$ and
$A_{n+1}\cap C_{1}=\emptyset.$ Define $R=\left\{  C_{u}\mid C\in P\wedge
u\in2\right\}  $ and for every $i\leq n,$ denote $R_{i}=R\cap M_{i}.$ We claim
that $\left\langle R_{i}\mid i\leq n\right\rangle $ is as desired. Clearly $R$
is a partition, if $B\in R$ then $B\cap A_{n+1}\in M_{n+1}$ and if $i<j,$ then
$R_{i}\cap M_{j}=R\cap M_{i}\cap M_{j}=R_{j}\cap M_{i}\subseteq R_{j}.$ It
remains to prove that $\cup P_{j}\subseteq\cup R_{j}$ for every $j\leq n.$ Let
$C\in P_{j}$ and find $i\leq j$ the first one for which $C\in P_{i}.$ Note
that $C\in M_{i}\cap M_{j}$ and $\delta_{i}\leq\delta_{j},$ so by Lemma
\ref{Lema potencia contenida}, we know that $\mathcal{P}\left(  C\right)  \cap
M_{i}\subseteq M_{j},$ which entails that $C_{0},C_{1}\in M_{j},$ hence
$C_{0},C_{1}\in R_{j}.$ This finishes the proof of the claim.

\qquad\qquad\qquad

Fix $\left\langle R_{i}\mid i\leq n\right\rangle $ as above. \ Define
$R_{n+1}=(M_{n}\cap%
{\textstyle\bigcup\limits_{i\leq n}}
R_{i})\cup\{A_{n+1}\setminus%
{\textstyle\bigcup\limits_{i\leq n}}
R_{i}\}.$ It is easy to see that $\left\langle R_{i}\mid i\leq
n+1\right\rangle $ is as desired.
\end{proof}

\qquad\ \ \qquad\ \ 

We now prove the final result of this section, which will enable us to
transfer certain names across elementary submodels.

\begin{proposition}
[\textsf{CH}]Let $\left\langle M_{0},...,M_{n}\right\rangle $ be a $\delta
$-increasing sequence of models from \textsf{Sub}$\left(  \kappa\right)  $,
$l\in\omega$ and $\mathcal{P}=\left\langle P_{i}\mid i\leq n\right\rangle $ a
coherent sequence of partitions for $\left\langle M_{0},...,M_{n}\right\rangle
$ where $P_{i}\subseteq\left[  \omega_{l}\right]  ^{\leq\omega}.$ There is a
bijection $\triangle:\omega_{l}\longrightarrow\omega_{l}$ with the following
properties: \label{prop permutacion}

\begin{enumerate}
\item If $A\in%
{\textstyle\bigcup\limits_{i\leq n}}
P_{i},$ then $\triangle\left[  A\right]  \in M_{n}$ and $\triangle
\upharpoonright A$ is order preserving.

\item If $A\in P_{n},$ then $\triangle\upharpoonright A$ is the identity mapping.
\end{enumerate}
\end{proposition}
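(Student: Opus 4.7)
The plan is to construct $\triangle$ explicitly by prescribing it piece by piece. Let $Q := \bigcup_{i \leq n} P_i$, which is a finite set of countable, pairwise disjoint subsets of $\omega_l$, and write $U := \bigcup Q$. I will take $\triangle$ to be the identity on $\bigcup P_n$, to be the unique order isomorphism $\phi_A : A \to A'$ on each piece $A \in Q \setminus P_n$ (where $A'$ is a specially chosen subset of $\omega_l$ lying in $M_n$), and to be an arbitrary bijection from the complement $\omega_l \setminus U$ onto $\omega_l \setminus (\bigcup P_n \cup \bigcup_{A \in Q \setminus P_n} A')$. The entire difficulty will be in choosing the targets $A'$.

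Two observations will govern that choice. First, the coherence condition $P_i \cap M_n \subseteq P_n$ gives that for $A \in Q$ one has $A \in M_n$ if and only if $A \in P_n$; so for $A \in Q \setminus P_n$ we have $A \notin M_n$, but since $P_i$ is a \emph{finite} element of $M_i$ for some $i \leq n$, still $A \in M_i$, and then the ordinal $\gamma_A := \textsf{OT}(A)$ is an element of $M_i \cap \omega_1 = \delta_{M_i} \subseteq \delta_{M_n} = M_n \cap \omega_1$, so $\gamma_A \in M_n$. (For $l \leq 1$, every $A \in Q$ lies in \textsf{H}$(\omega_1) \cap M_i$ and hence in $M_n$ by Lemma \ref{Lema contencion Hw1}, so $Q \setminus P_n = \emptyset$ and $\triangle = \mathrm{id}$ trivially works.) Second, the following is a \textsf{ZFC}-theorem, provable inductively since $\omega_l$ has uncountable cofinality for $l \geq 1$: for every finite sequence $\gamma_1, \ldots, \gamma_k$ of countable ordinals and every countable $B \subseteq \omega_l$, there exist pairwise disjoint subsets $A'_1, \ldots, A'_k \subseteq \omega_l \setminus B$ with $\textsf{OT}(A'_j) = \gamma_j$.

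Applying this statement inside $M_n$ via elementarity, to the finite sequence $\langle \gamma_A : A \in Q \setminus P_n \rangle$ (which belongs to $M_n$ since it is finite with entries in $M_n$) and to $B := \bigcup P_n$ (which belongs to $M_n$ since $P_n$ is finite and in $M_n$), yields targets $A' \in M_n$ that are pairwise disjoint, all disjoint from $\bigcup P_n$, and of the required order types $\gamma_A$. With these targets in hand, the domain decomposition $\omega_l = \bigcup P_n \sqcup \bigcup(Q \setminus P_n) \sqcup (\omega_l \setminus U)$ is mapped by the prescribed $\triangle$ to $\bigcup P_n \sqcup \bigcup_{A \in Q \setminus P_n} A' \sqcup (\omega_l \setminus (\bigcup P_n \cup \bigcup A'))$ piece by piece, each restriction a bijection, so $\triangle$ is a bijection of $\omega_l$. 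Both properties are immediate: for $A \in P_n$, $\triangle \upharpoonright A = \mathrm{id}$ and $\triangle[A] = A \in M_n$; for $A \in Q \setminus P_n$, $\triangle \upharpoonright A = \phi_A$ is order-preserving and $\triangle[A] = A' \in M_n$.

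The main obstacle---and the real point of the construction---is realizing that the targets $A'$ do \emph{not} need to avoid $U \setminus \bigcup P_n$: even if some $A'$ overlaps a piece of $Q \setminus P_n$, no injectivity problem arises, because those pieces are themselves moved elsewhere by $\triangle$. Since $U \setminus \bigcup P_n$ is in general not an element of $M_n$, there would be no way to avoid it inside $M_n$; it is precisely this freedom that reduces the disjointness requirements to conditions---pairwise disjointness of the $A'$'s and avoidance of $\bigcup P_n$---that $M_n$ can verify by elementarity.
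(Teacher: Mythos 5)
Your proof is correct and follows essentially the same strategy as the paper's: fix $\bigcup P_n$ pointwise, send each remaining piece order-isomorphically onto a target in $M_n$ of the same order type (using that these order types lie below $\delta_{M_n}$ and hence in $M_n$), and extend arbitrarily to a permutation. The only difference is cosmetic: the paper produces the targets explicitly as ordinal intervals $[\beta(k+1),\beta(k+1)+\gamma_k)\subseteq\omega_1$ definable from parameters $\beta,\gamma_k\in M_n$, whereas you obtain them by elementarity from a \textsf{ZFC} existence lemma; both ensure the targets are pairwise disjoint, lie in $M_n$, and avoid $\bigcup P_n$, which (as you correctly observe) is all that is needed.
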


\begin{proof}
Take an enumeration $\left\{  A_{0},...,A_{m}\right\}  $ of all elements of $%
{\textstyle\bigcup\limits_{i<n}}
P_{i}$ that are not in $P_{n}$ and denote $Y=%
{\textstyle\bigcup}
{\textstyle\bigcup\limits_{i\leq n}}
P_{i}.$ Choose $\beta<\delta_{n}$ such that \textsf{OT}$\left(  Y\right)
<\beta$ and $Y\cap\omega_{1}\subseteq\beta.$ For each $k\leq m,$ denote
$\gamma_{k}=$ \textsf{OT}$\left(  A_{k}\right)  $ and $e_{k}:A_{k}%
\longrightarrow\gamma_{k}$ the (unique) order isomorphism. Define
$\triangle_{k}:A_{k}\longrightarrow\omega_{1}$ where $\triangle_{k}\left(
\alpha\right)  =\beta\left(  k+1\right)  +e_{k}\left(  \alpha\right)  $ and
note that \textsf{im}$\left(  \triangle_{k}\right)  =[\beta\left(  k+1\right)
,\beta\left(  k+1\right)  +\gamma_{k}),$ which belongs to $M_{n}$ since
$\beta,\gamma_{k}\in M_{n}.$ Moreover, note that if $k\neq r,$ then
\textsf{im}$\left(  \triangle_{k}\right)  \cap$ \textsf{im}$\left(
\triangle_{r}\right)  =\emptyset$ and if $A\in P_{n},$ then $A\cap\omega
_{1}\subseteq\beta$, while \textsf{im}$\left(  \triangle_{k}\right)  \cap
\beta=\emptyset$ for every $k\leq m,$ so \textsf{im}$\left(  \triangle
_{k}\right)  $ and $A$ are disjoint. In this way, we can extend $%
{\textstyle\bigcup\limits_{k\leq m}}
\triangle_{k}$ to a permutation of $\omega_{l}$ that fixes every element of
$P_{n}.$
\end{proof}

\section{Forcing multiple $\mathfrak{d}$-pathways \label{forzando pathways}}

We now apply the results from the previous section to establish the existence
of multiple $\mathfrak{d}$-pathways in certain random and Cohen models. Having
that goal in mind, we need to introduce a couple of definitions.

\begin{definition}
Let $\mathbb{P}$ be a partial order and $\dot{a},\dot{b}$ two $\mathbb{P}%
$-names. We say that $\dot{a}$ and $\dot{b}$ \emph{are equivalent }if
$\mathbb{P}\Vdash$\textquotedblleft$\dot{a}=\dot{b}$\textquotedblright.
\end{definition}

\qquad\qquad\qquad\ \ \ 

We now introduce the main definition of this Section.

\begin{definition}
Let $\mathbb{P}$ be a partial order. We say that $\mathbb{P}$ \emph{has the
transformation property }if for every large enough regular $\kappa$,
$\left\langle M_{0},...,M_{n}\right\rangle $ a $\delta$-increasing sequence of
models in \textsf{Sub}$\left(  \kappa\right)  $ where $\mathbb{P}\in M_{i}$
for every $i\leq n$, $\dot{a}_{0}\in M_{0},...,\dot{a}_{n}\in M_{n}$ that are
$\mathbb{P}$-names for subsets of $\omega,$ there is an automorphism
$H:\mathbb{P\longrightarrow P}$ such that:

\begin{enumerate}
\item $H\left(  \dot{a}_{n}\right)  =\dot{a}_{n}.$

\item For every $i<n$, we have that $H(\dot{a}_{i})$ is equivalent to a
$\mathbb{P}$-name that is in $M_{n}.$
\end{enumerate}
\end{definition}

It is worth pointing out that the automorphism $H$ is not required to be in
$M_{n}.$ Note that we obtain an equivalent definition if we take each $\dot
{a}_{i}$ to be a $\mathbb{P}$-name for an element of $\omega^{\omega}$, or
even a $\mathbb{P}$-name for a pair of functions from $\omega^{\omega}$
(instead of a $\mathbb{P}$-name for a subset of $\omega$).

\qquad\ \qquad\ \ \ \ \ \ \ \ \qquad\ \ \ 

We recommend the reader to consult Section \ref{Preliminares Forcing} as we
will be using the notation and results from there.

\begin{theorem}
[\textsf{CH}]Let $\mathbb{P}$ be a ccc forcing that does not add dominating
reals and has the transformation property. $\mathbb{P}$ forces that there is a
multiple $\mathfrak{d}$-pathway.
\end{theorem}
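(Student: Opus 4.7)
The plan is to take $\mathcal{B}=\left\{f_{\alpha}\mid\alpha<\omega_{1}\right\}$ to be a scale in the ground model $V$ (available by \textsf{CH}) and, in $V[G]$, to let $\mathcal{S}$ be the collection of all $N\in\textsf{Sub}\left(\kappa\right)^{V[G]}$ of the form $N=M\left[G\right]$ for some $M\in\textsf{Sub}\left(\kappa\right)^{V}$ containing $\mathcal{B}$ and $\mathbb{P}$. Stationarity of $\mathcal{S}$ in $V[G]$ is standard: given a name $\dot{F}$ for a club-witnessing function, choose $M\in\textsf{Sub}\left(\kappa\right)^{V}$ with $\dot{F},\mathcal{B},\mathbb{P}\in M$; the lifting $M\left[G\right]$ is an elementary submodel of $H\left(\kappa\right)^{V[G]}$ closed under $F$. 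The ccc of $\mathbb{P}$ together with elementarity yields $M\left[G\right]\cap\omega_{1}=M\cap\omega_{1}$, so $\delta_{M\left[G\right]}=\delta_{M}$ and the ``$V$-part'' $M$ of $N=M\left[G\right]$ is $\delta$-faithful.

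For the pathway property, suppose $\left\langle N_{0},\ldots,N_{n}\right\rangle$ is $\delta$-increasing in $\mathcal{S}$ with $N_{i}=M_{i}\left[G\right]$ and $\left\langle M_{0},\ldots,M_{n}\right\rangle$ correspondingly $\delta$-increasing in $\textsf{Sub}\left(\kappa\right)^{V}$. Let $R\in N_{n}$ be projective and $\leq^{\ast}$-adequate, coded by a real parameter $r_{R}\in N_{n}$ named by $\dot{r}\in M_{n}$, and let $x_{i}\in N_{i}\cap\omega^{\omega}$ have name $\dot{a}_{i}\in M_{i}$. Suppose toward contradiction that $R\left(x_{0},\ldots,x_{n},f_{\delta_{n}}\right)$ fails, so some $p\in G$ forces the negation. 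Apply the transformation property to $\left(\dot{a}_{0},\ldots,\dot{a}_{n-1},\dot{a}^{\ast}\right)$, where $\dot{a}^{\ast}\in M_{n}$ is the even/odd encoding into $\omega$ of the pair $\left(\dot{a}_{n},\dot{r}\right)$. Since this encoding is built from canonical ground model names, literal name-equality $H\left(\dot{a}^{\ast}\right)=\dot{a}^{\ast}$ forces literal name-equality in each component, so $H$ fixes both $\dot{a}_{n}$ and $\dot{r}$, while $H\left(\dot{a}_{i}\right)\in M_{n}$ for $i<n$. By Proposition \ref{Prop formula isomorfismo}, $H\left(p\right)\Vdash\neg R\left(H\left(\dot{a}_{0}\right),\ldots,H\left(\dot{a}_{n-1}\right),\dot{a}_{n},\check{f}_{\delta_{n}}\right)$, with the relation $R$ unchanged because its code $\dot{r}$ is fixed.

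Now all relevant names are in $M_{n}$: using elementarity and $\trianglelefteq$ pick a canonical name $\dot{h}\in M_{n}$ for an $R$-control of the tuple $\left(H\left(\dot{a}_{0}\right),\ldots,H\left(\dot{a}_{n-1}\right),\dot{a}_{n}\right)$. By $\leq^{\ast}$-adequacy and the fact that $\check{f}_{\delta_{n}}$ is increasing, $H\left(p\right)\Vdash\check{f}_{\delta_{n}}\leq^{\ast}\dot{h}$. Extend $H\left(p\right)$ to a $\mathbb{P}$-generic $G^{\ast}$ over $V$: in $V\left[G^{\ast}\right]$, we have $\dot{h}\left[G^{\ast}\right]\in M_{n}\left[G^{\ast}\right]$ and $f_{\delta_{n}}\leq^{\ast}\dot{h}\left[G^{\ast}\right]$. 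However, since $\mathbb{P}$ adds no dominating real, $\dot{h}\left[G^{\ast}\right]$ fails to dominate $V\cap\omega^{\omega}$, and since $\mathcal{B}$ is dominating in $V$, the set $A=\left\{\alpha<\omega_{1}\mid\dot{h}\left[G^{\ast}\right]\geq^{\ast}f_{\alpha}\right\}$ is a downward-closed \emph{proper} initial segment of $\omega_{1}$. Since $A$ is definable in $M_{n}\left[G^{\ast}\right]$ from $\dot{h}\left[G^{\ast}\right]$ and $\mathcal{B}$, its supremum $\gamma$ lies in $M_{n}\left[G^{\ast}\right]\cap\omega_{1}=\delta_{n}$, whence $\gamma<\delta_{n}$ and $\delta_{n}\notin A$, contradicting $f_{\delta_{n}}\leq^{\ast}\dot{h}\left[G^{\ast}\right]$.

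The main obstacle I expect is the need to fix \emph{both} $\dot{a}_{n}$ and the projective code $\dot{r}$ under the automorphism $H$, which is not granted by the transformation property as stated (only the last listed name is fixed). The even/odd encoding into a single name for a subset of $\omega$ circumvents this, since any automorphism that literally fixes the encoded name must literally fix each component. A secondary technical point is the identity $M\left[G\right]\cap\omega_{1}=M\cap\omega_{1}$: for any $\dot{\alpha}\in M$ forced to be a countable ordinal, ccc produces a countable set of possible values, which lies in $M$ by elementarity plus countability, hence is a subset of $\delta_{M}$.
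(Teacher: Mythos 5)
Your proposal is correct and takes essentially the same route as the paper: a ground-model scale, the lifted stationary family $\{M[G]\}$, and the transformation property combined with ccc plus no dominating reals to show $f_{\delta_{n}}$ escapes the $R$-control named in $M_{n}$. The differences are presentational only: you argue by contradiction and make explicit two steps the paper leaves implicit, namely the pairing of $\dot{a}_{n}$ with the projective code into a single name fixed by $H$, and the scale/initial-segment argument giving $H(p)\Vdash f_{\delta_{n}}\nleq^{\ast}\dot{h}$.
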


\begin{proof}
Choose $\mathcal{B}=\left\{  f_{\alpha}\mid\alpha\in\omega_{1}\right\}
\subseteq\omega^{\omega}$ a scale of increasing functions, $\kappa$ a large
enough regular cardinal such that $\mathbb{P\in}$ \textsf{H}$\left(
\kappa\right)  .$ Define $\mathcal{S}_{0}=\{M\mid M\in$ \textsf{Sub}$\left(
\kappa\right)  \wedge\mathcal{B},\mathbb{P}\in M\},$ which is stationary. We
claim that if $G\subseteq\mathbb{P}$ is a generic filter, then in $V\left[
G\right]  $ we will have that $\left(  \mathcal{B},\mathcal{S}\right)  $ is a
multiple $\mathfrak{d}$-pathway, where $\mathcal{S}=\{M\left[  G\right]  \mid
M\in$ $\mathcal{S}\}.$ Since $\mathbb{P}$ is ccc, $\mathcal{S}$ is forced to
be stationary, $M\left[  G\right]  \cap V=M$ and $\delta_{M\left[  G\right]
}=\delta_{M}$ for every $M\in\mathcal{S}_{0}$ (see \cite{ProperandImproper}
and \cite{NotesonForcingAxioms}).

\qquad\qquad\qquad\ \ \ 

Let $p\in\mathbb{P},$ $\left\langle M_{0},...,M_{n}\right\rangle $ a $\delta
$-increasing sequence of models in $\mathcal{S}_{0}$, $\dot{x}_{0}\in
M_{0},...,\dot{x}_{n}\in M_{n}$ that are $\mathbb{P}$-names for elements of
$\omega^{\omega}$ and $\dot{R}\in M_{n}$ a name for a projective and
$\leq^{\ast}$-adequate relation. We will now apply the transformation property
of $\mathbb{P}.$ For $i<n,$ let $\dot{a}_{i}=\dot{x}_{i}$, let $\dot{u}$ be a
$\mathbb{P}$-name for an element of $\omega^{\omega}$ that is forced to code
$\dot{R}$ (this is possible since $\dot{R}$ is forced to be a projective
relation) and let $\dot{a}_{n}$ be a $\mathbb{P}$-name for the pair $(\dot
{u},\dot{x}_{n}).$ By the transformation property, we can find an automorphism
$H:\mathbb{P\longrightarrow P}$ such that $H\left(  \dot{x}_{i}\right)  $ is
equivalent to a name in $M_{n}$ for every $i\leq n,$ $H\left(  \dot{x}%
_{n}\right)  =\dot{x}_{n}$ and $H(\dot{R})=\dot{R}.$ Since $\dot{R}$ is forced
to be $\leq^{\ast}$-adequate, there is $\dot{g}\in M_{n}$ a $\mathbb{P}$-name
that is forced to be an $\dot{R}$-control for $\left(  H\left(  \dot{x}%
_{0}\right)  ,...,H\left(  \dot{x}_{n}\right)  \right)  .$ Since $\mathbb{P}$
does not add dominating reals and is ccc, we know that $H\left(  p\right)
\Vdash$\textquotedblleft$f_{\delta_{n}}\nleq^{\ast}\dot{g}$\textquotedblright.
Therefore, we know that $H\left(  p\right)  \Vdash$\textquotedblleft$H(\dot
{R})\left(  H\left(  \dot{x}_{0}\right)  ,...,H\left(  \dot{x}_{n}\right)
,f_{\delta_{n}}\right)  $\textquotedblright\ and since $H$ is an isomorphism,
with the aid of Proposition \ref{Prop formula isomorfismo}, we conclude that
$p\Vdash$\textquotedblleft$\dot{R}\left(  \dot{x}_{0},...,\dot{x}%
_{n},f_{\delta_{n}}\right)  $\textquotedblright.
\end{proof}

\qquad\ \ \qquad\ \ 

Both Cohen and random forcings are ccc and do not add dominating reals (random
forcing does not even add unbounded reals, see \cite{Barty}). Our next goal is
to show that they have the transformation property. For the remainder of the
section, $l$ will denote a natural number, $\mathbb{P}\left(  I\right)  $ will
be either Cohen or random forcing (where $I\subseteq\omega_{l}$) and $\kappa$
a large enough regular cardinal.

\begin{proposition}
[\textsf{CH}]Let $M,N\in$ \textsf{Sub}$\left(  \kappa\right)  $ with
$\delta_{M}\leq\delta_{N},$ $I\in M\cap\left[  \omega_{l}\right]  ^{\omega}$
and $\dot{a}\in M$ a $\mathbb{P}\left(  I\right)  $-name for a subset of
$\omega.$ If $\triangle:\omega_{l}\longrightarrow\omega_{l}$ a permutation for
which there is $P\in M$ a finite partition of $I$ such that for every $A\in P$
we have that $\triangle\upharpoonright A$ is order preserving and
$\triangle\left[  A\right]  \in N,$ then $\triangle_{\ast}\left(  \dot
{a}\right)  $ is equivalent to a name in $N.$ \label{prop mandar nombres}
\end{proposition}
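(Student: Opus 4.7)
The plan is to show that the action of $\triangle_\ast$ on $\dot a$ reduces to that of $\triangle\upharpoonright K$ for a countable support $K\in M$, verify that both this restriction and $\dot a$ lie in $N$, and then invoke elementarity of $N$ to obtain $\triangle_\ast(\dot a)\in N$.

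First I would extract a countable support. Since $\dot a\in M$ is a $\mathbb{P}(I)$-name for a subset of $\omega$, by the comments at the end of Section \ref{Preliminares Forcing} there is a countable $K\subseteq I$ such that $\dot a$ is a $\mathbb{P}(K)$-name; taking $K$ to be the $\trianglelefteq$-least such set (and, if needed, replacing $\dot a$ by its $\trianglelefteq$-canonical nice name with conditions represented by their $\trianglelefteq$-minimal Baire-set codes) gives $K\in M$ and $\dot a\in \textsf{H}(\omega_1)\cap M$. Now $P\in M$ is finite and each $A\in P$ is a countable set of ordinals, so $P$, each $A\in P$, $K$, and every $K\cap A$ lie in $\textsf{H}(\omega_1)\cap M$, and hence in $N$ by Lemma \ref{Lema contencion Hw1}; in particular $\dot a\in N$.

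Next, because $\triangle\upharpoonright A$ is order-preserving and $\triangle[A]\in N$, I identify $\triangle\upharpoonright A$ with the unique order isomorphism $A\to\triangle[A]$; since its domain and codomain both lie in $N$, this function belongs to $N$. As $P$ is finite, the gluing $\sigma:=\triangle\upharpoonright K=\bigcup_{A\in P}\triangle\upharpoonright(K\cap A)$ also lies in $N$. Because every condition appearing inside $\dot a$ belongs to $\mathbb{P}(K)$, the recursive definition of $\triangle_\ast(\dot a)$ coincides with that of $\sigma_\ast(\dot a)$, which is computed by recursion on the rank of $\dot a$ from parameters living entirely in $N$. Elementarity of $N$ then yields $\triangle_\ast(\dot a)=\sigma_\ast(\dot a)\in N$. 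The point I expect to argue most carefully is that, in the random case where conditions are equivalence classes of Baire sets, $\sigma_\ast$ applied to a condition is absolute enough to be computed inside $N$ from $\sigma$ and a canonical Baire code; this reduces to absoluteness of the basic Borel operations on reals, which $N$ handles by elementarity.
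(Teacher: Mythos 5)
There is a genuine gap, and it occurs at the very first substantive step. You claim that since each $A\in P$ (and $K$, and $K\cap A$) is a countable set of ordinals, these sets lie in $\mathsf{H}\left(  \omega_{1}\right)  \cap M$ and hence in $N$ by Lemma \ref{Lema contencion Hw1}, and that likewise $\dot{a}\in\mathsf{H}\left(  \omega_{1}\right)  \cap M\subseteq N$. This is false for $l\geq2$: a countable set of ordinals below $\omega_{l}$ that contains an ordinal $\geq\omega_{1}$ has uncountable transitive closure, so it is \emph{not} an element of $\mathsf{H}\left(  \omega_{1}\right)  $, and Lemma \ref{Lema contencion Hw1} does not apply. (The same goes for a $\mathbb{P}\left(  K\right)  $-name whose conditions are Baire subsets of $2^{K}$ with $K\subseteq\omega_{l}$.) In fact, a countable subset of $\omega_{l}$ belonging to $M$ need not belong to $N$ at all, even when $\delta_{M}=\delta_{N}$; this is precisely the obstruction the proposition is designed to overcome, and it is why the hypothesis supplies only $\triangle\left[  A\right]  \in N$ rather than $A\in N$. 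If your claim were correct, the entire proposition would be trivial ($\dot{a}$ and $\triangle\upharpoonright I$ would already lie in $N$) and the order-preservation hypothesis would be superfluous. The subsequent step inherits the same problem: you identify $\triangle\upharpoonright A$ with the unique order isomorphism $A\rightarrow\triangle\left[  A\right]  $ and argue it is in $N$ because ``its domain and codomain both lie in $N$,'' but the domain $A$ is exactly what you are not entitled to place in $N$.

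The paper's proof shows what is actually needed to repair this. One transfers $\dot{a}$ into $\mathsf{H}\left(  \omega_{1}\right)  $ by an order isomorphism $h\in M$ built piecewise from the order isomorphisms $e_{i}:P_{i}\longrightarrow\mathsf{OT}\left(  P_{i}\right)  $, sending $I$ onto a subset of a countable ordinal $\beta\cdot\left(  n+1\right)  <\delta_{M}\leq\delta_{N}$; only \emph{then} does Lemma \ref{Lema contencion Hw1} apply, yielding $\dot{b}=h_{\ast}\left(  \dot{a}\right)  \in N$. The crucial point is that the inverse transfer back to a $\mathbb{P}\left(  \triangle\left[  I\right]  \right)  $-name is performed with a \emph{different} map $g$, defined on $\triangle\left[  I\right]  $ from the isomorphisms $\widehat{e}_{i}:\triangle\left[  P_{i}\right]  \longrightarrow\mathsf{OT}\left(  P_{i}\right)  $, which lies in $N$ because the sets $\triangle\left[  P_{i}\right]  $ (unlike the $P_{i}$) are in $N$ by hypothesis; the order-preservation of $\triangle$ on each piece is exactly what guarantees $h=g\circ\triangle$, so that $\triangle_{\ast}\left(  \dot{a}\right)  =g_{\ast}^{-1}\left(  h_{\ast}\left(  \dot{a}\right)  \right)  \in N$. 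Your outline contains no analogue of this two-sided transfer, so the argument as written does not go through.
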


\begin{proof}
Take an enumeration $P=\left\{  P_{i}\mid i\leq n\right\}  $ and choose
$\beta<\delta_{M}$ a limit ordinal larger than \textsf{OT}$\left(  I\right)
.$ For each $i\leq n,$ \ denote $\gamma_{i}=$ \textsf{OT}$\left(
P_{i}\right)  $ and $e_{i}:P_{i}\longrightarrow\gamma_{i}$ the only
isomorphism. Since $\triangle$ is order preserving in each $P_{i},$ we know
that $\gamma_{i}$ is isomorphic to $\triangle\left[  P_{i}\right]  $ as well.
Let $\widehat{e}_{i}:\triangle\left[  P_{i}\right]  \longrightarrow\gamma_{i}$
be the only isomorphism. Note that $e_{i}\in M$ and $\widehat{e}_{i}\in N.$ We
now define the function:

\qquad\ \ \ 

\hfill%
\begin{tabular}
[c]{lll}%
$h:I\longrightarrow\omega_{1}$ &  & $g:\triangle\left[  I\right]
\longrightarrow\omega_{1}$\\
\multicolumn{1}{c}{} & \multicolumn{1}{c}{} & \multicolumn{1}{c}{}\\
\multicolumn{3}{c}{Such that for every $\alpha\in P_{i}:$}\\
\multicolumn{1}{c}{} & \multicolumn{1}{c}{} & \multicolumn{1}{c}{}\\
$h\left(  \alpha\right)  =\beta i+e_{i}\left(  \alpha\right)  $ &  & $g\left(
\triangle\left(  \alpha\right)  \right)  =\beta i+\widehat{e}_{i}\left(
\triangle\left(  \alpha\right)  \right)  $\\
&  & \ \ \ \ \ \ \ \ \ \ \ \ \ $=\beta i+e_{i}\left(  \alpha\right)  $\\
&  & \ \ \ \ \ \ \ \ \ \ \ \ \ $=h\left(  \alpha\right)  $%
\end{tabular}
\qquad\ \hfill

\qquad\ \qquad\ \ \ \ \ \ \qquad\ \ \ \ \ 

Clearly $h=g\circ\triangle$ and \textsf{im}$\left(  h\right)  =$
\textsf{im}$\left(  g\right)  .$ We have the isomorphisms $h_{\ast}:$
$\mathbb{P}\left(  I\right)  \longrightarrow\mathbb{P}\left(  h\left[
I\right]  \right)  $ and $g_{\ast}:\mathbb{P}\left(  \triangle\left[
I\right]  \right)  \longrightarrow\mathbb{P}\left(  h\left[  I\right]
\right)  $. Note that $h,h_{\ast}\in M$ and $g,g_{\ast}\in N.$ Denote $\dot
{b}=h_{\ast}\left(  \dot{a}\right)  ,$ which is a $\mathbb{P}\left(  h\left[
I\right]  \right)  $-name. Since $h\left[  I\right]  $ is a countable subset
of $\omega_{1},$ since $\mathbb{P}\left(  h\left[  I\right]  \right)  $ is
ccc, we can code $\dot{b}$ as an element of $M\cap$\textsf{H}$\left(
\omega_{1}\right)  $ and by Lemma \ref{Lema contencion Hw1}, we conclude that
$\dot{b}\in N.$ In this way, we know that $\dot{c}=g_{\ast}^{-1}(\dot{b})$ is
a $\mathbb{P}\left(  \triangle\left[  I\right]  \right)  $-name that is in
$N.$ In this way, in order to prove that $\triangle_{\ast}\left(  \dot
{a}\right)  $ is in $N,$ it is enough to show that it is equal to $\dot{c}.$
Let $p\in\mathbb{P}\left(  \triangle\left[  I\right]  \right)  $ and
$n\in\omega.$ Using Propositions \ref{Prop formula isomorfismo} and
\ref{prop estrellita}, we obtain the following:

\qquad\qquad\ \ \ \ \ 

\hfill%
\begin{tabular}
[c]{lll}%
$p\Vdash$\textquotedblleft$n\in\dot{c}$\textquotedblright &
$\longleftrightarrow$ & $p\Vdash$\textquotedblleft$n\in g_{\ast}^{-1}(\dot
{b})$\textquotedblright\\
& $\longleftrightarrow$ & $g_{\ast}\left(  p\right)  \Vdash$\textquotedblleft%
$n\in\dot{b}$\textquotedblright\\
& $\longleftrightarrow$ & $g_{\ast}\left(  p\right)  \Vdash$\textquotedblleft%
$n\in h_{\ast}\left(  \dot{a}\right)  $\textquotedblright\\
& $\longleftrightarrow$ & $h_{\ast}^{-1}\circ g_{\ast}\left(  p\right)
\Vdash$\textquotedblleft$n\in\dot{a}$\textquotedblright\\
& $\longleftrightarrow$ & $\left(  h^{-1}\circ g\right)  _{\ast}\left(
p\right)  \Vdash$\textquotedblleft$n\in\dot{a}$\textquotedblright\\
& $\longleftrightarrow$ & $\triangle_{\ast}^{-1}\left(  p\right)  \Vdash
$\textquotedblleft$n\in\dot{a}$\textquotedblright\\
& $\longleftrightarrow$ & $p\Vdash$\textquotedblleft$n\in\triangle_{\ast
}\left(  \dot{a}\right)  $\textquotedblright%
\end{tabular}
\qquad\ \qquad\hfill

\qquad\qquad\qquad\qquad\ \ \ 

We conclude that $\triangle_{\ast}\left(  \dot{a}\right)  $ and $\dot{c}$ are
equivalent. Since $c\in N$, the proof is finished.
\end{proof}

\qquad\ \ \ \qquad\ \ \ \ 

We can finally prove:

\begin{proposition}
[\textsf{CH}]$\mathbb{P}\left(  \omega_{l}\right)  $ has the transformation
property.\footnote{Stepr\={a}ns recently found a more direct proof of this
result. It will appear in a forthcoming book he is writing with Raghavan.}
\end{proposition}

\begin{proof}
Let $\kappa$ be a regular large enough cardinal, $\left\langle M_{0}%
,...,M_{n}\right\rangle $ a $\delta$-increasing sequence of models from
\textsf{Sub}$\left(  \kappa\right)  $ and $\dot{a}_{0}\in M_{0},...,\dot
{a}_{n}\in M_{n}$ be $\mathbb{P}\left(  \omega_{l}\right)  $ names for subsets
of $\omega.$ For every $i\leq n,$ find $A_{i}\in M_{i}\cap\left[  \omega
_{l}\right]  ^{\omega}$ such that $\dot{a}_{i}$ is a $\mathbb{P}\left(
A_{i}\right)  $-name. We now use Lemma \ref{Lema particion que cubre} to
summon $\mathcal{P}=\left\langle P_{i}\mid i\leq n\right\rangle $ a coherent
sequence of partitions for $\left\langle M_{0},...,M_{n}\right\rangle $ such
that $A_{i}\subseteq\cup P_{i}$ for every $i\leq n.$ Denote $I_{i}=\cup P_{i}$
for every $i\leq n.$ Clearly $I_{i}\in M_{i}$ and $\dot{a}_{i}$ is a
$\mathbb{P}\left(  I_{i}\right)  \,$-name. We now invoke Proposition
\ref{prop permutacion} to find a permutation $\triangle:\omega_{l}%
\longrightarrow\omega_{1}$ such that is the identity in every element of
$P_{n}$ and for every $B\in%
{\textstyle\bigcup}
P_{i}$ it is the case that $\triangle\left[  B\right]  \in M_{n}$ and
$\triangle\upharpoonright B$ is order preserving. Finally, by Proposition
\ref{prop mandar nombres}, we know that $\triangle_{\ast}\left(  \dot{a}%
_{i}\right)  $ is equivalent to a name in $M_{n}$ for every $i\leq n.$
Moreover, since $\triangle\upharpoonright I_{n}$ is the identity, we get that
$\triangle_{\ast}\left(  \dot{a}_{n}\right)  =\dot{a}_{n}.$
\end{proof}

\qquad\ \ \qquad\ \ 

We can now conclude:

\begin{corollary}
[\textsf{CH}]Let $l<\omega.$ Both $\mathbb{C}\left(  \omega_{l}\right)  $ and
$\mathbb{B}\left(  \omega_{l}\right)  $ force that there is a multiple
$\mathfrak{d}$-pathway.
\end{corollary}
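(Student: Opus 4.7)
The plan is to verify the hypotheses of the earlier theorem (ccc forcing without dominating reals that has the transformation property forces a multiple $\mathfrak{d}$-pathway) for both $\mathbb{C}(\omega_l)$ and $\mathbb{B}(\omega_l)$, and then invoke that theorem. The transformation property is already established by the immediately preceding proposition, so there are really only two residual facts about Cohen and random forcing to cite, both of them standard.

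First I would recall that the finite-support products $\mathbb{C}(\omega_l) = \mathsf{Baire}(2^{\omega_l})/\mathcal{M}_{\omega_l}$ and $\mathbb{B}(\omega_l) = \mathsf{Baire}(2^{\omega_l})/\mathcal{N}_{\omega_l}$ are both ccc: for Cohen this is the standard $\Delta$-system argument on finite Boolean combinations of basic clopen sets, and for random it follows from the finiteness of the product measure (any uncountable antichain would force a bounded-measure set to contain uncountably many pairwise disjoint pieces of uniformly positive measure). Next I would quote that neither forcing adds dominating reals; in fact random forcing is $\omega^\omega$-bounding (see \cite{Barty}), and Cohen forcing, while it adds unbounded reals, does not add dominating reals because the ground model reals are $\leq^{\ast}$-cofinal in any Cohen extension.

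Combining these two standard facts with the preceding proposition that $\mathbb{P}(\omega_l)$ has the transformation property (for $\mathbb{P}$ either Cohen or random), the hypotheses of the main theorem of this section are satisfied for both forcings, so each forces the existence of a multiple $\mathfrak{d}$-pathway. Since there is essentially nothing to do beyond assembling earlier results, the only possible obstacle is notational: one should make sure that when the theorem refers to \textsf{CH} in the ground model, we have it (which is a standing assumption of the corollary), and that the scale $\mathcal{B}$ used inside the theorem can indeed be fixed in the ground model, which is immediate from \textsf{CH}. No further argument is needed.
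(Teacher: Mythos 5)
Your proposal is correct and follows exactly the route the paper intends: the corollary is an immediate assembly of the preceding theorem (ccc $+$ no dominating reals $+$ transformation property $\Rightarrow$ multiple $\mathfrak{d}$-pathway) with the proposition that $\mathbb{P}(\omega_{l})$ has the transformation property, together with the standard facts that Cohen and random forcing are ccc and add no dominating reals, which the paper itself records just before these results. Nothing further is needed.
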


\qquad\ \ \qquad

In particular:

\begin{theorem}
[\textsf{CH}]Let $l<\omega.$ $\mathbb{B}\left(  \omega_{l}\right)  $ force
that there is a strong \textsf{P}-point and a Gruff ultrafilter.
\end{theorem}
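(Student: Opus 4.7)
The plan is essentially to chain together three results already established in the paper. First, by the preceding corollary, assuming \textsf{CH} in the ground model, the forcing $\mathbb{B}(\omega_l)$ forces that there exists a multiple $\mathfrak{d}$-pathway $(\mathcal{B},\mathcal{S})$. Once this is in hand, the conclusion is immediate: Theorem \ref{Teorema Strong} (a \textsf{ZFC} statement) says that the existence of a multiple $\mathfrak{d}$-pathway implies the existence of a strong \textsf{P}-point, and the Gruff ultrafilter theorem from Section \ref{Gruff} says that the existence of a multiple $\mathfrak{d}$-pathway implies the existence of a Gruff ultrafilter. Both theorems apply inside any model of \textsf{ZFC}, in particular inside $V^{\mathbb{B}(\omega_l)}$.

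So I would simply let $G$ be a $\mathbb{B}(\omega_l)$-generic filter over $V$ and work in $V[G]$. The corollary provides a multiple $\mathfrak{d}$-pathway in $V[G]$; invoking Theorem \ref{Teorema Strong} inside $V[G]$ produces a strong \textsf{P}-point on $\omega$, and invoking the Gruff theorem inside $V[G]$ produces a Gruff ultrafilter on $\mathbb{Q}$. Since these objects exist in $V[G]$, the theorem is proved.

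There is essentially no obstacle, because all the real work has already been done: the combinatorial content lies in constructing a strong \textsf{P}-point (respectively a Gruff ultrafilter) from the pathway, and in forcing the pathway itself using the transformation property of $\mathbb{B}(\omega_l)$ together with the results on coherent sequences of partitions from Section \ref{Seccion submodelos}. The statement of the final theorem is really just a packaging of the two application theorems with the forcing corollary.

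\begin{proof}
By the preceding corollary, under \textsf{CH} the forcing $\mathbb{B}(\omega_l)$ forces the existence of a multiple $\mathfrak{d}$-pathway. Working inside $V^{\mathbb{B}(\omega_l)}$, Theorem \ref{Teorema Strong} then yields a strong \textsf{P}-point, while the main theorem of Section \ref{Gruff} yields a Gruff ultrafilter.
\end{proof}
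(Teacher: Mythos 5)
Your proposal is correct and matches the paper's own (implicit) argument exactly: the theorem is stated as an immediate consequence of the corollary that $\mathbb{B}\left(\omega_{l}\right)$ forces a multiple $\mathfrak{d}$-pathway, combined with Theorem \ref{Teorema Strong} and the Gruff ultrafilter theorem of Section \ref{Gruff} applied inside the extension. Nothing further is needed.
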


\qquad\qquad\ \ 

Of course this is also true for Cohen forcing, but it is not new since the
existence of a Gruff ultrafilter and a strong \textsf{P}-point follow from
$\mathfrak{d=c}$ (see \cite{GruffUltrafilters} and \cite{CanjarFilters}).

\section{Open Questions}

We now list some questions we do not know how to solve. The most important one
is the following:

\begin{problem}
Are there \textsf{P}-points (Gruff ultrafilters, strong \textsf{P}-points) in
every model obtained by adding any number of random reals to a model of
\textsf{CH}?
\end{problem}

\qquad\ \ \qquad

It would be enough to provide a positive answer to the following:

\begin{problem}
Does \textsf{CH }imply that $\mathbb{C}\left(  \kappa\right)  $ and
$\mathbb{B}\left(  \kappa\right)  $ have the transformation property for any
cardinal $\kappa$?
\end{problem}

\qquad\ \ \qquad\ \ \ 

In \cite{PFiltersCohenRandomLaver} the first author proved that there will be
an ultrafilter that does not contain a nowhere dense \textsf{P}-subfilter
(equivalently, $\omega^{\ast}$ can not be covered by nowhere dense
\textsf{P}-sets) after adding $\omega_{2}$ Cohen reals to a model of
\textsf{CH}$+$ $\square_{\omega_{1}}$. Building on the ideas developed in this
paper and in \cite{PFiltersCohenRandomLaver}, we were able to construct such
an ultrafilter after adding fewer than $\aleph_{\omega}$ Cohen reals over a
model of \textsf{CH, }this result will appear in another paper.

\begin{acknowledgement}
We would like to thank Michael Hru\v{s}\'{a}k and Juris Stepr\={a}ns for
several helpful discussions related to the topic of this paper. We would also
like to thank the referee for their comments, which greatly improved the paper.
\end{acknowledgement}

\bibliographystyle{plain}
\def\cprime{$'$}

\qquad\qquad\qquad\ \ \ \ \ \ \ \ \ \ \ \ \ \ \ \ \ \ \ \qquad\qquad\qquad\ \ \ 

Alan Dow

Department of Mathematics and Statistics, UNC Charlotte.

adow@charlotte.edu

\qquad\qquad\qquad\ \ \ \ \ \ \ \ \ \ \ \ \qquad\ \ \ \ \ \qquad
\qquad\ \ \ \qquad\qquad\qquad\qquad\ \ \ \ \ \ 

Osvaldo Guzm\'{a}n

Centro de Ciencias Matem\'{a}ticas, UNAM.

oguzman@matmor.unam.mx

\end{document}